\documentclass[a4paper,11pt]{article}
\usepackage[OT1]{fontenc}
\usepackage{amsthm,amsmath,graphicx,latexsym,amssymb,array}
\usepackage{natbib}
\usepackage[colorlinks,citecolor=blue,urlcolor=blue]{hyperref}

\setlength{\textwidth}{6.5in}
\setlength{\oddsidemargin}{-.01in}
\setlength{\topmargin}{-.1in}

\setlength{\textheight}{9.00in}

\numberwithin{equation}{section}
\theoremstyle{plain}
\newtheorem{theorem}{Theorem}[section]

\newtheorem{definition}{Definition}[section]

\newtheorem{remark}{Remark}[section]

\title{Tests for high dimensional data based on means,\\spatial signs and spatial ranks}
\date{}
\author{\vspace{0.3in} Anirvan Chakraborty and Probal Chaudhuri}
\begin{document}

\maketitle
\vspace{-0.6in}
\begin{center}
Theoretical Statistics and Mathematics Unit, \\
Indian Statistical Institute \\
203, B. T. Road, Kolkata - 700108, INDIA. \\
emails: vanchak@gmail.com, probal@isical.ac.in
\end{center}
\vspace{0.15in}
\begin{abstract}
Tests based on sample mean vectors and sample spatial signs have been studied in the recent literature for high dimensional data with the dimension larger than the sample size. For suitable sequences of alternatives, we show that the powers of the mean based tests and the tests based on spatial signs and ranks tend to be same as the data dimension grows to infinity for any sample size, when the coordinate variables satisfy appropriate mixing conditions. Further, their limiting powers do not depend on the heaviness of the tails of the distributions. This is in striking contrast to the asymptotic results obtained in the classical multivariate setup. On the other hand, we show that in the presence of stronger dependence among the coordinate variables, the spatial sign and rank based tests for high dimensional data can be asymptotically more powerful than the mean based tests if in addition to the data dimension, the sample size also grows to infinity. The sizes of some mean based tests for high dimensional data studied in the recent literature are observed to be significantly different from their nominal levels. This is due to the inadequacy of the asymptotic approximations used for the distributions of those test statistics. However, our asymptotic approximations for the tests based on spatial signs and ranks are observed to work well when the tests are applied on a variety of simulated and real datasets.
\vspace{0.1in} \\
\textbf{Keywords}: {ARMA processes, heavy tailed distributions, permutation tests, $\rho$-mixing, randomly scaled $\rho$-mixing, spherical distributions, stationary sequences}
\end{abstract}

\section{Introduction}
\label{1}
\indent For univariate data, nonparametric tests based on signs and ranks are well-known competitors of tests based on sample means like the $t$-test. These nonparametric tests have distribution-free property, and they are asymptotically more efficient than the mean based tests for non-Gaussian distributions having heavy tails. Although various extensions of these nonparametric tests have been proposed for multivariate data (see \cite{PS71}, \cite{Oja10} and \cite{HM11}), they do not have the distribution-free property in general, and they are often implemented using their permutation distributions. However, like their univariate counterparts, they are usually asymptotically more efficient than the mean based Hotelling's $T^{2}$ test for multivariate non-Gaussian distributions with heavy tails (see \cite{CM97}, \cite{MOT97}, \cite{Mard99} and \cite{Oja10}). \\
\indent For high dimensional data, where the data dimension is larger than the sample size, Hotelling's $T^{2}$ test is not applicable due to the singularity of the sample dispersion matrix. Let ${\bf X}_{1},{\bf X}_{2},\ldots,{\bf X}_{m}$ and ${\bf Y}_{1},{\bf Y}_{2},\ldots,{\bf Y}_{n}$ be i.i.d. copies of independent random vectors ${\bf X}$ and ${\bf Y}$ in $\mathbb{R}^{d}$. For testing $H_{0} : E({\bf X}) = E({\bf Y})$ against the alternative $H_{A} : E({\bf X}) \neq E({\bf Y})$ for two high dimensional observations ${\bf X}$ and ${\bf Y}$, \cite{BS96} proposed a test based on $||\overline{{\bf X}} - \overline{{\bf Y}}||^{2}$, where $\overline{{\bf X}}$ and $\overline{{\bf Y}}$ are the sample means of the two samples. \cite{CQ10} proposed a test statistic after removing the terms $\sum_{i=1}^{m} ||{\bf X}_{i}||^{2}$ and $\sum_{j=1}^{n} ||{\bf Y}_{j}||^{2}$ appearing in the expansion of $||\overline{{\bf X}} - \overline{{\bf Y}}||^{2}$, which makes the resulting statistic an unbiased estimator of $||E({\bf X} - {\bf Y})||^{2}$. The one sample and the two sample statistics of \cite{CQ10} based on sample means are
\begin{eqnarray*}
T_{CQ}^{(1)} = \frac{1}{(n)_{2}} \sum_{\substack{i_{1}, i_{2} = 1, \\i_{1} \neq i_{2}}}^{m} {\bf X}_{i_{1}}'{\bf X}_{i_{2}}, \ \ \ \mbox{and} \\
T_{CQ}^{(2)} = \frac{1}{(m)_{2}(n)_{2}} \sum_{\substack{i_{1}, i_{2} = 1, \\i_{1} \neq i_{2}}}^{m} \sum_{\substack{j_{1}, j_{2} = 1, \\j_{1} \neq j_{2}}}^{n} ({\bf X}_{i_{1}} - {\bf Y}_{j_{1}})'({\bf X}_{i_{2}} - {\bf Y}_{j_{2}}),
\end{eqnarray*}
respectively, where $(p)_{q} = p(p-1)\ldots(p-q+1)$ for integers $p \geq 1$ and $1 \leq q < p$. \\
\indent Well known multivariate spatial sign and rank based tests (see \cite{MO95}, \cite{MOT97}, \cite{CM97}, \cite{Mard99} and \cite{Oja10}) also involve inverses of dispersion matrices computed from the sample, which become singular when the data dimension exceeds the sample size. \cite{WPL15} proposed a one sample test of the mean vector based on spatial signs given by
$$T_{S} = \frac{1}{(n)_{2}} \sum_{\substack{i_{1}, i_{2} = 1, \\i_{1} \neq i_{2}}}^{n} S({\bf X}_{i_{1}})'S({\bf X}_{i_{2}}),$$
where $S({\bf x}) = {\bf x}/||{\bf x}||$ denotes the spatial sign of any ${\bf x} \in \mathbb{R}^{d}$. A natural high dimensional version of the one sample spatial signed rank statistic can be defined using the idea of \cite{WPL15}, and it is given by
\begin{eqnarray*}
T_{SR} = \frac{1}{(n)_{4}} \sum_{\substack{i_{1}, i_{2}, i_{3}, i_{4}\\all~distinct}} S({\bf Z}_{i_{1}} + {\bf Z}_{i_{2}})'S({\bf Z}_{i_{3}} + {\bf Z}_{i_{4}}).
\end{eqnarray*}
Similarly, a two sample spatial rank statistic can be defined as
\begin{eqnarray*}
T_{WMW} = \frac{1}{(m)_{2}(n)_{2}} \sum_{\substack{i_{1}, i_{2} = 1, \\i_{1} \neq i_{2}}}^{m} \sum_{\substack{j_{1}, j_{2} = 1, \\j_{1} \neq j_{2}}}^{n} S({\bf Y}_{j_{1}} - {\bf X}_{i_{1}})'S({\bf Y}_{j_{2}} - {\bf X}_{i_{2}}).
\end{eqnarray*}
Note that $T_{S}$, $T_{SR}$ and $T_{WMW}$ are unbiased estimators of $||E\{S({\bf X}_{1})\}||^{2}$, $||E\{S({\bf X}_{1} + {\bf X}_{2})\}||^{2}$ and $||E\{S({\bf X} - {\bf Y})\}||^{2}$, respectively. \\
\indent In this article, we study the behaviours of different tests based on sample means, spatial signs and ranks under various probability models for high dimensional data. In Section \ref{2}, we prove that under appropriate mixing conditions on the coordinate variables and suitable sequences of alternatives, the limiting powers of the spatial rank based test and the mean based tests are the same as the data dimension grows to infinity. This is true for all sample sizes and irrespective of the heaviness of the tails of the underlying distributions. Analogous results hold for the one sample spatial sign and signed rank based tests and the mean based tests, and those are presented in subsection \ref{2.1}. These results are in striking contrast to the asymptotic results obtained in the traditional multivariate setup, where the data dimension is fixed and the sample sizes grow to infinity. In such a setup, the multivariate spatial sign and rank based tests are asymptotically less efficient than Hotelling's $T^{2}$ test for Gaussian distributions, and they are more efficient than the $T^{2}$ test for non-Gaussian distributions with heavy tails (see \cite{MOT97}, \cite{CM97}, \cite{Mard99} and \cite{Oja10}). Recall that for multivariate Gaussian data, the Hotelling's $T^{2}$ test is actually the likelihood ratio test and the most powerful invariant test. In Section \ref{3}, we prove that in the presence of some stronger dependence among the coordinate variables, the limiting powers of the spatial sign and rank based tests can be more than those of their competitors based on sample means if we first let the data dimension and then the sample size to grow to infinity. In Section \ref{4}, we demonstrate the performances of the tests based on sample means and spatial signs and ranks using some real datasets. In Section \ref{5}, we discuss the performances of these tests in comparison with some other mean based tests for high dimensional data available in recent literature. It is found that the sizes of some of the mean based tests are significantly different from their nominal sizes due to the inadequacy of the asymptotic approximations used for the distributions of the corresponding test statistics. The proofs of all the theorems are presented in Appendix -- I.

\section{Asymptotic behaviours of different tests under $\rho$-mixing}
\label{2}
\indent Let ${\cal X} = (X_{1},X_{2},\ldots)$ be an infinite sequence of random variables defined over a probability space $(\Omega,{\cal A},P)$.
\begin{definition}[\cite{KR60}] \label{def1}
A sequence ${\cal X}$ is said to be $\rho$-mixing if $\rho(d) = \sup_{k \geq 1} \sup_{f \in {\cal F}_{k}, g \in {\cal F}_{d+k}} |Corr(f,g)|$ converges to zero as $d \rightarrow \infty$. Here, $\rho(\cdot)$ is called the $\rho$-mixing coefficient of ${\cal X}$, and ${\cal F}_{k}$ denotes the $\sigma$-field generated by measurable square integrable functions of $(X_{1},X_{2},\ldots,X_{k})$ for $k \geq 1$.
\end{definition}
We refer to \cite{LL96} and \cite{Brad05} for further details about $\rho$-mixing sequences. Let ${\bf X}_{1},{\bf X}_{2},\ldots,{\bf X}_{m}$ and ${\bf Y}_{1},{\bf Y}_{2},\ldots,{\bf Y}_{n}$ be i.i.d. copies of independent random vectors ${\bf X}$ and ${\bf Y}$ in $\mathbb{R}^{d}$. We assume the following conditions.
\vspace{0.02in}\\
{\it (C1) ${\bf X} = \mu_{1} + {\bf V}$ and ${\bf Y} = \mu_{2} + {\bf W}$ for some $\mu_{1}, \mu_{2} \in \mathbb{R}^{d}$, where ${\bf V}$ and ${\bf W}$ are vectors formed by the first $d$ coordinates of the zero mean, strictly stationary, and $\rho$-mixing sequences ${\cal V} = (V_{1},V_{2},\ldots)$ and ${\cal W} = (W_{1},W_{2},\ldots)$ satisfying $E(V_{1}^{4}) < \infty$ and $E(W_{1}^{4}) < \infty$.  \\
(C2) The $\rho$-mixing coefficients $\rho_{1}(\cdot)$ and $\rho_{2}(\cdot)$ of ${\cal V}$ and ${\cal W}$ satisfy $\sum_{k=1}^{\infty} \rho_{1}(2^{k}) < \infty$ and $\sum_{k=1}^{\infty} \rho_{2}(2^{k})$ $< \infty$, respectively.}  \\
Denote $\mu = \mu_{2} - \mu_{1}$, $\sigma_{1}^{2} = Var(X_{1}) > 0$, $\sigma_{2}^{2} = Var(Y_{1}) > 0$, $\Sigma_{1} = Disp({\bf X})$, and $\Sigma_{2} = Disp({\bf Y})$, where ${\bf X} = (X_{1},X_{2},\ldots,X_{d})$ and ${\bf Y} = (Y_{1},Y_{2},\ldots,Y_{d})$. \\
{\it (C3) $||\mu||^{2}/d^{1/2+\epsilon} \rightarrow 0$ for some $\epsilon > 0$ and $\mu'(\Sigma_{1} + \Sigma_{2})\mu = o({\mbox{tr}}(\Sigma_{1}^{2} + \Sigma_{2}^{2}))$ as $d \rightarrow \infty$. }
\vspace{0.02in}\\
Examples of $\rho$-mixing sequences include $m$-dependent sequences, stationary ARMA($p$,$q$) processes with white noise innovation process (see \citet[Theorem 1.1.2]{LL96}), and hidden Markov models whose underlying generator sequences are stationary, Gaussian and geometrically ergodic Markov chains (see \citet[Theorem 3.7]{Brad05}). For all of the above models, condition (C2) holds. Condition (C3) is trivially true under the null hypothesis $H_{0} : \mu = {\bf 0}$. Note that when $\Sigma_{1}$ and $\Sigma_{2}$ are identity matrices, the second part of condition (C3) is automatically true if its first part holds. In general, the second part of condition (C3) holds if in addition to the first part, we have $\lambda_{d}^{-1} \sum_{k=1}^{d} \lambda_{k}^{2} = O(d^{1/2 + \epsilon})$ as $d \rightarrow \infty$, where $\lambda_{1} < \lambda_{2} < \ldots < \lambda_{d}$ are the eigenvalues of $\Sigma_{1} + \Sigma_{2}$.\\
\indent \cite{CQ10} worked in a setup, where ${\bf X}$ and ${\bf Y}$ are affine transformations of certain zero mean random vectors, whose coordinates are ``pseudo-independent'' (see (3.2) in p. 811 in that paper). The distributional assumptions in (C1) and (C2) cover many distributions that satisfy the model assumptions stated in (3.1) in \citet[p. 811]{CQ10}, e.g., distributions with independent coordinates, moving average processes and more generally m-dependent sequences as well as autoregressive processes. \cite{FL98} considered the problem of testing equality of two mean curves for functional data, and they modelled the data as a finite dimensional one, where the data dimension is larger than the sample size. A class of probability models considered by them are stationary linear Gaussian processes, many of which satisfy the model assumptions considered above. \cite{SKK13} studied a two sample mean based test based on the sum of squares of the coordinatewise $t$ statistics and studied its properties assuming multivariate Gaussianity of the data, which includes many distributions satisfying Assumptions (C1) and (C2). A closely related test was proposed by \cite{GCBL14}, and they studied its properties under $\alpha$-mixing (see \cite{LL96}) conditions on the data, which is weaker than the $\rho$-mixing setup considered above. However, those authors required the existence of sixteenth order moments. \cite{CLX14} proposed a mean based test for detecting sparse alternatives and studied its properties primarily under the assumption of multivariate Gaussianity of the data. \cite{FZWZ15} proposed a modification of the test in \cite{SKK13} and they worked in a setup similar to that considered by \cite{CQ10}. Thus, as in the case of the latter paper, many probability distributions included in the setup considered by \cite{FZWZ15} satisfy the $\rho$-mixing assumptions described here. \cite{WLWM15} studied the properties of their test under spherical Gaussian distributions, which are special cases of the $\rho$-mixing models considered here.
\begin{theorem}  \label{thm-new000}
Suppose that conditions (C1)--(C3) are satisfied. Define, $\Gamma_{1} = 2{\mbox{tr}}(\Sigma_{1}^{2})/(m)_{2} + 2{\mbox{tr}}(\Sigma_{2}^{2})/(n)_{2} + 4{\mbox{tr}}(\Sigma_{1}\Sigma_{2})/(mn)$. Then, each of $[d(\sigma_{1}^{2} + \sigma_{2}^{2})T_{WMW} - ||\mu||^{2}]/{\Gamma_{1}}^{1/2}$ and $(T_{CQ}^{(2)} - ||\mu||^{2})/{\Gamma_{1}}^{1/2}$ converges weakly to a standard Gaussian variable as $d \rightarrow \infty$ for every fixed $m,n \geq 1$.
\end{theorem}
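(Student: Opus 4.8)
The plan is to prove both weak convergences by establishing a single central limit theorem for the mean based statistic $T_{CQ}^{(2)}$ directly from the $\rho$-mixing structure across the coordinates, and then showing that, as $d\to\infty$, the scaled spatial rank statistic $d(\sigma_1^2+\sigma_2^2)T_{WMW}$ is asymptotically equivalent to $T_{CQ}^{(2)}$, so that the two normalized statistics share the same Gaussian limit. Throughout I would write $\mathbf{V}_i=\mathbf{X}_i-\mu_1$ and $\mathbf{W}_j=\mathbf{Y}_j-\mu_2$, so the $\mathbf{V}_i$'s (resp. $\mathbf{W}_j$'s) are i.i.d. across $i$ (resp. $j$) copies of the $d$-dimensional section of the stationary $\rho$-mixing noise, and set $\tau^2=\sigma_1^2+\sigma_2^2$ and $\mathbf{D}_{ij}=\mathbf{Y}_j-\mathbf{X}_i=\mu+(\mathbf{W}_j-\mathbf{V}_i)$. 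Since $(\mathbf{X}_{i_1}-\mathbf{Y}_{j_1})'(\mathbf{X}_{i_2}-\mathbf{Y}_{j_2})=\mathbf{D}_{i_1j_1}'\mathbf{D}_{i_2j_2}$, one has $T_{CQ}^{(2)}=\{(m)_2(n)_2\}^{-1}\sum\sum \mathbf{D}_{i_1j_1}'\mathbf{D}_{i_2j_2}$, which will make the comparison transparent.

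First I would treat $T_{CQ}^{(2)}$. Expanding the inner products and using $(m)_2^{-1}\sum_{i_1\neq i_2}\mu_1'(\mathbf{V}_{i_1}+\mathbf{V}_{i_2})=2\mu_1'\overline{\mathbf{V}}$ and its analogues, one finds $T_{CQ}^{(2)}-\|\mu\|^2=Q+L$, where $L=2\mu'(\overline{\mathbf{W}}-\overline{\mathbf{V}})$ and $Q=(m)_2^{-1}\sum_{i_1\neq i_2}\mathbf{V}_{i_1}'\mathbf{V}_{i_2}-2\overline{\mathbf{V}}'\overline{\mathbf{W}}+(n)_2^{-1}\sum_{j_1\neq j_2}\mathbf{W}_{j_1}'\mathbf{W}_{j_2}$. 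The linear part is negligible, since $Var(L)=4(m^{-1}\mu'\Sigma_1\mu+n^{-1}\mu'\Sigma_2\mu)=o(\mathrm{tr}(\Sigma_1^2+\Sigma_2^2))=o(\Gamma_1)$ by the second half of (C3), whence $L/\Gamma_1^{1/2}\to 0$ in probability. The decisive observation is that $Q=\sum_{l=1}^{d}a_l$, where $a_l$ depends only on the $l$-th coordinates $(V_{1,l},\dots,V_{m,l},W_{1,l},\dots,W_{n,l})$ of the sample vectors. Because the $m+n$ noise sequences are mutually independent, tensorization of the maximal correlation coefficient shows that the vector-valued sequence of $l$-th coordinates is strictly stationary and $\rho$-mixing with coefficient $\max(\rho_1(\cdot),\rho_2(\cdot))$, and $a_l$ is an instantaneous (hence mixing-coefficient non-increasing) function of it, so $(a_l)$ is itself strictly stationary and $\rho$-mixing. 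A direct second moment computation, in which only fully matched sample index pairs survive because distinct indices give independent mean-zero vectors, yields $Var(Q)=\Gamma_1$, and $\Gamma_1\geq 2d\sigma_1^4/(m)_2\to\infty$. Condition (C2) gives $\sum_k \max(\rho_1(2^k),\rho_2(2^k))<\infty$, so Ibragimov's central limit theorem for strictly stationary $\rho$-mixing sequences applies and gives $Q/\Gamma_1^{1/2}\to N(0,1)$; Slutsky's theorem then yields the statement for $(T_{CQ}^{(2)}-\|\mu\|^2)/\Gamma_1^{1/2}$.

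Next I would reduce $T_{WMW}$ to $T_{CQ}^{(2)}$. With $R_{ij}=\|\mathbf{D}_{ij}\|$ and $\Delta_{ij}=R_{ij}^2-d\tau^2=\|\mu\|^2+2\mu'(\mathbf{W}_j-\mathbf{V}_i)+\sum_{l}\{(W_{j,l}-V_{i,l})^2-\tau^2\}$, a law-of-large-numbers estimate for the $\rho$-mixing sum in the last term (variance $O(d)$) together with (C3) gives $\Delta_{ij}=o_P(d)$, so $S(\mathbf{D}_{ij})=\mathbf{D}_{ij}(d\tau^2)^{-1/2}g_{ij}$ with $g_{ij}=(1+\Delta_{ij}/(d\tau^2))^{-1/2}$. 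Hence $d\tau^2\,S(\mathbf{D}_{i_1j_1})'S(\mathbf{D}_{i_2j_2})=g_{i_1j_1}g_{i_2j_2}\,\mathbf{D}_{i_1j_1}'\mathbf{D}_{i_2j_2}$, and since $g_{ij}\to 1$ the leading term of $d\tau^2 T_{WMW}$ is exactly $T_{CQ}^{(2)}$. Expanding $g_{ij}-1=-\tfrac12\Delta_{ij}/(d\tau^2)+O((\Delta_{ij}/(d\tau^2))^2)$, I would write $d\tau^2 T_{WMW}-T_{CQ}^{(2)}$ as a deterministic bias plus a stochastic remainder. The bias reflects that $d\tau^2 E(T_{WMW})=d\tau^2\|E\{S(\mathbf{X}-\mathbf{Y})\}\|^2=\|\mu\|^2-(d\tau^2)^{-1}\{2\mu'(\Sigma_1+\Sigma_2)\mu+\|\mu\|^4\}+\cdots$, whose correction term is of smaller order than $\Gamma_1^{1/2}$ under (C3). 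Showing in addition that the stochastic remainder is $o_P(\Gamma_1^{1/2})$ gives $d\tau^2 T_{WMW}-\|\mu\|^2=(T_{CQ}^{(2)}-\|\mu\|^2)+o_P(\Gamma_1^{1/2})$, and a final application of Slutsky's theorem transfers the Gaussian limit from $T_{CQ}^{(2)}$ to $d\tau^2 T_{WMW}$.

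The hard part will be this last reduction, namely controlling the averaged remainder $\{(m)_2(n)_2\}^{-1}\sum\sum (g_{i_1j_1}g_{i_2j_2}-1)\mathbf{D}_{i_1j_1}'\mathbf{D}_{i_2j_2}$ and showing it is $o_P(\Gamma_1^{1/2})=o_P(\sqrt d)$. The difficulty is that $\Delta_{ij}$ is only of order $O_P(\sqrt d)$ (apart from the $\mu$-dependent pieces governed by (C3)) while each $\mathbf{D}_{i_1j_1}'\mathbf{D}_{i_2j_2}$ is likewise $O_P(\sqrt d)$, and the two share sample indices, so a crude bound is not small enough. The argument will require careful moment computations that exploit the $\rho$-mixing decay to control the coordinate sums, the independence across distinct sample indices to annihilate cross terms after averaging, and both halves of (C3) to guarantee that the $\mu$-dependent parts of $\Delta_{ij}$ and of $\mathbf{D}_{i_1j_1}'\mathbf{D}_{i_2j_2}$ enter only at order $o(\sqrt d)$. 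Securing uniform enough control of the higher-order terms $O((\Delta_{ij}/(d\tau^2))^2)$ in the expansion of $g_{ij}$ over all $O(m^2n^2)$ index pairs, so that they may be discarded, is the principal technical obstacle.
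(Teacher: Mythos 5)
Your analysis of $T_{CQ}^{(2)}$ is correct and is essentially the paper's own argument: the paper uses the identical decomposition (its $T_{1}-T_{2}$ is your $Q+L$), the same variance computation giving $Var(Q)=\Gamma_{1}$, negligibility of the linear part via the second half of (C3) and Chebyshev, and the CLT for strictly stationary $\rho$-mixing sequences applied to the coordinatewise sums (the paper invokes Theorem 4.0.1 of Lin and Lu, and your ``tensorization'' step is exactly its appeal to Bradley's Theorem 5.2(b)). Your decomposition $d\tau^{2}T_{WMW}=T_{CQ}^{(2)}+\mbox{remainder}$, with the factors $g_{i_1j_1}g_{i_2j_2}-1$ coming from the norms, is also exactly the paper's equation for $T_{WMW}$ and its term $T_{WMW}^{(2)}$.

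The genuine gap is that you never prove the remainder is $o_{P}(\Gamma_{1}^{1/2})$ --- you defer it as ``the principal technical obstacle'' --- and your diagnosis of that obstacle is mistaken, so your roadmap for closing it (delicate joint moment computations exploiting cancellation across shared sample indices, plus uniform control over all index pairs) points in the wrong direction. Since $m,n$ are \emph{fixed} in this theorem, the double sum has a bounded number of terms, so it suffices to kill each term separately; no uniformity and no cancellation are needed, and your claim that ``a crude bound is not small enough'' is false. Indeed, write each term as $(g_{i_1j_1}g_{i_2j_2}-1)\{(\mathbf{D}_{i_1j_1}'\mathbf{D}_{i_2j_2}-||\mu||^{2})+||\mu||^{2}\}$. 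For the first piece, the one-pair version of the CLT you already established shows $(\mathbf{D}_{i_1j_1}'\mathbf{D}_{i_2j_2}-||\mu||^{2})/\Gamma_{1}^{1/2}=O_{P}(1)$ (its variance is of order $\mbox{tr}[(\Sigma_{1}+\Sigma_{2})^{2}]$, comparable to $(N)_{2}\Gamma_{1}$ with $N=\max(m,n)$), while $g_{i_1j_1}g_{i_2j_2}-1=o_{P}(1)$, so the piece is $o_{P}(\Gamma_{1}^{1/2})$ outright. For the second piece, the $\rho$-mixing strong law (Theorem 8.2.2 of Lin and Lu, using the fourth moments in (C1)) gives the almost sure rate $||\mathbf{D}_{ij}||^{2}/d-\tau^{2}=o(d^{-1/2+\epsilon'})$ for every $\epsilon'>0$, hence $g_{i_1j_1}g_{i_2j_2}-1=o_{P}(d^{-1/2+\epsilon'})$, whereas stationarity and Cauchy--Schwarz give $\Gamma_{1}\geq 2\,\mbox{tr}[(\Sigma_{1}+\Sigma_{2})^{2}]/(N)_{2}\geq 2d(\sigma_{1}^{2}+\sigma_{2}^{2})^{2}/(N)_{2}$, so the first half of (C3) yields $||\mu||^{2}/\Gamma_{1}^{1/2}=o(d^{\epsilon})$; choosing $\epsilon,\epsilon'$ small makes the product vanish. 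This is precisely the paper's split of $T_{WMW}^{(2)}$ into $T_{WMW}^{(3)}+T_{WMW}^{(4)}$. The same remark disposes of your worry about the quadratic terms in the expansion of $g_{ij}$ (almost sure convergence of $\Delta_{ij}/d$ handles each of the finitely many pairs), and it makes your heuristic bias expansion of $d\tau^{2}E(T_{WMW})$ unnecessary, since the termwise bound subsumes any bias. As written, however, the proposal leaves the decisive step unestablished.
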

\indent When the null hypothesis $H_{0} : \mu = {\bf 0}$ is true, the above theorem yields the asymptotic null distributions of $T_{WMW}$ and $T_{CQ}^{(2)}$ as $d \rightarrow \infty$. Let us observe that the asymptotic distribution of $T_{CQ}^{(2)}$ obtained in the above theorem as $d \rightarrow \infty$ is the same as that obtained by \cite{CQ10} in their Theorem 1 when both $d, n \rightarrow \infty$. These authors used an assumption similar to that in the second part of condition (C3) for deriving the asymptotic distribution of their test statistic, when both $d$ and $n$ are large (see (3.4) in p. 812 in \cite{CQ10}). \\
\indent When the alternative hypothesis $H_{A} : \mu \neq {\bf 0}$ is true, the next theorem compares the asymptotic powers of the tests based on $T_{WMW}$ and $T_{CQ}^{(2)}$ for high dimensional data. Let $\beta_{T_{WMW}}(\mu)$ and $\beta_{T_{CQ}^{(2)}}(\mu)$ be the powers of these two tests at a given level of significance.
\begin{theorem}  \label{thm-new0}
Suppose that conditions (C1)--(C3) are satisfied, and assume $\lim_{d \rightarrow \infty} ||\mu||^{2}/\Gamma_{1}^{1/2} = c$ for some $c \in [0,\infty]$. Then, $\lim_{d \rightarrow \infty} \beta_{T_{WMW}}(\mu) = \lim_{d \rightarrow \infty} \beta_{T_{CQ}^{(2)}}(\mu) = \beta$ for every fixed $m,n \geq 1$, where $\beta = \alpha$, $\beta = 1$, or $\beta \in (\alpha,1)$ according as $c = 0$, $c = \infty$, or $c \in (0,\infty)$, respectively. Here, $\alpha$ is the level of significance of the test.
\end{theorem}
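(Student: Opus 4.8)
The plan is to deduce both limiting powers from the single weak limit furnished by Theorem \ref{thm-new000}, so that Theorem \ref{thm-new0} becomes essentially a translation of that convergence into statements about rejection probabilities. The key structural observation I would exploit is that, by condition (C1), the dispersion matrices $\Sigma_1 = Disp({\bf X})$ and $\Sigma_2 = Disp({\bf Y})$, and therefore the normalizing quantity $\Gamma_1$, depend only on the underlying zero mean stationary sequences ${\cal V}$ and ${\cal W}$ and not on the mean shift $\mu$. Hence the standardization that calibrates each test to asymptotic level $\alpha$ under $H_0$ is exactly the normalization appearing in Theorem \ref{thm-new000} for a general $\mu$. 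Both tests reject $H_0$ when their standardized statistics $[d(\sigma_1^2+\sigma_2^2)T_{WMW}]/\Gamma_1^{1/2}$ and $T_{CQ}^{(2)}/\Gamma_1^{1/2}$ exceed $z_\alpha$, where $\Phi$ denotes the standard Gaussian distribution function and $z_\alpha$ is its upper $\alpha$-quantile (so $\Phi(-z_\alpha)=\alpha$); putting $\mu={\bf 0}$ in Theorem \ref{thm-new000} confirms that this rule indeed has asymptotic size $\alpha$.

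First I would rewrite the power of the $T_{CQ}^{(2)}$-based test by recentering at $||\mu||^2$,
\[
\beta_{T_{CQ}^{(2)}}(\mu) = P\!\left( \frac{T_{CQ}^{(2)}}{\Gamma_1^{1/2}} > z_\alpha \right) = P\!\left( \frac{T_{CQ}^{(2)} - ||\mu||^2}{\Gamma_1^{1/2}} > z_\alpha - \frac{||\mu||^2}{\Gamma_1^{1/2}} \right),
\]
and similarly for $\beta_{T_{WMW}}(\mu)$ with the centered statistic $[d(\sigma_1^2+\sigma_2^2)T_{WMW} - ||\mu||^2]/\Gamma_1^{1/2}$ in place of the numerator. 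By Theorem \ref{thm-new000} each such centered and scaled statistic converges weakly to $N(0,1)$ as $d\to\infty$, while by hypothesis the deterministic shift $||\mu||^2/\Gamma_1^{1/2}$ tends to $c$.

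For $c\in[0,\infty)$ I would combine these two facts through a Slutsky-type argument: the threshold $z_\alpha - ||\mu||^2/\Gamma_1^{1/2}$ converges to the constant $z_\alpha - c$, and since $\Phi$ is continuous, both probabilities converge to $1-\Phi(z_\alpha - c) = \Phi(c - z_\alpha)$. When $c=0$ this gives $\Phi(-z_\alpha)=\alpha$, and when $c\in(0,\infty)$ it lies strictly between $\Phi(-z_\alpha)=\alpha$ and $1$ because $\Phi$ is strictly increasing. As the same chain of equalities governs both test statistics, differing only in which limit from Theorem \ref{thm-new000} is invoked, the two limiting powers coincide.

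The main point requiring care is the boundary case $c=\infty$, where the threshold diverges to $-\infty$ and one cannot substitute directly into $\Phi$. Here I would use a sandwiching device: for any fixed $M>0$ we have $||\mu||^2/\Gamma_1^{1/2} > M$ eventually, so each power is bounded below by the probability that the corresponding centered statistic exceeds $z_\alpha - M$, and by Theorem \ref{thm-new000} this lower bound converges to $\Phi(M - z_\alpha)$; letting $M\to\infty$ forces both limiting powers to $1$. The only genuinely delicate step, I expect, is the structural verification that $\Gamma_1$ is unaffected by $\mu$, so that the cutoff $z_\alpha$ calibrated under $H_0$ remains the correct one under $H_A$ (and, should a ratio-consistent estimator of $\Gamma_1$ be used in practice, a further application of Slutsky's theorem handles the substitution); beyond this, the result is a direct consequence of the weak convergence already established, yielding $\beta = \Phi(c-z_\alpha)$ and the stated trichotomy in every case.
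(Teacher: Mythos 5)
Your proposal is correct and follows essentially the same route as the paper: both express each power as $P\{(\text{centered statistic})/\Gamma_1^{1/2} > \zeta_\alpha - ||\mu||^2/\Gamma_1^{1/2}\}$ and translate the weak convergence of Theorem \ref{thm-new000} into the common limit $\Phi(-\zeta_\alpha + c)$. Your explicit sandwiching argument for the case $c=\infty$ is a welcome addition, since the paper only remarks that this case ``follows easily.''
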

\indent The above theorem implies that the asymptotic powers of the mean based and the spatial rank based tests are the same as $d \rightarrow \infty$ for each fixed $m,n \geq 1$. If $\Sigma_{1}$ and $\Sigma_{2}$ equal the $d \times d$ identity matrix, and $d$ is large, we get different powers of the tests based on $T_{WMW}$ and $T_{CQ}^{(2)}$ according as $||\mu||/d^{1/4}$ converges to zero, infinity or some $c \in (0,\infty)$.

\subsection{Empirical study using some $\rho$-mixing models}
\label{2.1}
\indent For implementing the tests based on $T_{WMW}$ and $T_{CQ}^{(2)}$ under the $\rho$-mixing setup, we can use their limiting null distributions obtained from Theorem \ref{thm-new000} after plugging-in the following unbiased estimators of the parameters involved.
\begin{eqnarray*}
&& \widehat{\Gamma_{1}} = \frac{2}{(m)_{2}}\widehat{{\rm{tr}}(\Sigma_{1}^{2})} + \frac{2}{(n)_{2}}\widehat{{\rm{tr}}(\Sigma_{2}^{2})} + \frac{4}{mn}\widehat{{\rm{tr}}(\Sigma_{1}\Sigma_{2})}, \\
\mbox{where} && \widehat{{\rm{tr}}(\Sigma_{1}^{2})} = \frac{1}{4(m)_{4}} \sum_{\substack{i_{1}, i_{2}, i_{3}, i_{4} \\ all~distinct}} [({\bf X}_{i_{1}} - {\bf X}_{i_{2}})'({\bf X}_{i_{3}} - {\bf X}_{i_{4}})]^{2}, \\
&& \widehat{{\rm{tr}}(\Sigma_{2}^{2})} = \frac{1}{4(n)_{4}} \sum_{\substack{j_{1}, j_{2}, j_{3}, j_{4} \\ all~distinct}} [({\bf Y}_{j_{1}} - {\bf Y}_{j_{2}})'({\bf Y}_{j_{3}} - {\bf Y}_{j_{4}})]^{2}, \ \ \ \mbox{and} \\
&& \widehat{{\rm{tr}}(\Sigma_{1}\Sigma_{2})} = \frac{1}{4(m)_{2}(n)_{2}} \sum_{i_{1} \neq i_{2}}
\sum_{j_{1} \neq j_{2}} [({\bf X}_{i_{1}} - {\bf X}_{i_{2}})'({\bf Y}_{j_{1}} - {\bf
Y}_{j_{2}})]^{2}, \\
\end{eqnarray*}
Also, $\widehat{\sigma_{1}^{2}} = [d(m-1)]^{-1} \sum_{k=1}^{d} \sum_{i=1}^{m} (X_{ik} - \overline{X}_{k})^{2}$, where $\overline{X}_{k} = d^{-1} \sum_{i=1}^{m} X_{ik}$ with ${\bf X}_{i} = (X_{i1},X_{i2},\ldots,$ $X_{id})$, $1 \leq i \leq m$, and $\widehat{\sigma_{2}^{2}} = [d(n-1)]^{-1} \sum_{k=1}^{d} \sum_{j=1}^{n} (Y_{jk} - \overline{Y}_{k})^{2}$, where $\overline{Y}_{k} = d^{-1} \sum_{j=1}^{n} Y_{jk}$ with ${\bf Y}_{j} = (Y_{j1},Y_{j2},\ldots,Y_{jd})$, $1 \leq j \leq n$. Note that $\widehat{\Gamma}_{1}$ is invariant under location transformations unlike the estimator proposed by \citet[p. 815]{CQ10}. Moreover, for all simulated datasets and real datasets considered later, the empirical sizes and powers of the test based on $T_{CQ}^{(2)}$ implemented as above are similar to those of the original two sample test in \cite{CQ10}. \\
\indent To compare the performances of the tests based on $T_{WMW}$ and $T_{CQ}^{(2)}$, we have considered the $AR(1)$ models with correlation $0.7$ having Gaussian and $t(5)$ innovations. The sample sizes are $m = n = 20$, and $\mu = (c,0,0,\ldots,0)$ with $c = 1.5, 3, 4.5, 6, 7.5$ for $d = 100, 200, 400, 800, 1600$, respectively. The sizes and the powers of the tests based on $T_{WMW}$ and $T_{CQ}^{(2)}$ are averaged over $1000$ Monte Carlo simulations. We found that the sizes of the tests are not significantly different from the nominal $5\%$ level for both the models. It is seen from Figure \ref{Fig1} that the powers of these two tests are similar for all data dimensions considered under both the models. The power curves are so close that they are overlaid on each other.
\begin{figure}[ht!]
\begin{center}
\includegraphics[width=4.5in,height=1.5in]{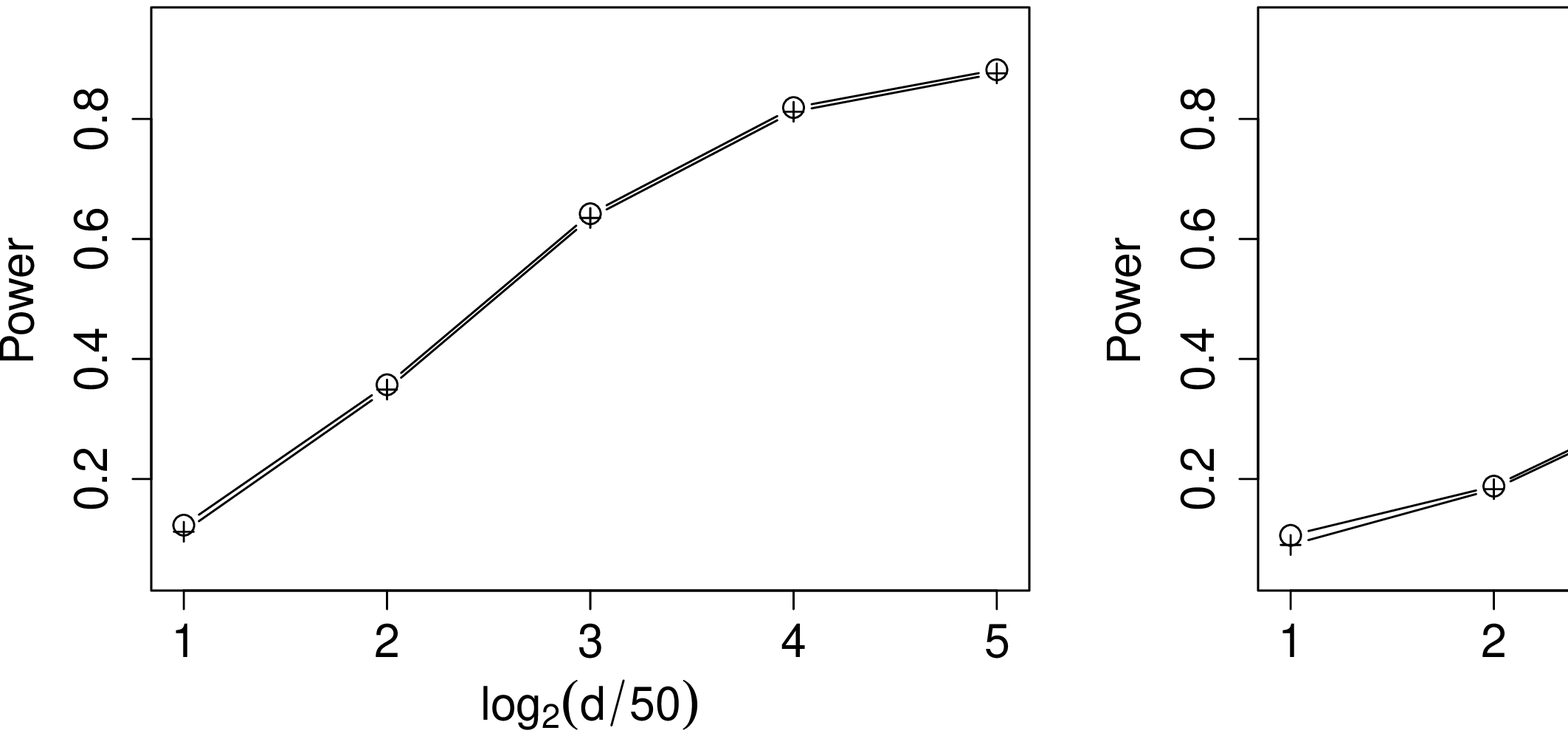}
\end{center}
\vspace{-0.15in}
\caption{Powers of the tests at nominal $5\%$ level based on $T_{WMW}$ (- + - curves) and $T_{CQ}^{(2)}$ (- $\circ$ - curves) for the $AR(1)$ model with Gaussian innovation (left panel) and $t(5)$ innovation (right panel). The two power curves are overlaid on each other in both the plots. \label{Fig1}}
\end{figure}

\subsection{Asymptotic behaviours of one sample tests under $\rho$-mixing}
\label{2.2}
\indent Let ${\bf X}_{1}$, ${\bf X}_{2}, \ldots, {\bf X}_{n}$ be i.i.d. copies of a random vector ${\bf X} \in \mathbb{R}^{d}$. The following theorem gives the asymptotic distributions of $T_{S}$, $T_{SR}$ and $T_{CQ}^{(1)}$ and compares their asymptotic powers, when the data dimension is large. Denote $\beta_{T_{S}}(\mu)$, $\beta_{T_{SR}}(\mu)$ and $\beta_{T_{CQ}^{(1)}}(\mu)$ to be the powers of the tests based on $T_{S}$, $T_{SR}$ and $T_{CQ}^{(1)}$ at a given level of significance, when the alternative hypothesis $H_{A} : \mu = {\bf 0}$ is true. Let us assume the following condition, which is the one sample version of condition (C3).
\vspace{0.02in}\\
{\it (C4) $||\mu||^{2}/d^{1/2+\epsilon} \rightarrow 0$ for some $\epsilon > 0$ and $\mu'\Sigma\mu = o({\mbox{tr}}(\Sigma^{2}))$ as $d \rightarrow \infty$, where $\Sigma = Disp({\bf X})$. }
\begin{theorem} \label{thm-new2}
Let ${\bf X} = \mu + {\bf V}$, where ${\bf V}$ is the vector formed by the first $d$ coordinates of the infinite sequence ${\cal V}$ satisfying conditions (C1) and (C2), and $\mu$ satisfies condition (C4). Define $\Gamma_{2} = 2{\mbox{tr}}(\Sigma^{2})/(n)_{2}$, and $\sigma^{2} = Var(X_{1})$, where ${\bf X} = (X_{1},X_{2},\ldots,X_{d})$. \\
(a) Each of $(d\sigma^{2}T_{S} - ||\mu||^{2})/\Gamma_{2}^{1/2}$, $(d\sigma^{2}T_{SR} - 2||\mu||^{2})/(2\Gamma_{2}^{1/2})$ and $(T_{CQ}^{(1)} - ||\mu||^{2})/\Gamma_{2}^{1/2}$ converges weakly to a standard Gaussian variable as $d \rightarrow \infty$ for every fixed $m,n \geq 1$. \\
(b) Assume $\lim_{d \rightarrow \infty} ||\mu||^{2}/\Gamma_{2}^{1/2} = c$ for some $c \in [0,\infty]$. Then, $\lim_{d \rightarrow \infty} \beta_{T_{S}}(\mu) = \lim_{d \rightarrow \infty} \beta_{T_{SR}}(\mu) = \lim_{d \rightarrow \infty} \beta_{T_{CQ}^{(1)}}(\mu) = \beta$ for every fixed $m,n \geq 1$, where $\beta = \alpha$, $\beta = 1$ or $\beta \in (\alpha,1)$ according as $c = 0$, $c = \infty$, or $c \in (0,\infty)$, respectively.
\end{theorem}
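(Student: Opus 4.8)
The plan is to reduce all three statistics, after the centering and scaling displayed in the theorem, to a single quadratic form in the noise vectors and then to prove one central limit theorem for that form. Writing ${\bf X}_{i} = \mu + {\bf V}_{i}$ and setting
\[
W_{d} = \frac{1}{(n)_{2}} \sum_{\substack{i_{1}, i_{2} = 1,\\ i_{1} \neq i_{2}}}^{n} {\bf V}_{i_{1}}' {\bf V}_{i_{2}},
\]
I would show that $(T_{CQ}^{(1)} - ||\mu||^{2})$ and $(d\sigma^{2}T_{S} - ||\mu||^{2})$ both equal $W_{d} + o_{P}(\Gamma_{2}^{1/2})$, while $(d\sigma^{2}T_{SR} - 2||\mu||^{2})$ equals $2W_{d} + o_{P}(\Gamma_{2}^{1/2})$. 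Since $\mbox{Var}(W_{d}) = \Gamma_{2}$, the three standardized statistics in part (a) then all reduce to $W_{d}/\Gamma_{2}^{1/2}$, so it suffices to prove that $W_{d}/\Gamma_{2}^{1/2}$ is asymptotically standard Gaussian; part (b) will follow at once because all three tests share the same limit.

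For $T_{CQ}^{(1)}$ the reduction is exact: expanding the inner products gives $T_{CQ}^{(1)} - ||\mu||^{2} = W_{d} + (2/n)\sum_{i=1}^{n}\mu'{\bf V}_{i}$, and the linear remainder has variance $(4/n)\mu'\Sigma\mu$, which is $o(\Gamma_{2})$ by the second part of condition (C4). For the central limit theorem, I would write $\sum_{i_{1}\neq i_{2}}{\bf V}_{i_{1}}'{\bf V}_{i_{2}} = \sum_{k=1}^{d} U_{k}$ with $U_{k} = (\sum_{i=1}^{n} V_{ik})^{2} - \sum_{i=1}^{n} V_{ik}^{2}$, where $V_{ik}$ is the $k$-th coordinate of ${\bf V}_{i}$. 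The sequence $\{U_{k}\}_{k \geq 1}$ is strictly stationary and, being a coordinatewise function of $n$ independent copies of the $\rho$-mixing sequence ${\cal V}$, is itself $\rho$-mixing with a coefficient $\rho_{U}(\cdot)$ dominated by $\rho_{1}(\cdot)$; condition (C2) then supplies $\sum_{k}\rho_{U}(2^{k}) < \infty$, and condition (C1) gives $E(U_{k}^{2}) < \infty$. A central limit theorem for stationary $\rho$-mixing sequences now yields $W_{d}/\Gamma_{2}^{1/2} \rightarrow N(0,1)$, the normalization being exactly $\mbox{Var}(W_{d}) = 2\mbox{tr}(\Sigma^{2})/(n)_{2} = \Gamma_{2}$. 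This mirrors the corresponding step in the proof of Theorem \ref{thm-new000}.

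The substantive new work is the reduction of the sign-based statistics. Here I would first establish the radial concentration: by the ergodic theorem applied to the stationary sequence $\{V_{ik}^{2}\}_{k}$ one has $||{\bf V}_{i}||^{2}/d \rightarrow \sigma^{2}$, and since the hypotheses in (C4) (combined with the $O(d)$ bound on $\mbox{tr}(\Sigma^{2})$ coming from the $\rho$-mixing covariance structure) guarantee $||\mu||^{2}/d \rightarrow 0$, one gets $||{\bf X}_{i}||^{2}/(d\sigma^{2}) \rightarrow 1$ and $||{\bf X}_{i} + {\bf X}_{j}||^{2}/(2d\sigma^{2}) \rightarrow 1$ in probability. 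Substituting $S({\bf x}) = {\bf x}/||{\bf x}||$ and expanding the reciprocal radial factors turns $d\sigma^{2}T_{S}$ into $T_{CQ}^{(1)}$ plus a remainder, and $d\sigma^{2}T_{SR}$ into a scaled mean-type U-statistic; the combinatorial identity that collapses the degree-four kernel $({\bf X}_{i_{1}} + {\bf X}_{i_{2}})'({\bf X}_{i_{3}} + {\bf X}_{i_{4}})$ into pairwise inner products produces exactly the factor $2$ appearing in the centering $2||\mu||^{2}$ and in the scaling $2\Gamma_{2}^{1/2}$.

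The hard part will be showing that these remainders are $o_{P}(\Gamma_{2}^{1/2})$. Because $\Gamma_{2}^{1/2}$ is only of order $d^{1/2}/n$, it is not enough to know that $||{\bf X}_{i}||^{2}/(d\sigma^{2}) \rightarrow 1$; I will need a quantitative rate for the fluctuations of $||{\bf V}_{i}||^{2}$ about $d\sigma^{2}$ (obtained from the fourth-moment assumption in (C1) and the $\rho$-mixing covariance bounds implied by (C2)) together with a careful accounting of the correlation between the numerators ${\bf X}_{i_{1}}'{\bf X}_{i_{2}}$ and the random denominators $||{\bf X}_{i_{1}}||\,||{\bf X}_{i_{2}}||$, so that the radial deviations enter the U-statistics at strictly smaller order than $\Gamma_{2}^{1/2}$. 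The degree-four structure of $T_{SR}$ makes this bookkeeping the most delicate point, just as $T_{WMW}$ was in Theorem \ref{thm-new000}. Once part (a) is in hand, part (b) is routine: under $||\mu||^{2}/\Gamma_{2}^{1/2} \rightarrow c$ each standardized statistic converges to $N(0,1)$ with noncentrality shift $c$, so every rejection probability tends to $1 - \Phi(z_{\alpha} - c)$, which equals $\alpha$ for $c = 0$, tends to $1$ for $c = \infty$, and lies in $(\alpha,1)$ for $c \in (0,\infty)$, identically for all three tests.
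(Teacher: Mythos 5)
Your proposal is correct and is essentially the paper's own argument: the paper likewise collapses the degree-four kernel of $T_{SR}$ to $2T_{CQ}^{(1)}/(d\sigma^{2})$ plus a ratio remainder (its decomposition \eqref{eq5.1}), proves the central limit theorem for the common quadratic form coordinatewise via Theorem 4.0.1 of \cite{LL96} applied to the stationary $\rho$-mixing sequence of coordinate sums, disposes of the $\mu$-linear term by condition (C4) and Chebyshev's inequality, and deduces part (b) exactly as you do. The only place you anticipate more work than is needed is the remainder: since $n$ is fixed there are finitely many summands, so after centering the numerator at $||\mu||^{2}$ each summand scaled by $\Gamma_{2}^{1/2}$ is $O_{P}(1)\times o_{P}(1)$ (no numerator--denominator correlation bookkeeping is required), and the leftover $||\mu||^{2}\{\mbox{ratio}-1\}$ piece is controlled by the almost sure rate $o(d^{-1/2+\epsilon})$ for the deviation of $||{\bf X}_{i_{1}}+{\bf X}_{i_{2}}||^{2}/d$ from $2\sigma^{2}$ supplied by Theorem 8.2.2 of \cite{LL96} --- precisely the quantitative rate you call for.
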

\indent We get the limiting null distributions of $T_{S}$, $T_{SR}$ and $T_{CQ}^{(1)}$ when $\mu = {\bf 0}$ in the above theorem. When both the data dimension and the sample size grow to infinity, \cite{WPL15} proved that the test based on $T_{S}$ is asymptotically as powerful as the test based on $T_{CQ}^{(1)}$ for spherical Gaussian distributions, which is a distribution included in our $\rho$-mixing model. The equality of the asymptotic powers of the tests based on $T_{S}$ and $T_{CQ}^{(1)}$ stated in part (b) of our Theorem \ref{thm-new2} holds for any sample size and for many non-spherical distributions.
\begin{remark}
In both the one and the two sample problems, when our $\rho$-mixing model for the data holds, the equality of the limiting powers of the tests based on sample means and the tests based on spatial signs and ranks, when the data dimension is large. This is true for any sample size and irrespective of whether the coordinate variables have Gaussian or some other heavy tailed distributions.
\end{remark}

\section{Asymptotic behaviours of different tests under stronger dependence}
\label{3}
\indent We now consider another class of probability models for high dimensional data, where there is stronger dependence among the coordinate variables than what we have considered in the previous section.
\begin{definition} \label{def2}
Consider an infinite sequence ${\cal X}$ defined over a probability space $(\Omega,{\cal A},P)$. We say that ${\cal X}$ is a randomly scaled $\rho$-mixing sequence (RSRM sequence, say) if there exist a zero mean $\rho$-mixing sequence ${\cal R}$ and a positive non-degenerate random variable $U$ defined on $(\Omega,{\cal A},P)$ such that ${\cal X} = {\cal R}/U$.
\end{definition}
\indent The RSRM property is satisfied by many important probability models for high dimensional data. For instance,  the infinite sequence of random variables associated with the multivariate spherical $t$ distribution has this property. In fact, by Theorem 1.31 in \cite{Kall05}, it follows that any rotatable sequence ${\cal X}$, i.e., a sequence for which all finite dimensional marginals are spherically symmetric, can be viewed as a RSRM sequence. Here,  ${\cal R}$ can be taken as a sequence of i.i.d. standard Gaussian variables and $U$ as a non-negative random variable independent of ${\cal R}$. More generally, if every finite dimensional marginal of a sequence ${\cal X}$ is elliptically symmetric, then ${\cal X} = {\cal R}/U$  with probability one, where ${\cal R}$ is a sequence of zero mean Gaussian variables, and $U$ is a non-negative random variable independent of ${\cal R}$. In this case, ${\cal X}$ has the RSRM property if the Gaussian sequence ${\cal R}$ is a $\rho$-mixing sequence. Let us mention here that \cite{WPL15} primarily worked under the setup of elliptically symmetric models, and from the above discussion it follows that this class includes many distributions that have the RSRM property. \cite{CLX14} also considered different classes of non-Gaussian models, and many of them have the RSRM property. \\
\indent For deriving the asymptotic distributions of $T_{WMW}$ and $T_{CQ}^{(2)}$ under the RSRM model, we assume the following.
\vspace{0.02in}\\
{\it (C5) ${\bf X} = \mu_{1} + \widetilde{{\bf V}}$ and ${\bf Y} = \mu_{2} + \widetilde{{\bf W}}$ for some $\mu_{1}, \mu_{2} \in \mathbb{R}^{d}$, where $\widetilde{{\bf V}}$ and $\widetilde{{\bf W}}$ are vectors formed by the first $d$ coordinates of RSRM sequences $\widetilde{{\cal V}}$ and $\widetilde{{\cal W}}$. Let $\widetilde{{\bf V}} = {\bf V}/P$ and $\widetilde{{\bf W}} = {\bf W}/Q$, where ${\cal V}$ and ${\cal W}$ are independent $\rho$-mixing sequences satisfying (C1) and (C2), and $P$ and $Q$ are independent positive random variables. } \vspace{0.02in}\\
As earlier, let ${\bf X}_{1},{\bf X}_{2},\ldots,{\bf X}_{m}$ and ${\bf Y}_{1},{\bf Y}_{2},\ldots,{\bf Y}_{n}$ be i.i.d. copies of independent random vectors ${\bf X}$ and ${\bf Y}$ in $\mathbb{R}^{d}$. Then, we can write ${\bf X}_{i} = \mu_{1} + {\bf V}_{i}/P_{i}$, $1 \leq i \leq m$, and ${\bf Y}_{j} = \mu_{2} + {\bf W}_{j}/Q_{j}$, $1 \leq j \leq n$.
\begin{theorem} \label{thm-new10}
Assume that (C5) holds, and $\mu = \mu_{2} - \mu_{1}$ satisfies condition (C3) with $\Sigma_{1}$ and $\Sigma_{2}$ in that condition replaced by $Disp({\bf V})$ and $Disp({\bf W})$, respectively. \\
(a) There exist random variables $S_{1}$, $S_{2}$ and $S_{3}$ that are functions of the $P_{i}$'s and the $Q_{j}$'s such that each of $(dT_{WMW} - ||\mu||^{2}S_{1})/S_{2}^{1/2}$ and $(T_{CQ}^{(2)} - ||\mu||^{2})/S_{3}^{1/2}$ converges weakly to a standard Gaussian variable as $d \rightarrow \infty$ for every $m,n \geq 1$. Consequently, for every fixed $m,n \geq 1$, the distributions of $T_{WMW}$ and $T_{CQ}^{(2)}$ can be approximated by location and scale mixtures of Gaussian distributions, when the data dimension is large. \\
(b) Assume further that all of $E(P), E(Q), E(P^{-2})$ and $E(Q^{-2})$ are finite, and $||\mu||^{2}/d^{1/2}$ tends to a finite non-negative limit as $d \rightarrow \infty$. Then, there exist real numbers $\psi_{1}$ and $\psi_{2}$ such that $\lim_{m,n \rightarrow \infty}\lim_{d \rightarrow \infty} P\{(dT_{WMW} - ||\mu||^{2}\psi_{1})/\psi_{2}^{1/2} \leq x\} = \lim_{m,n \rightarrow \infty}\lim_{d \rightarrow \infty} P\{(T_{CQ}^{(2)} - ||\mu||^{2})/\Gamma_{1}^{1/2} \leq x\} = \Phi(x)$ for all $x \in \mathbb{R}$. Here, $\Phi$ is the cumulative distribution function of standard Gaussian distribution, and $\Gamma_{1}$ is as defined in Theorem \ref{thm-new000}.
\end{theorem}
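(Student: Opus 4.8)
The plan is to condition on the random scales and, for each frozen realisation of the scales, to reduce the problem to one of exactly the type treated in Theorem \ref{thm-new000}, and finally to average back over the scales.

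First I would fix the notation $\mathbf{X}_i = \mu_1 + \mathbf{V}_i/P_i$, $\mathbf{Y}_j = \mu_2 + \mathbf{W}_j/Q_j$ and let $\mathcal{G}$ denote the $\sigma$-field generated by the scales $P_1,\dots,P_m,Q_1,\dots,Q_n$. Conditionally on $\mathcal{G}$ the vectors $\mathbf{V}_i/P_i$ and $\mathbf{W}_j/Q_j$ are constant multiples of the original $\rho$-mixing sequences, so their mixing coefficients are unchanged and (C1)--(C2) continue to hold; only the per-sample dispersions are rescaled to $\Sigma_{\cal V}/P_i^2$ and $\Sigma_{\cal W}/Q_j^2$, where $\Sigma_{\cal V}=Disp(\mathbf{V})$ and $\Sigma_{\cal W}=Disp(\mathbf{W})$. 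The single new feature relative to Theorem \ref{thm-new000} is that the scale factors now differ from sample to sample, but the law-of-large-numbers control of the squared norms and the central-limit argument for the quadratic/$U$-statistic structure developed for Theorem \ref{thm-new000} carry over once these fixed scales are retained as weights.

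Next I would linearise the spatial sign exactly as in the proof of Theorem \ref{thm-new000}. Writing $\tau^2=Var(V_1)$, $\eta^2=Var(W_1)$ and $a_{ij}^2=\tau^2/P_i^2+\eta^2/Q_j^2$, a conditional law of large numbers gives $\|\mathbf{Y}_j-\mathbf{X}_i\|^2/d\to a_{ij}^2$, so that $S(\mathbf{Y}_j-\mathbf{X}_i)=(\mathbf{Y}_j-\mathbf{X}_i)/(\sqrt{d}\,a_{ij})$ up to a relative error that is negligible under (C3). Substituting this, $dT_{WMW}$ equals, to leading order, the scale-weighted quadratic form $\{(m)_2(n)_2\}^{-1}\sum\sum (\mathbf{Y}_{j_1}-\mathbf{X}_{i_1})'(\mathbf{Y}_{j_2}-\mathbf{X}_{i_2})/(a_{i_1j_1}a_{i_2j_2})$, whose conditional mean is $\|\mu\|^2 S_1$ with $S_1=\{(m)_2(n)_2\}^{-1}\sum\sum 1/(a_{i_1j_1}a_{i_2j_2})$ and whose conditional variance is the quantity I denote $S_2$, an explicit function of the scales and of ${\mbox{tr}}(\Sigma_{\cal V}^2),{\mbox{tr}}(\Sigma_{\cal W}^2),{\mbox{tr}}(\Sigma_{\cal V}\Sigma_{\cal W})$. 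No linearisation is needed for $T_{CQ}^{(2)}$: it is conditionally unbiased for $\|\mu\|^2$, and its conditional variance $S_3$ is the analogue of $\Gamma_1$ with $\Sigma_1,\Sigma_2$ replaced by the rescaled dispersions and averaged over samples. For almost every realisation in $\mathcal{G}$ the conditional CLT of Theorem \ref{thm-new000} then yields $(dT_{WMW}-\|\mu\|^2 S_1)/S_2^{1/2}\to N(0,1)$ and $(T_{CQ}^{(2)}-\|\mu\|^2)/S_3^{1/2}\to N(0,1)$ as $d\to\infty$. Because the conditional limit $N(0,1)$ does not depend on $\mathcal{G}$, taking $\mathcal{G}$-expectations of the conditional characteristic functions and invoking bounded convergence removes the conditioning and gives the same unconditional limits; this proves (a), and exhibits $dT_{WMW}$ and $T_{CQ}^{(2)}$ as the location/scale mixtures $\|\mu\|^2 S_1+S_2^{1/2}Z$ and $\|\mu\|^2+S_3^{1/2}Z$ with $Z\sim N(0,1)$ independent of $(S_1,S_2,S_3)$.

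For part (b) the scales are averaged out by letting $m,n\to\infty$ after $d\to\infty$. Since $S_1,S_2,S_3$ are $U$-statistics in the i.i.d.\ scales, the law of large numbers gives $S_1\to\psi_1$, $S_2/\psi_2\to 1$ and $S_3/\Gamma_1\to 1$ in probability, where $\psi_1=\{E(1/a)\}^2$, $\psi_2=\lim S_2$, and $\Gamma_1$ is formed with $\Sigma_1=E(P^{-2})\Sigma_{\cal V}$ and $\Sigma_2=E(Q^{-2})\Sigma_{\cal W}$; finiteness of $E(P),E(Q)$ secures $E(1/a)<\infty$ (as $1/a\le P/\tau$) while finiteness of $E(P^{-2}),E(Q^{-2})$ secures the variance limits. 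Decomposing $(dT_{WMW}-\|\mu\|^2\psi_1)/\psi_2^{1/2}$ as $[(dT_{WMW}-\|\mu\|^2 S_1)/S_2^{1/2}](S_2/\psi_2)^{1/2}+\|\mu\|^2(S_1-\psi_1)/\psi_2^{1/2}$ and noting that $\|\mu\|^2=O(\sqrt{d})$ while $\psi_2^{1/2}\asymp\sqrt{d}$, the inner limit $d\to\infty$ produces a random-location, random-scale Gaussian mixture, and the outer limit $m,n\to\infty$ collapses it to $N(0,1)$ by Slutsky's theorem; the same, simpler, argument applies to $T_{CQ}^{(2)}$ with the $\Gamma_1$ normaliser. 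The main obstacle is the conditional CLT for $T_{WMW}$: the spatial sign has to be linearised with an error that is negligible uniformly enough in the unbounded scales to survive the de-conditioning step, and the weight $1/(a_{i_1j_1}a_{i_2j_2})$ couples the two scale families non-multiplicatively, so the conditional variance computation and the CLT for this weighted $U$-statistic under $\rho$-mixing coordinates are the delicate points.
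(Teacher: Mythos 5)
Your strategy coincides with the paper's own proof: condition on the scales, establish the conditional law of large numbers for $||{\bf X}_i - {\bf Y}_j||^2/d$ (the paper's event $E$ with $Pr(E \mid P,Q) = 1$), linearise the spatial sign to obtain the scale-weighted quadratic form with conditional mean $||\mu||^{2}S_{1}$ and conditional variance $S_{2}(1+o(1))$, apply the $\rho$-mixing CLT (Theorem 4.0.1 in \cite{LL96}) conditionally on the scales, show the remainder terms vanish, and de-condition by bounded convergence; your $S_{1}$, $S_{3}$, the mixture representation in (a), and $\psi_{1} = E^{2}\{PQ/(\sigma_{V}^{2}Q^{2}+\sigma_{W}^{2}P^{2})^{1/2}\}$ all agree with the paper. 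Note also that the difficulty you flag at the end --- linearising ``uniformly enough in the unbounded scales'' --- does not actually arise on this route: since $m,n$ are fixed and finite and one conditions pointwise on the scales, almost-everywhere conditional convergence of the remainder plus dominated convergence is all that is needed, which is precisely how the paper proceeds.

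The genuine gap is in part (b), at the step where you assert that ``the law of large numbers gives $S_{2}/\psi_{2} \to 1$'' and that $\psi_{2}^{1/2} \asymp \sqrt{d}$. Both $S_{2}$ and $\psi_{2}$ depend on $d$ through ${\mbox{tr}}(\Sigma_{V}^{2})$, ${\mbox{tr}}(\Sigma_{W}^{2})$ and ${\mbox{tr}}(\Sigma_{V}\Sigma_{W})$, and they are \emph{different} linear combinations of these three traces (random versus deterministic coefficients). Because the inner limit is $d \to \infty$ with $m,n$ held fixed, your argument requires $\lim_{d\to\infty} S_{2}/\psi_{2}$ and $\lim_{d\to\infty} ||\mu||^{2}/\psi_{2}^{1/2}$ to exist, and a two-sided bound $\psi_{2} \asymp d$ delivers neither: if the ratios of the three traces oscillate in $d$, the ``random-location, random-scale Gaussian mixture'' you claim as the inner limit simply fails to exist. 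The paper supplies the missing ingredient: conditions (C1)--(C2) imply via Theorem 2.1.5 in \cite{LL96} that ${\cal V}$ and ${\cal W}$ possess continuous spectral densities, whence (proof of Theorem 18.2.1 in \cite{IL71}) each of the three traces equals a constant multiple of $d$ plus an $o(d)$ remainder; only then does $\psi_{2}/S_{2}$ converge almost surely as $d \to \infty$ to a random quantity $R_{m,n}$, and $||\mu||^{2}/\psi_{2}^{1/2}$ to a finite limit, after which the outer limit $m,n \to \infty$ sends $R_{m,n} \to 1$ and $S_{1} - \psi_{1} \to 0$ by the V-statistic strong law, exactly as you intend. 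The same trace-linearity fact is needed for your claim $S_{3}/\Gamma_{1} \to 1$ for the $T_{CQ}^{(2)}$ part. With this lemma inserted, your proposal matches the paper's proof in full.
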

\indent Unlike the setup considered in Section \ref{2}, where the coordinate variables are $\rho$-mixing, here the distributions of $T_{WMW}$ and $T_{CQ}^{(2)}$ cannot be approximated by Gaussian distributions when $m$ and $n$ are small even if $d$ is large. However, if the sample sizes are also large in addition to data dimension, we can approximate the distributions of these statistics by Gaussian distributions. It is easy to see that many probability models with the RSRM property do not satisfy the model assumptions in (3.1) in \cite{CQ10}. Nevertheless, the asymptotic distribution of $T_{CQ}^{(2)}$ obtained from part (b) of Theorem \ref{thm-new10} coincides with that obtained in Theorem 1 in \cite{CQ10}. Further, it also coincides with the Gaussian distribution obtained under the $\rho$-mixing model in Theorem \ref{thm-new000}. \\
\indent Let $\beta_{T_{WMW}}(\mu)$ and $\beta_{T_{CQ}^{(2)}}(\mu)$ denote the powers of the tests based on $T_{WMW}$ and $T_{CQ}^{(2)}$ under the alternative hypothesis $H_{A} : \mu \neq {\bf 0}$ at a given level of significance. The next theorem gives a comparison of the asymptotic powers of these tests.
\begin{theorem}  \label{thm-new100}
Assume that ${\bf Y}$ has the same distribution as ${\bf X} + \mu$. Suppose that all the conditions assumed in Theorem \ref{thm-new10} hold. Also, assume that $\lim_{m,n \rightarrow \infty} \lim_{d \rightarrow \infty} ||\mu||^{2}/\Gamma_{1}^{1/2} = c$ for some $c \in (0,\infty)$. Then, $\lim_{m,n \rightarrow \infty} \lim_{d \rightarrow \infty} \beta_{T_{WMW}}(\mu) > \lim_{m,n \rightarrow \infty} \lim_{d \rightarrow \infty} \beta_{T_{CQ}^{(2)}}(\mu)$.
\end{theorem}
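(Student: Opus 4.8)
The plan is to reduce the comparison of the two limiting powers to a comparison of two noncentrality parameters, and then to reduce that to a strict moment inequality in the scaling variables $P$ and $Q$, which I would settle by Chebyshev's correlation inequality.

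First I would record the limiting powers. Both tests reject for large values of the statistic. By part (b) of Theorem~\ref{thm-new10}, in the iterated limit $\lim_{m,n}\lim_{d}$ the studentized statistics $(dT_{WMW}-\|\mu\|^{2}\psi_{1})/\psi_{2}^{1/2}$ and $(T_{CQ}^{(2)}-\|\mu\|^{2})/\Gamma_{1}^{1/2}$ are standard Gaussian, both under $H_{0}$ and, because the centering shifts by $\|\mu\|^{2}\psi_{1}$ (respectively $\|\mu\|^{2}$) while the scale is unchanged to leading order, under the local alternative. Calibrating each test to asymptotic level $\alpha$ then gives
$$\lim_{m,n}\lim_{d}\beta_{T_{CQ}^{(2)}}(\mu)=\Phi(c-z_{\alpha}),\qquad \lim_{m,n}\lim_{d}\beta_{T_{WMW}}(\mu)=\Phi(\delta-z_{\alpha}),$$
where $z_{\alpha}$ is the upper $\alpha$-quantile of $\Phi$, $c=\lim\|\mu\|^{2}/\Gamma_{1}^{1/2}$, and $\delta=\lim\|\mu\|^{2}\psi_{1}/\psi_{2}^{1/2}$. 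Since $\Phi$ is strictly increasing, the theorem is equivalent to the strict inequality $\delta>c$.

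Next I would make $\psi_{1}$, $\psi_{2}$ and $\Gamma_{1}$ explicit. Writing $\mathbf{X}=\mu_{1}+\mathbf{V}/P$ and $\mathbf{Y}=\mu_{2}+\mathbf{W}/Q$ and using that $\|\mathbf{V}\|^{2},\|\mathbf{W}\|^{2}$ concentrate at $d\sigma^{2}$ under (C1)--(C2), a leading-order expansion of the spatial sign gives $E\{S(\mathbf{Y}-\mathbf{X})\}\approx(\mu/\sqrt{d})\,\sigma^{-1}A_{0}$ with $A_{0}=E\{PQ/\sqrt{P^{2}+Q^{2}}\}$, so $\psi_{1}=A_{0}^{2}/\sigma^{2}$. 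A Hoeffding decomposition of the two-sample $U$-statistic $T_{WMW}$ (whose first-order projections vanish under $H_{0}$ because $E\{S(\mathbf{Y}-\mathbf{X})\}=\mathbf{0}$) shows that the projection onto a single $\mathbf{X}_{i}$ equals $-(\mathbf{V}_{i}/\sqrt{d})\,\sigma^{-1}g(P_{i})$, where $g(p)=E\{Q/\sqrt{p^{2}+Q^{2}}\}$; collecting the $(2,0)$, $(0,2)$ and $(1,1)$ terms yields $\psi_{2}=\sigma^{-4}B_{0}^{2}\,\mathrm{tr}(\Sigma_{V}^{2})\,[2/(m)_{2}+2/(n)_{2}+4/(mn)]$ with $B_{0}=E\{g(P)^{2}\}$. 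The corresponding elementary computation for $T_{CQ}^{(2)}$, using $\mathrm{Disp}(\mathbf{X})=E(P^{-2})\Sigma_{V}$, gives $\Gamma_{1}=[E(P^{-2})]^{2}\,\mathrm{tr}(\Sigma_{V}^{2})\,[2/(m)_{2}+2/(n)_{2}+4/(mn)]$. The factors $\mathrm{tr}(\Sigma_{V}^{2})$, the bracketed $(m,n)$-term and the powers of $\sigma$ cancel in the ratio, leaving
$$\frac{\delta}{c}=\frac{\psi_{1}\,\Gamma_{1}^{1/2}}{\psi_{2}^{1/2}}=\frac{A_{0}^{2}\,E(P^{-2})}{B_{0}}.$$

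The heart of the argument is the strict inequality $A_{0}^{2}\,E(P^{-2})>B_{0}$, which I would prove by two applications of Chebyshev's correlation inequality. Put $\phi(p)=p\,g(p)=E\{pQ/\sqrt{p^{2}+Q^{2}}\}$; since $\partial_{p}\{pQ/\sqrt{p^{2}+Q^{2}}\}=Q^{3}/(p^{2}+Q^{2})^{3/2}>0$, the function $\phi$ is strictly increasing, while $g(p)=\phi(p)/p$ and $p^{-2}$ are decreasing. Writing $B_{0}=E\{\phi(P)\cdot g(P)/P\}$ and applying Chebyshev's inequality to the increasing factor $\phi(P)$ and the decreasing factor $g(P)/P$ gives $B_{0}\le E\{\phi(P)\}\,E\{g(P)/P\}=A_{0}\,E\{\phi(P)P^{-2}\}$; a second application to $\phi(P)$ and $P^{-2}$ gives $E\{\phi(P)P^{-2}\}\le A_{0}\,E(P^{-2})$, whence $B_{0}\le A_{0}^{2}\,E(P^{-2})$. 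Because $\phi$ is strictly increasing and $P$ is non-degenerate (the defining feature of an RSRM sequence), $\phi(P)$ is not almost surely constant, so the Chebyshev inequalities are strict; therefore $\delta>c$ and $\Phi(\delta-z_{\alpha})>\Phi(c-z_{\alpha})$, proving the theorem. Reassuringly, in the degenerate case $P\equiv Q\equiv 1$ the inequality collapses to $A_{0}^{2}E(P^{-2})=B_{0}$, matching the equality of the limiting powers under the $\rho$-mixing model in Theorem~\ref{thm-new0}.

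The main obstacle is not this final inequality but the rigorous identification of $\psi_{1}$ and $\psi_{2}$: one must control the spatial-sign expansions sharply enough to confirm that the centering and scaling supplied only abstractly by Theorem~\ref{thm-new10}(b) are exactly $A_{0}^{2}/\sigma^{2}$ and the displayed $\psi_{2}$, and that all remainders (the cross terms in $\|\mu+\mathbf{U}\|$, the fluctuations of $\|\mathbf{V}\|^{2}$ about $d\sigma^{2}$, and the higher-order $U$-statistic projections) are negligible in the iterated limit. In particular one must verify that the null and local-alternative variances of $dT_{WMW}$ agree to leading order, so that the level-$\alpha$ critical value obtained under $H_{0}$ may legitimately be used in the power expression under $H_{A}$; this is where conditions (C1)--(C2) together with the moment bounds $E(P),E(Q),E(P^{-2}),E(Q^{-2})<\infty$ are used.
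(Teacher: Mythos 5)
Your proposal is correct, and its overall architecture is the same as the paper's: both arguments invoke Theorem \ref{thm-new10}(b) to reduce the theorem to a strict inequality between noncentrality parameters, namely $\delta > c$ with $\delta/c = A_{0}^{2}E(P^{-2})/B_{0}$ in your notation, where $A_{0} = E\{PQ/(P^{2}+Q^{2})^{1/2}\}$, $g(p) = E\{Q/(p^{2}+Q^{2})^{1/2}\}$ and $B_{0} = E\{g(P)^{2}\} = E\{Q_{1}Q_{2}/[(P_{1}^{2}+Q_{1}^{2})^{1/2}(P_{1}^{2}+Q_{2}^{2})^{1/2}]\}$. Your identifications $\psi_{1} = A_{0}^{2}/\sigma_{V}^{2}$ and $\psi_{2} = \sigma_{V}^{-4}B_{0}^{2}\,\mathrm{tr}(\Sigma_{V}^{2})\{2/(m)_{2}+2/(n)_{2}+4/(mn)\}$ agree exactly with the values the paper carries over from the proof of Theorem \ref{thm-new10}, so the ``main obstacle'' you flag at the end — rigorously justifying the sign expansion, the Hoeffding-type projections and the agreement of the null and local-alternative scalings — need not be redone: it is precisely what the proof of Theorem \ref{thm-new10}(b) supplies through the almost sure limits of the $V$-statistics $S_{1}$ and $S_{2}$, and you are entitled to cite it. The genuinely different ingredient is the key inequality $A_{0}^{2}E(P^{-2}) > B_{0}$. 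The paper separates the two sides by the explicit constant $1/2$: conditional Jensen gives $B_{0} = E[E^{2}\{Q/(P_{1}^{2}+Q^{2})^{1/2}\,|\,P_{1}\}] < E\{Q^{2}/(P^{2}+Q^{2})\} = 1/2$, the final evaluation using exchangeability of the independent pair $P \overset{d}{=} Q$, while two Cauchy--Schwarz steps give $A_{0}^{2} > [E\{(P^{2}+Q^{2})^{1/2}/(PQ)\}]^{-2} \geq [E\{(P^{2}+Q^{2})/(P^{2}Q^{2})\}]^{-1} = [2E(P^{-2})]^{-1}$. You instead prove the inequality in one stroke by two applications of Chebyshev's covariance inequality, pairing the strictly increasing $\phi(p) = p\,g(p)$ against the decreasing factors $g(p)/p$ and $p^{-2}$; your monotonicity computations are correct, and your strictness argument (strict monotonicity of $\phi$ together with non-degeneracy of $P$, which is built into Definition \ref{def2}) is cleaner than tracking equality cases through Jensen and Cauchy--Schwarz. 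Your route is also marginally more general, since it uses only independence of $P$ and $Q$ and never $P \overset{d}{=} Q$, which the paper needs to obtain the pivot value $1/2$; what the paper's version buys is the quantitative separation $B_{0} < 1/2 < A_{0}^{2}E(P^{-2})$, whereas your version makes the boundary case transparent — for degenerate scaling both sides collapse to $1/2$, recovering the equality of limiting powers in Theorem \ref{thm-new0} and the ``equality if and only if spherical Gaussian'' assertion of Remark \ref{rem2}, exactly as your final sanity check observes.
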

\indent If $\lim_{m,n \rightarrow \infty} \lim_{d \rightarrow \infty}||\mu||^{2}/\Gamma_{1}^{1/2}$ equals zero (respectively, infinity), then the asymptotic powers of the tests based on $T_{WMW}$ and $T_{CQ}^{(2)}$ in the setup of Theorem \ref{thm-new100} coincide, and they are both equal to the nominal level (respectively, equal to one). Theorem \ref{thm-new100} shows that for appropriate sequences of alternatives, the test based on $T_{WMW}$ is more powerful than the test based on $T_{CQ}^{(2)}$ for a large class of distributions including many spherical non-Gaussian distributions, when the data dimension as well as the sample sizes are large. Note that if ${\bf X}$ and ${\bf Y}$ have spherically symmetric distributions, then the conditions on $\mu$ in Theorems \ref{thm-new10} and \ref{thm-new100} hold if $\lim_{m,n \rightarrow \infty} \lim_{d \rightarrow \infty} (m+n)||\mu||^{2}/d^{1/2} = c' \in (0,\infty)$, and $\lim_{m,n \rightarrow \infty} m/(m+n) = \gamma \in (0,1)$.

\subsection{Empirical study using some RSRM models}
\label{3.1}
\indent The limiting null distribution of $T_{WMW}$ obtainable from Theorem \ref{thm-new10} cannot be used to implement this test because the parameters appearing in its limiting distribution cannot be estimated from the data. To compare the performances of the tests based on $T_{WMW}$ and $T_{CQ}^{(2)}$ for data from the spherical $t(5)$ distribution, we implemented these tests using their permutation distributions. Such an implementation has also been used by \cite{WLWM15} for their test. Though it is not possible to implement the test based on $T_{WMW}$ using its true asymptotic distribution in practice, we can do it for a simulation study, where the distributions and the associated parameters are known. On the other hand, since the true asymptotic null distribution of $T_{CQ}^{(2)}$ for RSRM models coincides with its asymptotic null distribution in the $\rho$-mixing setup, the implementation of this test can be done in the same way as described in subsection \ref{2.1}. We have chosen $m=n=20$, and $\mu = (c,0,0,\ldots,0)$ with $c = 1,1.5,2,2.5,3$ for $d = 100,200,400,800,1600$, respectively. Figure \ref{Fig2} shows that the sizes and the powers of these tests obtained by using the permutation implementation are not significantly different from the sizes and the powers of the tests implemented using their true asymptotic distributions. The permutation distributions of $T_{WMW}$ and $T_{CQ}^{(2)}$ adequately approximates their true distributions. Also, the test based on $T_{WMW}$ significantly outperforms the test based on $T_{CQ}^{(2)}$, which conforms with the result in Theorem \ref{thm-new100}.
\begin{figure}[ht!]
\begin{center}
\includegraphics[width=4.5in,height=1.5in]{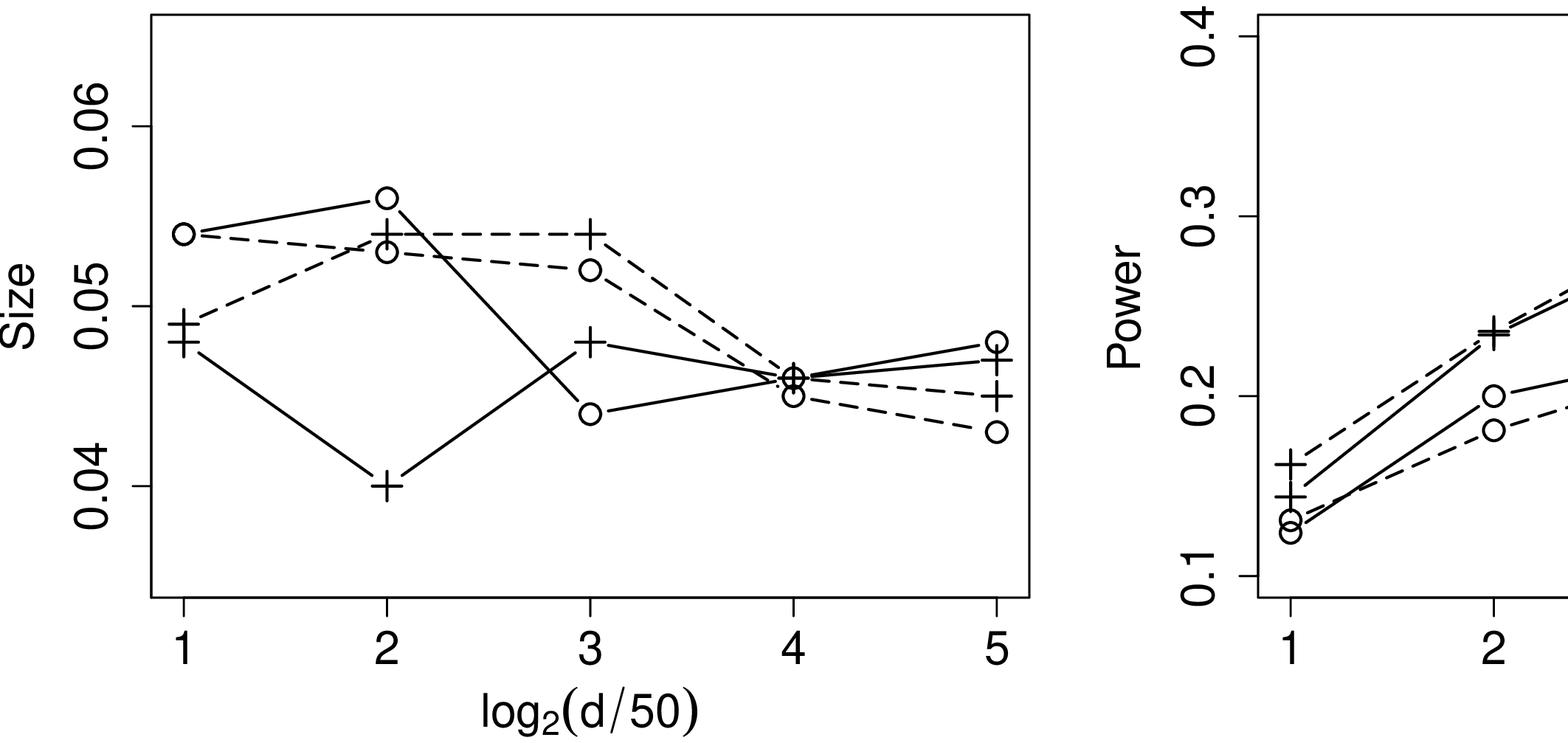}
\end{center}
\vspace{-0.15in}
\caption{Empirical sizes and powers of the tests based on $T_{WMW}$ ($+$) and $T_{CQ}^{(2)}$ ($\circ$) at nominal $5\%$ level for the spherical $t(5)$ distribution using the permutation implementation (solid curves) and the true implementation (dashed curves).  \label{Fig2}}
\end{figure}

\subsection{Asymptotic behaviours of one sample tests under stronger dependence}
\label{3.2}
\indent We will now study the asymptotic distributions of the one sample tests considered in subsection \ref{2.1} under the RSRM model. Let ${\bf X}_{1}, {\bf X}_{2}, \ldots, {\bf X}_{n}$ be i.i.d. copies of a random vector ${\bf X} \in \mathbb{R}^{d}$. The following theorem summarizes the asymptotic distributions of $T_{S}$, $T_{SR}$ and $T_{CQ}^{(1)}$ and yields their asymptotic powers. As earlier, we can write ${\bf X}_{i} = \mu + {\bf V}_{i}/P_{i}$, $1 \leq i \leq n$. Also, $\beta_{T_{S}}(\mu)$, $\beta_{T_{SR}}(\mu)$ and $\beta_{T_{CQ}^{(1)}}(\mu)$ denote the powers of the tests based on $T_{S}$, $T_{SR}$ and $T_{CQ}^{(1)}$ at a given level of significance, when the alternative hypothesis $H_{A} : \mu \neq {\bf 0}$ is true.
\begin{theorem} \label{thm-new3}
Let ${\bf X} = \mu + \widetilde{{\bf V}}$, where $\widetilde{{\bf V}}$ is the vector formed by the first $d$ coordinates of the sequence $\widetilde{{\cal V}}$ satisfying condition (C5), and $\mu$ satisfies condition (C4) with $\Sigma$ in that condition replaced by $Disp(\widetilde{{\bf V}})$. \\
(a) There exist $\Gamma_{3} > 0$ and random variables $Z_{k}$, $1 \leq k \leq 4$, which are functions of the $P_{i}$'s, such that each of $(dT_{S} - ||\mu||^{2}Z_{1})/\Gamma_{3}^{1/2}$, $(dT_{SR} - 2||\mu||^{2}Z_{2})/(2Z_{3}^{1/2})$ and $(T_{CQ}^{(1)} - ||\mu||^{2})/Z_{4}^{1/2}$ converges weakly to a standard Gaussian variable as $d \rightarrow \infty$ for each $m,n \geq 1$. Consequently, for each fixed $m,n \geq 1$, the distributions of $T_{S}$, $T_{SR}$ and $T_{CQ}^{(1)}$ are given by location and scale mixtures of Gaussian distributions, when the data dimension is large. \\
(b) Also, assume that both $E(P)$ and $E(P^{-2})$ are finite, and $||\mu||^{2}/d^{1/2}$ tends to a finite non-negative limit as $d \rightarrow \infty$. Define $\sigma^{2} = Var(X_{1})$. There exist real numbers $\theta_{k}$, $1 \leq k \leq 3$ such that $\lim_{n \rightarrow \infty}\lim_{d \rightarrow \infty} P\{(d\sigma^{2}T_{S} - ||\mu||^{2}\theta_{1})/\Gamma_{2}^{1/2} \leq x\} = \lim_{n \rightarrow \infty}\lim_{d \rightarrow \infty} P\{(d\sigma^{2}T_{SR} - 2||\mu||^{2}\theta_{2})/(2\theta_{3}^{1/2}) \leq x) = \lim_{n \rightarrow \infty}\lim_{d \rightarrow \infty} P\{(T_{CQ}^{(1)} - ||\mu||^{2})/\Gamma_{2}^{1/2} \leq x) = \Phi(x)$ for all $x \in \mathbb{R}$. Here, $\Phi$ denotes the cumulative distribution function of a standard Gaussian distribution, and $\Gamma_{2}$ is as defined in Theorem \ref{thm-new2}. \\
(c) Further, if we let $\lim_{n \rightarrow \infty} \lim_{d \rightarrow \infty} ||\mu||^{2}/\Gamma_{2}^{1/2} = c$, where $c \in (0,\infty)$, we have $\lim_{n \rightarrow \infty} \lim_{d \rightarrow \infty} \beta_{T_{S}}(\mu) > \lim_{n \rightarrow \infty} \lim_{d \rightarrow \infty} \beta_{T_{CQ}^{(1)}}(\mu)$. We also have $\lim_{n \rightarrow \infty} \lim_{d \rightarrow \infty} \beta_{T_{SR}}(\mu) > \lim_{n \rightarrow \infty} \lim_{d \rightarrow \infty} \beta_{T_{CQ}^{(1)}}(\mu)$.
\end{theorem}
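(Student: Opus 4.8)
The plan is to reduce the RSRM problem to the $\rho$-mixing problem of subsection \ref{2.2} by conditioning on the scaling variables, and then to read off the asymptotics from the conditional analogues of Theorem \ref{thm-new2}. Writing ${\bf X}_{i} = \mu + {\bf V}_{i}/P_{i}$, the crucial observation is that the spatial sign absorbs the random scale, $S({\bf X}_{i}) = S(P_{i}\mu + {\bf V}_{i})$, so that on setting ${\bf U}_{i} = P_{i}\mu + {\bf V}_{i}$ and conditioning on ${\cal P} = (P_{1},\ldots,P_{n})$, the vectors ${\bf U}_{1},\ldots,{\bf U}_{n}$ are independent, each being a $\rho$-mixing noise vector with dispersion $\Sigma_{\bf V} = Disp({\bf V})$ and coordinate variance $\sigma_{\bf V}^{2} = Var(V_{1})$, but carrying the \emph{heterogeneous} location shift $P_{i}\mu$. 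First I would record the two identities $\Sigma = Disp({\bf V}/P) = E(P^{-2})\Sigma_{\bf V}$ and $\sigma^{2} = E(P^{-2})\sigma_{\bf V}^{2}$, which follow since ${\bf V}$ is independent of $P$ and zero mean, and which will be used repeatedly to pass between the $\rho$-mixing scale and the observed scale.

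For part (a) I would run, conditionally on ${\cal P}$, the same high dimensional expansion that underlies Theorem \ref{thm-new2}: since $||{\bf U}_{i}||^{2} = d\sigma_{\bf V}^{2}(1 + o_{P}(1))$ by the mixing concentration while $||P_{i}\mu||^{2} = o(d)$, one has $S({\bf U}_{i}) \approx {\bf U}_{i}/(\sqrt{d}\,\sigma_{\bf V})$, whence $E\{S({\bf U}_{i}) \mid {\cal P}\} \approx P_{i}\mu/(\sqrt{d}\,\sigma_{\bf V})$. Substituting this into the $U$-statistic defining each test and taking a H\'ajek projection yields conditional central limit theorems whose centering and scaling are functions of the $P_{i}$'s. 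For $T_{S}$ the conditional mean is $||\mu||^{2}Z_{1}$ with $Z_{1} = \sigma_{\bf V}^{-2}(n)_{2}^{-1}\sum_{i_{1} \neq i_{2}} P_{i_{1}}P_{i_{2}}$, while the conditional variance is the \emph{non-random} quantity $\Gamma_{3} = 2\,\mbox{tr}(\Sigma_{\bf V}^{2})/\{(n)_{2}\sigma_{\bf V}^{4}\}$, because after the sign has cancelled $P_{i}$ the dominant fluctuation comes from ${\bf V}_{i_{1}}'{\bf V}_{i_{2}}$, whose variance $\mbox{tr}(\Sigma_{\bf V}^{2})$ does not involve ${\cal P}$ (here the second part of (C4) ensures the shift terms are negligible). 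For $T_{SR}$ the same expansion applied to $S({\bf X}_{i_{1}} + {\bf X}_{i_{2}}) = S(2\mu + {\bf V}_{i_{1}}/P_{i_{1}} + {\bf V}_{i_{2}}/P_{i_{2}})$ produces a \emph{random} conditional variance $4Z_{3}$ and a random mean $2||\mu||^{2}Z_{2}$, the noise scale now depending on $P_{i_{1}}^{-2}+P_{i_{2}}^{-2}$, which no single scalar removes; for $T_{CQ}^{(1)}$ the mean is exactly $||\mu||^{2}$ and the variance $Z_{4} \approx 2\,\mbox{tr}(\Sigma_{\bf V}^{2})(n)_{2}^{-2}\sum_{i_{1} \neq i_{2}} P_{i_{1}}^{-2}P_{i_{2}}^{-2}$ is random. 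Since these conditional limits are standard Gaussian for almost every ${\cal P}$, averaging over ${\cal P}$ gives the stated weak convergence and the location-scale mixture representations, with $T_{S}$ a pure location mixture and $T_{CQ}^{(1)}$ a pure scale mixture.

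For part (b) I would let $n \to \infty$ after $d \to \infty$. Under $E(P) < \infty$ and $E(P^{-2}) < \infty$, the random normalizers are $U$-statistics in the i.i.d. variables $P_{i}$, so by the strong law $\sigma^{2}Z_{1} \to E(P^{-2})(EP)^{2} =: \theta_{1}$ and $Z_{4}/\Gamma_{2} \to 1$, and similarly $Z_{2},Z_{3}$ converge to constants $\theta_{2},\theta_{3}$; using $\Sigma = E(P^{-2})\Sigma_{\bf V}$ one checks $\sigma^{4}\Gamma_{3} = \Gamma_{2}$, so that after multiplying by $\sigma^{2}$ every statistic is standardized exactly as stated (by $\Gamma_{2}^{1/2}$, respectively $2\theta_{3}^{1/2}$). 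Because the conditional law is Gaussian with parameters converging to constants, the mixture degenerates to a single Gaussian; the fluctuation of the random centering around its mean is $O(n^{-1/2})$ relative to the scale $\Gamma_{2}^{1/2}$ and so washes out. I would make the interchange of the two limits rigorous by passing to conditional characteristic functions and invoking dominated convergence, the integrability needed being precisely the assumed finiteness of $E(P)$ and $E(P^{-2})$.

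For part (c), under $\lim_{n}\lim_{d} ||\mu||^{2}/\Gamma_{2}^{1/2} = c \in (0,\infty)$ each test rejects for large values of its statistic calibrated by the null law from part (b), so the limiting power is $\Phi(\kappa - z_{\alpha})$, where $z_{\alpha}$ is the upper $\alpha$-point of $N(0,1)$ and $\kappa$ is the limiting standardized mean shift (the alternative variance agreeing with the null variance to leading order): $\kappa = c$ for $T_{CQ}^{(1)}$, $\kappa = c\,\theta_{1}$ for $T_{S}$, and a corresponding $\kappa = c\,\theta_{2}\Gamma_{2}^{1/2}/\theta_{3}^{1/2}$ for $T_{SR}$. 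The comparison therefore reduces to showing that each sign/rank noncentrality exceeds $c$. For $T_{S}$ this is $\theta_{1} = E(P^{-2})(EP)^{2} > 1$, which follows from Jensen's inequality applied twice, $E(P^{-2}) \geq (EP^{-1})^{2} \geq (EP)^{-2}$, the inequality being strict because $P$ is non-degenerate; for $T_{SR}$ it follows from an analogous convexity and Cauchy--Schwarz argument for the symmetric kernel $(P_{1}^{-2}+P_{2}^{-2})^{-1/2}$. This yields $\lim_{n}\lim_{d} \beta_{T_{S}}(\mu) > \lim_{n}\lim_{d} \beta_{T_{CQ}^{(1)}}(\mu)$ and likewise for $T_{SR}$. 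I expect the main obstacle to be the conditional central limit theorem of part (a): one must carry out the spatial-sign linearization and the $U$-statistic projection almost surely uniformly in ${\cal P}$, controlling the remainders from the heterogeneous shifts $P_{i}\mu$ and from the random normalizing norms, and then upgrade the conditional convergence to unconditional convergence that survives the second limit; by contrast the moment inequality driving the strict power dominance is the clean conceptual core and is comparatively routine.
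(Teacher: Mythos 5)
Your proposal follows essentially the same route as the paper's own proof: conditioning on the scale variables $P_{i}$, exploiting the scale-invariance of the spatial sign, using the $\rho$-mixing concentration of norms to linearize the signs, applying the conditional CLT in $d$ for the leading term, and your explicit expressions for $Z_{1}$, $\Gamma_{3} = 2\,\mbox{tr}(\Sigma_{V}^{2})/\{(n)_{2}\sigma_{V}^{4}\}$, $Z_{4}$, the identities $\Sigma = E(P^{-2})\Sigma_{V}$ and $\sigma^{4}\Gamma_{3} = \Gamma_{2}$, the V-statistic strong laws with $Z_{4}/\Gamma_{2} \rightarrow 1$, and the Jensen bound $E^{2}(P)E(P^{-2}) > 1$ (with the Jensen/Cauchy--Schwarz analogue for $T_{SR}$) all coincide with what the paper does. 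The only cosmetic difference is your ``H\'ajek projection'' phrasing---for fixed $n$ the CLT runs across the $d$ coordinates of the mixing sequence rather than through a projection in $n$---but the substance of the argument is the paper's.
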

\indent It is seen from the proof of part (a) of Theorem \ref{thm-new3} that if $E(P^{-2}) < \infty$, we have $\Gamma_{3} = \sigma^{-4}\Gamma_{2}$. In this case, we get the same limiting null distributions of $T_{S}$ from parts (a) and (b), i.e., its limiting null distribution is Gaussian irrespective of whether the sample size grows to infinity or not. Further, this limiting null distribution under the RSRM model is the same as that obtained under the $\rho$-mixing model in part (a) of Theorem \ref{thm-new10}. This is because the spatial sign $S({\bf x}) = {\bf x}/||{\bf x}||$, and thus $T_{S}$, remain invariant under homogeneous positive scale transformations of the coordinate variables. \\
\indent Note that the asymptotic distribution of $T_{CQ}^{(1)}$ is the same as that obtained in Theorem \ref{thm-new100} under the $\rho$-mixing setup, and it coincides with the asymptotic distribution of $T_{CQ}^{(1)}$ obtained by \cite{CQ10}. For the spherical $t$ distribution, which is a distribution included in our RSRM models, \cite{WPL15} derived the asymptotic distribution of $T_{CQ}^{(1)}$ and proved that the test based on $T_{S}$ is asymptotically more powerful than the former test. In the setup of Theorem \ref{thm-new3}, if $\lim_{n \rightarrow \infty} \lim_{d \rightarrow \infty} ||\mu||^{2}/\Gamma_{2}^{1/2}$ equals zero (respectively, infinity), then the asymptotic powers of the tests based on $T_{S}$, $T_{SR}$ and $T_{CQ}^{(1)}$ coincide, and they are all equal to the nominal level (respectively, equal to one).
\begin{remark} \label{rem2}
Suppose that in a two sample problem, ${\bf Y}$ is distributed as ${\bf X} + \mu$, where ${\bf X}$ is the vector formed by the first $d$ coordinates of a zero mean spherically symmetric or rotatable infinite sequence ${\cal X}$. Then, it follows from Theorem 1.31 in \cite{Kall05} that ${\bf X} = {\bf V}/P$, where ${\bf V}$ is a standard spherical Gaussian vector, and $P$ is a non-negative random variable independent of ${\bf V}$. Suppose that $\lim_{m,n \rightarrow \infty}\lim_{d \rightarrow \infty} (m+n)||\mu||^{2}/d^{1/2} = c' \in (0,\infty)$ and $\lim_{m,n \rightarrow \infty} m/(m+n) = \gamma \in (0,1)$. Also, assume that both $E(P)$ and $E(P^{-2})$ are finite and positive. Then, it follows from Theorems \ref{thm-new0} and \ref{thm-new100} that the test based on $T_{WMW}$ is asymptotically at least as powerful as the test based on $T_{CQ}^{(2)}$ if we first let the dimension and then the sample sizes grow to infinity. Further, their asymptotic powers are equal if and only if ${\bf X}$ has a spherical Gaussian distribution. In fact, in this case, their asymptotic powers are the same for any sample sizes if only the dimension grows to infinity.
\end{remark}
\begin{remark}  \label{rem3}
Suppose that in a one sample problem, we have ${\bf X} = \mu + \widetilde{{\bf V}}$, where $\widetilde{{\bf V}}$ is the vector formed by the first $d$ coordinates of a spherically symmetric infinite sequence. Assume that $\lim_{n \rightarrow \infty}\lim_{d \rightarrow \infty} n||\mu||^{2}/d^{1/2} = c' \in (0,\infty)$. Also, let both $E(P)$ and $E(P^{-2})$ be finite. Then, it follows from Theorems \ref{thm-new2} and \ref{thm-new3} that the tests based on $T_{S}$ and $T_{SR}$ are asymptotically at least as powerful as the test based on $T_{CQ}^{(1)}$ if we first let the dimension and then the sample size grow to infinity. Further, the asymptotic powers of all three tests are equal if and only if the distribution of ${\bf X}$ is spherical Gaussian. In fact, in this case, their asymptotic powers are the same for any sample size if only the dimension grows to infinity.
\end{remark}

\section{Analysis of real data}
\label{4}
\indent We now investigate the performances of the two sample tests based on $T_{WMW}$ and $T_{CQ}^{(2)}$ on some real datasets, when they are implemented in two different ways, namely, as in the $\rho$-mixing setup described in subsection \ref{2.2}, and using their permutation distributions. Two datasets are obtained from \path{http://www.cs.ucr.edu/~eamonn/time_series_data}, and the first of them is the ECG Data, which contains 69 normal ECG curves and 31 ECG curves of patients with a particular heart disease, and each curve is measured at 96 time points. The second data is the Gun Data, which contains the readings along the horizontal axis of the centroid of the right hand during two action sequences, namely, gun-draw and gun-point with 24 samples and 26 samples, respectively. Each action sequence is recorded at 150 time points. The third data is the Colon Data, which is obtained from \path{http://datam.i2r.a-star.edu.sg/datasets/krbd/ColonTumor/ColonTumor.zip} and contains the expression levels of $2000$ genes from $40$ tumor tissue and $22$ normal tissue. The fourth data is the Sonar Data obtained from \path{http://archive.ics.uci.edu/ml/datasets.html}, which contains sonar signals emitted from 111 metal cylinder samples and 97 rock samples, and each signal is recorded at 60 wavelengths. To estimate the sizes of the tests based on $T_{WMW}$ and $T_{CQ}^{(2)}$ for each data, we selected two random subsamples $1000$ times from one class in that data and computed the proportion of rejections for each test. The same procedure is now repeated for the other class and the two values obtained for each test are averaged. For evaluating the powers of these tests, we selected $1000$ random subsamples each from the two classes and computed the proportions of rejections for the tests. The size of each subsample is $20\%$, $40\%$, $40\%$ and $20\%$ of the original sample size for the ECG Data, the Gun Data, the Colon Data and the Sonar Data, respectively. These choices are made to ensure that the resulting datasets remain
high dimensional, and the powers of the tests are neither too close to the nominal $5\%$ level nor to one. For computing the permutation distributions of the test statistics, we have used $500$ random permutations of the two subsamples.

\begin{table}[ht!]
\centering
\caption{Sizes and powers of the tests based on $T_{WMW}$ and $T_{CQ}^{(2)}$ at nominal $5\%$ level for some real data.  \label{Tab1}}
\begin{tabular}{rrrrrrrrr}
\hline
Data $\longrightarrow$ & \multicolumn{2}{c}{ECG} & \multicolumn{2}{c}{Gun} & \multicolumn{2}{c}{Colon} & \multicolumn{2}{c}{Sonar} \\ \hline
\multicolumn{9}{c}{Implementation as in the $\rho$-mixing setup} \\ \hline
& Size & Power & Size & Power & Size & Power & Size & Power \\
$T_{WMW}$ & 0.052 & 0.593 & 0.052 & 0.501 & 0.056 & 0.747 & 0.036 & 0.507 \\
$T_{CQ}^{(2)}$ & 0.063 & 0.601 & 0.058 & 0.500 & 0.063 & 0.641 & 0.058 & 0.432 \\ \hline
\multicolumn{9}{c}{Permutation implementation} \\ \hline
& Size & Power & Size & Power & Size & Power & Size & Power \\
$T_{WMW}$ & 0.057 & 0.643 & 0.055 & 0.472 & 0.055 & 0.723 & 0.043 & 0.519 \\
$T_{CQ}^{(2)}$ & 0.057 & 0.624 & 0.052 & 0.442 & 0.060 & 0.596 & 0.038 & 0.360 \\ \hline
\end{tabular}
\end{table}

\indent Table \ref{Tab1} shows that the sizes as well as the powers of the tests for the two implementations are not significantly different. However, the permutation implementation required almost ten times more computing time. Moreover, the sizes of the tests are close to the nominal $5\%$ level for all the four datasets. Further, the powers of the tests based on $T_{WMW}$ and $T_{CQ}^{(2)}$ are not significantly different for the ECG data and the Gun data. However, the test based on $T_{WMW}$ is significantly more powerful than the test based on $T_{CQ}^{(2)}$ for the Colon data and the Sonar data.

\section{Concluding remarks and discussion}
\label{5}
\indent  We now consider the performances of some other mean based tests studied in the literature and discussed in Section \ref{2} on some simulated datasets. We denote the test statistics associated with the tests in \cite{SKK13} and \cite{GCBL14} by $T_{SKK}$ and $T_{GCBL}$, respectively. For the $AR(1)$ models in subsection \ref{2.1}, we found that the size of the test based on $T_{SKK}$ increases with $d$ and becomes significantly larger than the nominal $5\%$ level for $d \geq 400$. \cite{FZWZ15} proved that the size of this test converges to one as the dimension and the sample sizes grow to infinity at a certain rate for a class of models, which include these $AR(1)$ models. Under the spherical $t(5)$ model in subsection \ref{3.1}, the size of the test based on $T_{SKK}$ is significantly less than the nominal level for all values of $d$ considered and decreases to zero as $d$ increases. The size of the test based on $T_{GCBL}$ is significantly larger than the nominal level for all values of $d$ considered under the $AR(1)$ models as well as the spherical $t(5)$ model. It seems that the estimates of the critical values for the tests based on $T_{SKK}$ and $T_{GCBL}$ are adversely affected if the sample size is much smaller than the dimension as in our simulation study. On the other hand, we found that permutation implementations of these tests correct their sizes under all of the above models. Even then, these tests are significantly less powerful than the test based on $T_{WMW}$ (respectively, $T_{CQ}^{(2)}$) under all the above models (respectively, $AR(1)$ models) but they outperform the test based on $T_{CQ}^{(2)}$ under the spherical $t(5)$ model. The readers are referred to Appendix -- III for more details.  \\
\indent \cite{CLX14} showed that their test has better power than other tests based on sum of squares of coordinatewise mean difference or coordinatewise $t$ statistics, when the mean shift has only a few non-zero coordinates. However, we observed that this test becomes significantly less powerful than the tests based on $T_{WMW}$ and $T_{CQ}^{(2)}$, when the mean shifts in the models considered in subsections \ref{2.1} and \ref{3.1} are distributed equally among all the coordinates. Moreover, the size of the test in \cite{CLX14} increases with $d$ and becomes significantly larger than the nominal level for $d \geq 400$ under all of the above models. It seems that the asymptotic extreme value distribution of this statistic is not adequate if the data dimension is much larger than the sample size. Since the test in \cite{CLX14} involves a computationally intensive optimization involving sample dispersion matrices, we could not implement this test using the permutation approach. The detailed results of the simulation study are provided in Appendix -- III. \\
\indent Multivariate Gaussian distributions with dispersion matrices of the form $(1-\beta)I_{d} + \beta{\bf 1}_{d}{\bf 1}_{d}'$ for some $\beta \in (0,1)$, where ${\bf 1}_{d}$ denotes the $d$-dimensional vector of one's, are neither $\rho$-mixing nor have the RSRM property. Recently, \cite{KK14} mentioned that for such probability models for high dimensional data, the size of test based on $T_{CQ}^{(2)}$ would be asymptotically incorrect. To compare the performance of the tests based on $T_{WMW}$ and $T_{CQ}^{(2)}$ for such models, we have chosen $\beta = 0.7$, $m=n=20$ and used the permutation implementations of these tests. The mean shifts chosen are $\mu = (c,0,0,\ldots,0)$ with $c = 2.5,5,7.5,10,12.5$ for $d = 100,200,400,800,1600$, respectively. We found that the test based on $T_{WMW}$ significantly outperforms the test based on $T_{CQ}^{(2)}$ for all values of $d$ (see Appendix -- III).

\section*{Appendix -- I}
\begin{proof}[Proof of Theorem \ref{thm-new000}]
Without any loss of generality, we can take $E({\bf X}_{1}) = {\bf 0}$. Let us write ${\bf X}_{i} = (X_{i1},X_{i2},\ldots,X_{id})'$, $1 \leq i \leq m$, and ${\bf Y}_{j} = (Y_{j1},Y_{j2},\ldots,Y_{jd})'$, $1 \leq j \leq n$. First note that
\begin{eqnarray}
||{\bf X} - {\bf Y}||^{2} &=& [||{\bf X}||^{2} + ||{\bf Y} - \mu||^{2} - 2{\bf X}'({\bf Y} - \mu) + 2\mu'({\bf X} - {\bf Y} + \mu) + ||\mu||^{2}] \nonumber \\
&=& \sum_{k=1}^{d} [V_{k}^{2} + W_{k}^{2} - 2V_{k}W_{k}] + 2\mu'({\bf V} - {\bf W}) + ||\mu||^{2},  \label{thm1a-eq1}
\end{eqnarray}
It follows from \citet[Theorem 5.2(b)]{Brad05} that for any function $h : \mathbb{R}^{2} \rightarrow \mathbb{R}$, the sequence $(h(V_{k},W_{k}) : k \geq 1)$ is $\rho$-mixing with its mixing coefficient bounded by $\max\{\rho_{1}(\cdot),\rho_{2}(\cdot)\}$. This fact, \eqref{thm1a-eq1} above along with Assumptions (C1)--(C3) and Theorem 8.2.2 in \cite{LL96} imply that for any given $\epsilon \in (0,1/2)$, we have
\begin{eqnarray}
||{\bf X} - {\bf Y}||^{2}/d - (\sigma_{1}^{2} + \sigma_{2}^{2}) = o(d^{-1/2 + \epsilon}) \label{eq1.0}
\end{eqnarray}
as $d \rightarrow \infty$ {\it almost surely}. Now,
\begin{eqnarray}
T_{WMW} &=& \frac{1}{(m)_{2}(n)_{2}} \sum_{i_{1} \neq i_{2}}\sum_{j_{1} \neq j_{2}} \frac{({\bf
X}_{i_{1}} - {\bf Y}_{j_{1}})'({\bf X}_{i_{2}} - {\bf Y}_{j_{2}})}{d(\sigma_{1}^{2}+\sigma_{2}^{2})} \nonumber \\
&& + \ \frac{1}{(m)_{2}(n)_{2}} \sum_{i_{1} \neq i_{2}}\sum_{j_{1} \neq j_{2}} \left[\frac{({\bf
X}_{i_{1}} - {\bf Y}_{j_{1}})'({\bf X}_{i_{2}} - {\bf Y}_{j_{2}})}{d(\sigma_{1}^{2}+\sigma_{2}^{2})} \times  \left\{\frac{d(\sigma_{1}^{2}+\sigma_{2}^{2})}{||{\bf X}_{i_{1}} - {\bf Y}_{j_{1}}||~||{\bf X}_{i_{2}} - {\bf Y}_{j_{2}}||} - 1\right\}\right] \nonumber \\
&=& (T_{CQ}^{(2)} + T_{WMW}^{(2)})/\{d(\sigma_{1}^{2}+\sigma_{2}^{2})\}, \label{eq1.1}
\end{eqnarray}
where $T_{CQ}^{(2)} = [(m)_{2}(n)_{2}]^{-1} \sum_{i_{1} \neq i_{2}}\sum_{j_{1} \neq j_{2}} ({\bf
X}_{i_{1}} - {\bf Y}_{j_{1}})'({\bf X}_{i_{2}} - {\bf Y}_{j_{2}})$ as defined in the Introduction, and
\begin{eqnarray*}
&& T_{WMW}^{(2)} = \frac{1}{(m)_{2}(n)_{2}} \sum_{i_{1} \neq i_{2}}\sum_{j_{1} \neq j_{2}} \left[\frac{({\bf
X}_{i_{1}} - {\bf Y}_{j_{1}})'({\bf X}_{i_{2}} - {\bf Y}_{j_{2}})}{d(\sigma_{1}^{2}+\sigma_{2}^{2})} \times \left\{\frac{d(\sigma_{1}^{2}+\sigma_{2}^{2})}{||{\bf X}_{i_{1}} - {\bf Y}_{j_{1}}||~||{\bf X}_{i_{2}} - {\bf Y}_{j_{2}}||} - 1\right\}\right].
\end{eqnarray*}
So, $E(T_{CQ}^{(2)}) = ||\mu||^{2}$. Further, it follows from \citet[p. 825]{CQ10} that $Var(T_{CQ}^{(2)}) = \Gamma_{1} + 4\mu'\Sigma_{1}\mu/m + 4\mu'\Sigma_{2}\mu/n$, where $\Gamma_{1} = 2{\mbox{tr}}(\Sigma_{1}^{2})/(m)_{2} + 2{\mbox{tr}}(\Sigma_{2}^{2})/(n)_{2} + 4{\mbox{tr}}(\Sigma_{1}\Sigma_{2})/(mn)$ is defined in the statement of the theorem. Note that $(\mu'\Sigma_{1}\mu/m) + (\mu'\Sigma_{2}\mu/n) \leq \mu'(\Sigma_{1} + \Sigma_{2})\mu/\min(m,n)$. Also, the denominator of each of the three terms in $\Gamma_{1}$ is less than $(N)_{2}$, where $N = \max(m,n)$. This implies that $\Gamma_{1} \geq [2{\mbox{tr}}(\Sigma_{1}^{2}) + 2{\mbox{tr}}(\Sigma_{2}^{2}) + 4{\mbox{tr}}(\Sigma_{1}\Sigma_{2})]/(N)_{2} = 2{\mbox{tr}}[(\Sigma_{1} + \Sigma_{2})^{2}]/(N)_{2}$. These facts and Assumption (C3) imply that $Var(T_{CQ}^{(2)}) = \Gamma_{1}(1+o(1))$ as $d \rightarrow \infty$. Further,
\begin{eqnarray*}
T_{CQ}^{(2)} - ||\mu||^{2} &=& \frac{1}{(m)_{2}(n)_{2}}\sum_{i_{1} \neq i_{2}} \sum_{j_{1} \neq j_{2}} ({\bf X}_{i_{1}} - {\bf Y}_{j_{1}} + \mu)'({\bf X}_{i_{2}} - {\bf Y}_{j_{2}} + \mu) - \frac{2}{mn} \sum_{i,j} \mu'({\bf X}_{i} - {\bf Y}_{j} + \mu) \\
&=& T_{1} - T_{2},
\end{eqnarray*}
where $T_{1} = [(m)_{2}(n)_{2}]^{-1}\sum_{i_{1} \neq i_{2}} \sum_{j_{1} \neq j_{2}} ({\bf X}_{i_{1}} - {\bf Y}_{j_{1}} + \mu)'({\bf X}_{i_{2}} - {\bf Y}_{j_{2}} + \mu)$ and $T_{2} = 2(mn)^{-1}\sum_{i,j} \mu'({\bf X}_{i} - {\bf Y}_{j} + \mu)$. It is easy to verify that $E(T_{2}) = 0$ and $Var(T_{2}) = 4\mu'[(\Sigma_{1}/m)+(\Sigma_{2}/n)]\mu$. So, using the inequality $\Gamma_{1} \geq 2{\mbox{tr}}[(\Sigma_{1} + \Sigma_{2})^{2}]/(N)_{2}$, Assumption (C3) and  Chebyshev's inequality, it follows that $T_{2}/\Gamma_{1}^{1/2}$ converges to zero {\it in probability} as $d \rightarrow \infty$. Note that
\begin{eqnarray*}
T_{1} = \frac{1}{(m)_{2}(n)_{2}} \sum_{k=1}^{d} \left\{ \sum_{i_{1} \neq i_{2}} \sum_{j_{1} \neq j_{2}} (V_{i_{1}k} - W_{j_{1}k})(V_{i_{2}k} - W_{j_{2}k}) \right\}. \\
\end{eqnarray*}
So, $E(T_{1}) = 0$ and $Var(T_{1}) = \Gamma_{1}$. This follows from computations similar to those used in deriving $Var(T_{CQ}^{(2)})$ earlier. Thus, by Theorem 4.0.1 in \cite{LL96} and Assumptions (C1) and (C2), we have the {\it weak convergence} of $T_{1}/\Gamma_{1}^{1/2}$ to a standard Gaussian distribution as $d \rightarrow \infty$ for each fixed $m,n \geq 1$. This and the fact that $T_{2}^{(2)}$ converges to zero {\it in probability} as $d \rightarrow \infty$ for each fixed $m,n \geq 1$ together imply that
\begin{eqnarray}
(T_{CQ}^{(2)} - ||\mu||^{2})/\Gamma_{1}^{1/2} \stackrel{{\cal L}}{\longrightarrow} N(0,1) \label{eq1.2}
\end{eqnarray}
as $d \rightarrow \infty$ for each fixed $m,n \geq 1$. Next, let us write
\begin{eqnarray}
T_{WMW}^{(2)}/\Gamma_{1}^{1/2} &=& \frac{1}{(m)_{2}(n)_{2}} \sum_{i_{1} \neq i_{2}} \sum_{j_{1} \neq j_{2}} \left[\frac{({\bf X}_{i_{1}} - {\bf Y}_{j_{1}})'({\bf X}_{i_{2}} - {\bf Y}_{j_{2}}) - ||\mu||^{2}}{\Gamma_{1}^{1/2}} \times \right. \nonumber \\
&& \hspace{2cm} \left.\left\{\frac{d(\sigma_{1}^{2}+\sigma_{2}^{2})}{||{\bf X}_{i_{1}} - {\bf Y}_{j_{1}}||~||{\bf X}_{i_{2}} - {\bf Y}_{j_{2}}||} - 1\right\}\right]  \nonumber \\
&& + \ \frac{||\mu||^{2}}{(m)_{2}(n)_{2}\Gamma_{1}^{1/2}} \sum_{i_{1} \neq i_{2}} \sum_{j_{1} \neq j_{2}} \left\{\frac{d(\sigma_{1}^{2}+\sigma_{2}^{2})}{||{\bf X}_{i_{1}} - {\bf Y}_{j_{1}}||~||{\bf X}_{i_{2}} - {\bf Y}_{j_{2}}||} - 1\right\} \nonumber \\
&=& T_{WMW}^{(3)} + T_{WMW}^{(4)}  \label{eq1.1.2},
\end{eqnarray}
where
\begin{eqnarray*}
&& T_{WMW}^{(3)} = \frac{1}{(m)_{2}(n)_{2}} \sum_{i_{1} \neq i_{2}} \sum_{j_{1} \neq j_{2}} \left[\frac{({\bf X}_{i_{1}} - {\bf Y}_{j_{1}})'({\bf X}_{i_{2}} - {\bf Y}_{j_{2}}) - ||\mu||^{2}}{\Gamma_{1}^{1/2}} \times \right. \\
&& \hspace{3cm}\left.\left\{\frac{d(\sigma_{1}^{2}+\sigma_{2}^{2})}{||{\bf X}_{i_{1}} - {\bf Y}_{j_{1}}||~||{\bf X}_{i_{2}} - {\bf Y}_{j_{2}}||} - 1\right\}\right] \ \ \mbox{and}\\
&& T_{WMW}^{(4)} = \frac{||\mu||^{2}}{(m)_{2}(n)_{2}\Gamma_{1}^{1/2}} \sum_{i_{1} \neq i_{2}} \sum_{j_{1} \neq j_{2}} \left\{\frac{d(\sigma_{1}^{2}+\sigma_{2}^{2})}{||{\bf X}_{i_{1}} - {\bf Y}_{j_{1}}||~||{\bf X}_{i_{2}} - {\bf Y}_{j_{2}}||} - 1\right\}.
\end{eqnarray*}
As mentioned earlier, $\Gamma_{1} \geq 2{\mbox{tr}}[(\Sigma_{1} + \Sigma_{2})^{2}]/(N)_{2}$. Also, from the stationarity of the sequences ${\cal X}$ and ${\cal Y}$ and using the Cauchy-Schwarz inequality, it follows that ${\mbox{tr}}[(\Sigma_{1}+\Sigma_{2})^{2}] \geq d(\sigma_{1}^{2}+\sigma_{2}^{2})^{2}$. These facts along with \eqref{eq1.0} and Assumption (C3) imply that each term inside the double summation appearing in the definition of $T_{WMW}^{(4)}$ converges to zero {\it in probability} as $d \rightarrow \infty$ for each fixed $m,n \geq 1$. So, $T_{WMW}^{(4)}$ converges to zero {\it in probability} as $d \rightarrow \infty$ for each fixed $m,n \geq 1$. \\
\indent Next, fix any $i_{1} \neq i_{2}$ and $j_{1} \neq j_{2}$ and consider the corresponding term inside the double summation appearing in the definition of $T_{WMW}^{(3)}$. It follows from \eqref{eq1.0} that $d(\sigma_{1}^{2}+\sigma_{2}^{2})/[||{\bf X}_{i_{1}} - {\bf Y}_{j_{1}}||~||{\bf X}_{i_{2}} - {\bf Y}_{j_{2}}||] - 1$ converges to zero {\it in probability} as $d \rightarrow \infty$. Also, note that
\begin{eqnarray}
&& ({\bf X}_{i_{1}} - {\bf Y}_{j_{1}})'({\bf X}_{i_{2}} - {\bf Y}_{j_{2}}) - ||\mu||^{2} = ({\bf X}_{i_{1}} - {\bf Y}_{j_{1}} + \mu)'({\bf X}_{i_{2}} - {\bf Y}_{j_{2}} + \mu) \label{thm1a-eq2} \\
&& \hspace{2cm}- \ \mu'({\bf X}_{i_{1}} - {\bf Y}_{j_{1}} + \mu) - \mu'({\bf X}_{i_{2}} - {\bf Y}_{j_{2}} + \mu). \nonumber
\end{eqnarray}
Using arguments similar to those used to prove the asymptotic normality of $T_{1}^{(2)}$ and using Theorem 4.0.1 in \cite{LL96}, it follows that the first term in the right hand side of \eqref{thm1a-eq2} is asymptotically Gaussian with zero mean and variance $2{\mbox{tr}}[(\Sigma_{1}+\Sigma_{2})^{2}]$ as $d \rightarrow \infty$. Using Assumption (C3) and Chebyshev's inequality, it follows that the second and the third terms in the right hand side of \eqref{thm1a-eq2} after scaling by $\Gamma_{1}^{1/2}$ converge to zero {\it in probability} as $d \rightarrow \infty$. So, the left hand side of \eqref{thm1a-eq2} after scaling by $\Gamma_{1}^{1/2}$ converges {\it weakly} to a Gaussian distribution as $d \rightarrow \infty$. Thus, $T_{WMW}^{(3)}$ converges to zero {\it in probability} as $d \rightarrow \infty$ for each fixed $m,n \geq 1$. This and the fact that $T_{WMW}^{(4)}$ converges to zero {\it in probability} as $d \rightarrow \infty$ together imply that $T_{WMW}^{(2)}/\Gamma_{1}^{1/2}$ converges to zero {\it in probability} as $d \rightarrow \infty$ for each fixed $m,n \geq 1$. Combining this fact with \eqref{eq1.1} and \eqref{eq1.2} yields
\begin{eqnarray}
\{d(\sigma_{1}^{2}+\sigma_{2}^{2})T_{WMW} - ||\mu||^{2}\}/\Gamma_{1}^{1/2} \stackrel{{\cal L}}{\rightarrow} N(0,1)  \label{eq1.2.1}
\end{eqnarray}
as $d \rightarrow \infty$ for each fixed $m,n \geq 1$.
\end{proof}

\begin{proof}[Proof of Theorem \ref{thm-new0}]
Let $\zeta_{\alpha}$ be the $(1-\alpha)$-quantile of the standard Gaussian distribution. Note that
\begin{eqnarray*}
\beta_{T_{WMW}}(\mu) &=& P\{d(\sigma_{1}^{2}+\sigma_{2}^{2})T_{WMW}/\Gamma_{1}^{1/2} > \zeta_{\alpha}\} \\
&=& P\{[d(\sigma_{1}^{2}+\sigma_{2}^{2})T_{WMW} - ||\mu||^{2}]/\Gamma_{1}^{1/2} > \zeta_{\alpha} - ||\mu||^{2}/\Gamma_{1}^{1/2}\}
\end{eqnarray*}
and
$$\beta_{T_{CQ}^{(2)}}(\mu) = P\{T_{CQ}^{(2)}/\Gamma_{1}^{1/2} > \zeta_{\alpha}\} = P\{(T_{CQ}^{(2)} - ||\mu||^{2})/\Gamma_{1}^{1/2} > \zeta_{\alpha} - ||\mu||^{2}/\Gamma_{1}^{1/2}\},$$
where the probabilities are computed under the alternative hypothesis. Since $\lim_{d \rightarrow \infty} ||\mu||^{2}/\Gamma_{1}^{1/2}$ exists, the equality of the asymptotic powers of the tests based on $T_{WMW}$ and $T_{CQ}^{(2)}$ follows from \eqref{eq1.2} and \eqref{eq1.2.1}. Moreover, their common value is $\Phi(-\zeta_{\alpha} + \lim_{d \rightarrow \infty}||\mu||^{2}/\Gamma_{1}^{1/2}) = \Phi(-\zeta_{\alpha} + c)$, which follows from the expressions of their powers and their asymptotic Gaussian distributions proved in Theorem \ref{thm-new000}. The last part of the present theorem now follows easily.
\end{proof}

\begin{proof}[Proof of Theorem \ref{thm-new2}]
(a) We will derive the asymptotic distribution of $T_{SR}$ and $T_{CQ}^{(1)}$ only, since the derivation of that of $T_{S}$ is simpler and follows from similar arguments. Using the assumptions in the theorem and the arguments similar to those in the proof of Theorem \ref{thm-new000}, we have
\begin{eqnarray}
T_{SR} &=& \frac{1}{(n)_{4}} \sum_{i_{1} \neq i_{2} \neq i_{3} \neq i_{4}} \frac{({\bf X}_{i_{1}} + {\bf X}_{i_{2}})'({\bf X}_{i_{3}} + {\bf X}_{i_{4}})}{2d\sigma^{2}} \nonumber
\\
&& + \ \frac{1}{(n)_{4}} \sum_{i_{1} \neq i_{2} \neq i_{3} \neq i_{4}} \left[\frac{({\bf X}_{i_{1}} + {\bf X}_{i_{2}})'({\bf X}_{i_{3}} + {\bf X}_{i_{4}})}{2d\sigma^{2}} \times \left\{\frac{2d\sigma^{2}}{||{\bf X}_{i_{1}} + {\bf X}_{i_{2}}||~||{\bf X}_{i_{3}} + {\bf X}_{4}||} - 1\right\}\right] \nonumber \\
&=& \frac{2}{(n)_{2}} \sum_{i_{1} \neq i_{2}} \frac{{\bf X}_{i_{1}}'{\bf X}_{i_{2}}}{d\sigma^{2}} + \frac{1}{(n)_{4}} \sum_{i_{1} \neq i_{2} \neq i_{3} \neq i_{4}} \left[\frac{({\bf X}_{i_{1}} + {\bf X}_{i_{2}})'({\bf X}_{i_{3}} + {\bf X}_{i_{4}})}{2d\sigma^{2}} \times \right. \label{eq5.1} \\
&& \hspace{2cm} \left.\left\{\frac{2d\sigma^{2}}{||{\bf X}_{i_{1}} + {\bf X}_{i_{2}}||~||{\bf X}_{i_{3}} + {\bf X}_{4}||} - 1\right\}\right] \nonumber
\end{eqnarray}
The first term in \eqref{eq5.1} equals $2T_{CQ}^{(1)}/(d\sigma^{2})$. Using Assumption (C4), it can be shown that $E(T_{CQ}^{(1)}) = ||\mu||^{2}$ and $Var(T_{CQ}^{(1)}) = \Gamma_{2}(1 + o(1))$ as $d \rightarrow \infty$. Using arguments similar to those in the proof of Theorem \ref{thm-new000}, we have the {\it weak convergence} of $(T_{CQ}^{(1)} - ||\mu||^{2})/\psi_{2}^{1/2}$ to a standard Gaussian distribution. Further, the second term in \eqref{eq5.1} after scaling by $\Gamma_{2}^{1/2}$ converges to zero {\it in probability} as $d \rightarrow \infty$ for each $n \geq 1$. The previous two statements together imply that $(d\sigma^{2}T_{SR} - 2||\mu||^{2})/\psi_{2}^{1/2}$ converges {\it weakly} to a $N(0,4)$ distribution as $d \rightarrow \infty$ for each $n \geq 1$. \\
(b) The proof of this part of the theorem follows from arguments similar to those used in the proof of Theorem \ref{thm-new0}.
\end{proof}

\begin{proof}[Proof of Theorem \ref{thm-new10}]
Without any loss of generality, we can take $\mu_{1} = {\bf 0}$, so that $\mu = \mu_{2}$. Let us write ${\bf X}_{i} = \widetilde{{\bf V}}_{i}$ and ${\bf Y}_{j} = \mu + \widetilde{{\bf W}}_{j}$, where $\widetilde{{\bf V}}_{i} = {\bf V}_{i}/P_{i}$ and $\widetilde{{\bf W}}_{j} = {\bf W}_{j}/Q_{j}$ for $1 \leq i \leq m$ and $i \leq j \leq n$. Let ${\bf V} = (V_{1},V_{2},\ldots,V_{d})'$ and ${\bf W} = (W_{1},W_{2},\ldots,W_{d})'$. Denote $\Sigma_{V} = Disp({\bf V})$, $\Sigma_{W} = Disp({\bf W})$, $\sigma_{V}^{2} = Var(V_{1})$ and $\sigma_{W}^{2} = Var(V_{2})$. \\
(a) We will first derive the asymptotic distribution of $T_{WMW}$. Using similar arguments as those used in proving \eqref{thm1a-eq1}, we get
\begin{eqnarray}
&& ||{\bf X} - {\bf Y}||^{2} = \sum_{k=1}^{d} \left[\frac{V_{k}^{2}}{P^{2}} + \frac{W_{k}^{2}}{Q^{2}} - \frac{2V_{k}W_{k}}{PQ}\right] + 2\mu'\left(\frac{{\bf V}}{P} - \frac{{\bf W}}{Q}\right) + ||\mu||^{2}.  \label{eq1}
\end{eqnarray}
Consider the event $E = \{||{\bf X} - {\bf Y}||^{2}/d - (\sigma_{V}^{2}/P^{2} + \sigma_{W}^{2}/Q^{2}) = o(d^{-1/2 + \epsilon}) ~\mbox{as}~d \rightarrow \infty\}$. It follows from \citet[Theorem 5.2(b)]{Brad05} that for any function $h : \mathbb{R}^{2} \rightarrow \mathbb{R}$, the sequence $(h(V_{k},W_{k}) : k \geq 1)$ is $\rho$-mixing with its mixing coefficient bounded by $\max\{\rho_{1}(\cdot),\rho_{2}(\cdot)\}$. Using this fact and \eqref{eq1} above along with the assumptions in the theorem and Theorem 8.2.2 in \cite{LL96}, we get that for any given $\epsilon \in (0,1/2)$,
\begin{eqnarray}
Pr(E | P, Q) = 1 \label{eq1-1}
\end{eqnarray}
for {\it almost every} $P$ and $Q$. Now,
\begin{eqnarray}
T_{WMW} &=& \frac{1}{(m)_{2}(n)_{2}} \sum_{i_{1} \neq i_{2}} \sum_{j_{1} \neq j_{2}}
\frac{({\bf X}_{i_{1}} - {\bf Y}_{j_{1}})'({\bf X}_{i_{2}} - {\bf Y}_{j_{2}})}{d(\sigma_{V}^{2}P_{i_{1}}^{-2} + \sigma_{W}^{2}Q_{j_{1}}^{-2})^{1/2}(\sigma_{V}^{2}P_{i_{2}}^{-2} + \sigma_{W}^{2}Q_{j_{2}}^{-2})^{1/2}}  \nonumber \\
&& + \ \frac{1}{(m)_{2}(n)_{2}} \sum_{i_{1} \neq i_{2}} \sum_{j_{1} \neq j_{2}} \left[
\frac{({\bf X}_{i_{1}} - {\bf Y}_{j_{1}})'({\bf X}_{i_{2}} - {\bf Y}_{j_{2}})}{d(\sigma_{V}^{2}P_{i_{1}}^{-2} + \sigma_{W}^{2}Q_{j_{1}}^{-2})^{1/2}(\sigma_{V}^{2}P_{i_{2}}^{-2} + \sigma_{W}^{2}Q_{j_{2}}^{-2})^{1/2}} \times \right.  \nonumber  \\
&& \left. \left\{\frac{d(\sigma_{V}^{2}P_{i_{1}}^{-2} + \sigma_{W}^{2}Q_{j_{1}}^{-2})^{1/2}(\sigma_{V}^{2}P_{i_{2}}^{-2} + \sigma_{W}^{2}Q_{j_{2}}^{-2})^{1/2}}{||{\bf X}_{i_{1}} - {\bf Y}_{j_{1}}||~||{\bf X}_{i_{2}} - {\bf Y}_{j_{2}}||} - 1\right\}\right].   \nonumber \\
&=& (T_{WMW}^{(1)} + T_{WMW}^{(2)})/d,  \label{eq2}
\end{eqnarray}
where
\begin{eqnarray*}
T_{WMW}^{(1)} = \frac{1}{(m)_{2}(n)_{2}} \sum_{i_{1} \neq i_{2}} \sum_{j_{1} \neq j_{2}}
\frac{({\bf X}_{i_{1}} - {\bf Y}_{j_{1}})'({\bf X}_{i_{2}} - {\bf Y}_{j_{2}})}{d(\sigma_{V}^{2}P_{i_{1}}^{-2} + \sigma_{W}^{2}Q_{j_{1}}^{-2})^{1/2}(\sigma_{V}^{2}P_{i_{2}}^{-2} + \sigma_{W}^{2}Q_{j_{2}}^{-2})^{1/2}}
\end{eqnarray*}
and
\begin{eqnarray*}
T_{WMW}^{(2)} = \frac{1}{(m)_{2}(n)_{2}} \sum_{i_{1} \neq i_{2}} \sum_{j_{1} \neq j_{2}} \left[
\frac{({\bf X}_{i_{1}} - {\bf Y}_{j_{1}})'({\bf X}_{i_{2}} - {\bf Y}_{j_{2}})}{d(\sigma_{V}^{2}P_{i_{1}}^{-2} + \sigma_{W}^{2}Q_{j_{1}}^{-2})^{1/2}(\sigma_{V}^{2}P_{i_{2}}^{-2} + \sigma_{W}^{2}Q_{j_{2}}^{-2})^{1/2}} \times \right.\\
\hspace{2cm} \left.\left\{\frac{d(\sigma_{V}^{2}P_{i_{1}}^{-2} + \sigma_{W}^{2}Q_{j_{1}}^{-2})^{1/2}(\sigma_{V}^{2}P_{i_{2}}^{-2} + \sigma_{W}^{2}Q_{j_{2}}^{-2})^{1/2}}{||{\bf X}_{i_{1}} - {\bf Y}_{j_{1}}||~||{\bf X}_{i_{2}} - {\bf Y}_{j_{2}}||} - 1\right\}\right].
\end{eqnarray*}
Some straightforward algebra yields
\begin{eqnarray}
T_{WMW}^{(1)} &=& \frac{1}{d(m)_{2}(n)_{2}} \left\{\sum_{i_{1} \neq i_{2}} A_{i_{1},i_{2}} {\bf X}_{i_{1}}'{\bf X}_{i_{2}} - 2 \sum_{i, j} C_{i,j} {\bf X}_{i}'{\bf Y}_{j} + \sum_{j_{1} \neq j_{2}} B_{j_{1},j_{2}} {\bf Y}_{j_{1}}'{\bf Y}_{j_{2}} \right\} \nonumber \\
&=& \frac{1}{d(m)_{2}(n)_{2}} \left\{\sum_{i_{1} \neq i_{2}} [P_{i_{1}}P_{i_{2}}]^{-1}A_{i_{1},i_{2}} {\bf V}_{i_{1}}'{\bf V}_{i_{2}} - \ 2\sum_{i, j} [P_{i}Q_{j}]^{-1}C_{i,j} {\bf V}_{i}'(Q_{j}\mu+{\bf W}_{j}) \right. \label{eq3} \\
&& + \left.\sum_{j_{1} \neq j_{2}} [Q_{j_{1}}Q_{j_{2}}]^{-1}B_{j_{1},j_{2}} (Q_{j_{1}}\mu+{\bf W}_{j_{1}})'(Q_{j_{2}}\mu+{\bf W}_{j_{2}}) \right\},  \nonumber
\end{eqnarray}
where $A_{i_{1},i_{2}} = \sum_{j_{1} \neq j_{2}} (\sigma_{V}^{2}P_{i_{1}}^{-2} + \sigma_{W}^{2}Q_{j_{2}}^{-2})^{-1/2}(\sigma_{V}^{2}P_{i_{2}}^{-2} + \sigma_{W}^{2}Q_{j_{1}}^{-2})^{-1/2}$, $B_{j_{1},j_{2}} = \sum_{i_{1} \neq i_{2}} (\sigma_{V}^{2}P_{i_{1}}^{-2} + \sigma_{W}^{2}Q_{j_{2}}^{-2})^{-1/2}(\sigma_{V}^{2}P_{i_{2}}^{-2} + \sigma_{W}^{2}Q_{j_{1}}^{-2})^{-1/2}$, $C_{i_{1},j_{1}} = \sum_{i_{2} \neq i_{1}} \sum_{j_{2} \neq j_{1}} (\sigma_{V}^{2}P_{i_{1}}^{-2} + \sigma_{W}^{2}Q_{j_{2}}^{-2})^{-1/2}(\sigma_{V}^{2}P_{i_{2}}^{-2} + \sigma_{W}^{2}Q_{j_{1}}^{-2})^{-1/2}$.
Define
\begin{eqnarray*}
S_{1} &=& \frac{1}{(m)_{2}(n)_{2}} \sum_{i_{1} \neq i_{2}} \sum_{j_{1} \neq j_{2}} (\sigma_{V}^{2}P_{i_{1}}^{-2} + \sigma_{V}^{2}Q_{j_{2}}^{-2})^{-1/2}(\sigma_{V}^{2}P_{i_{2}}^{-2} + \sigma_{W}^{2}Q_{j_{1}}^{-2})^{-1/2}.
\end{eqnarray*}
It follows from the expression of $T_{WMW}^{(1)}$ in \eqref{eq3} that $E(dT_{WMW}^{(1)}|P_{i}, Q_{j}, 1 \leq i \leq m, 1 \leq j \leq n) = ||\mu||^{2}S_{1}$. Further, it can be shown that
\begin{eqnarray}
&& Var(dT_{WMW}^{(1)}|P_{i}, Q_{j}, 1 \leq i \leq m, 1 \leq j \leq n) \nonumber \\
&& = S_{2} + \frac{4}{[(m)_{2}(n)_{2}]^{2}} \sum_{i, j_{1} \neq j_{2}} \{\mu'\Sigma_{V}\mu\}P_{i}^{-2}C_{i,j_{1}}C_{i,j_{2}} \nonumber \\
&& \hspace{1cm} + \ \frac{1}{[(m)_{2}(n)_{2}]^{2}} \sum_{\substack{j_{1},j_{2},j_{3}\\all~distinct}} \left[ \{\mu'\Sigma_{W}\mu\}(P_{j_{1}}^{-2} + P_{j_{2}}^{-2})B_{j_{1},j_{2}}(2B_{j_{1},j_{2}} + B_{j_{1},j_{3}} + B_{j_{2},j_{3}}) \right], \nonumber \\
&& = S_{2} + \frac{1}{[(m)_{2}(n)_{2}]^{2}} \{L_{1}\mu'\Sigma_{V}\mu + L_{2}\mu'\Sigma_{W}\mu\},  \label{eq4}
\end{eqnarray}
where $L_{1} = 4\sum_{i, j_{1} \neq j_{2}} P_{i}^{-2}C_{i,j_{1}}C_{i,j_{2}}$ and $L_{2} = \sum (P_{j_{1}}^{-2} + P_{j_{2}}^{-2})B_{j_{1},j_{2}}(2B_{j_{1},j_{2}} + B_{j_{1},j_{3}} + B_{j_{2},j_{3}})$. Here, the latter summation is taken over distinct indices $j_{1}, j_{2}$ and $j_{3}$. Also,
\begin{eqnarray*}
S_{2} &=& \frac{1}{[(m)_{2}(n)_{2}]^{2}} \left\{2\sum_{i_{1} \neq i_{2}} [P_{i_{1}}P_{i_{2}}]^{-2}A_{i_{1},i_{2}}^{2} {\mbox{tr}}(\Sigma_{V}^{2}) \right. \nonumber \\
&& + \ \left. 2\sum_{j_{1} \neq j_{2}} [Q_{j_{1}}Q_{j_{2}}]^{-2}B_{j_{1},j_{2}}^{2} {\mbox{tr}}(\Sigma_{W}^{2}) + 4\sum_{i,j} [P_{i}Q_{j}]^{-2}C_{i,j}^{2} {\mbox{tr}}(\Sigma_{V}\Sigma_{W})\right\}, \nonumber \\
&=& \{L_{3}{\mbox{tr}}(\Sigma_{V}^{2}) + L_{4}{\mbox{tr}}(\Sigma_{W}^{2}) + 2L_{5}{\mbox{tr}}(\Sigma_{V}\Sigma_{W})\}/[(m)_{2}(n)_{2}]^{2},
\end{eqnarray*}
where $L_{3} = 2\sum_{i_{1} \neq i_{2}} [P_{i_{1}}P_{i_{2}}]^{-2}A_{i_{1},i_{2}}^{2}$, $L_{4} = 2\sum_{j_{1} \neq j_{2}} [Q_{j_{1}}Q_{j_{2}}]^{-2}B_{j_{1},j_{2}}^{2}$, and $L_{5} = 2\sum_{i,j} [P_{i}Q_{j}]^{-2}C_{i,j}^{2}$. Note that $(L_{1}\mu'\Sigma_{V}\mu + L_{2}\mu'\Sigma_{W}\mu) \leq \max\{L_{1},L_{2}\}\mu'(\Sigma_{V} + \Sigma_{W})\mu$. Also, $S_{2} \geq [(m)_{2}(n)_{2}]^{-2}\min\{L_{3},L_{4},$ $L_{5}\}[{\mbox{tr}}(\Sigma_{V}^{2}) + {\mbox{tr}}(\Sigma_{W}^{2}) + 2{\mbox{tr}}(\Sigma_{V}\Sigma_{W})] = [(m)_{2}(n)_{2}]^{-2}\min\{L_{3},L_{4},L_{5}\}{\mbox{tr}}[(\Sigma_{V} + \Sigma_{W})^{2}]$. These facts along with \eqref{eq4} and Assumption (C3) imply that $Var(dT_{WMW}^{(1)}|P_{i}, Q_{j}, 1 \leq i \leq m, 1 \leq j \leq n) = S_{2}(1+o(1))$ as $d \rightarrow \infty$. Now,
\begin{eqnarray}
&& (dT_{WMW}^{(1)} - ||\mu||^{2}S_{1})/S_{2}^{1/2} \nonumber \\
&& = \left[\frac{1}{(m)_{2}(n)_{2}} \sum_{i_{1} \neq i_{2}} \sum_{j_{1} \neq j_{2}}
\frac{({\bf X}_{i_{1}} - {\bf Y}_{j_{1}} + \mu)'({\bf X}_{i_{2}} - {\bf Y}_{j_{2}} + \mu)}{d(\sigma_{V}^{2}P_{i_{1}}^{-2} + \sigma_{W}^{2}Q_{j_{1}}^{-2})^{1/2}(\sigma_{V}^{2}P_{i_{2}}^{-2} + \sigma_{W}^{2}Q_{j_{2}}^{-2})^{1/2}} \right.\nonumber \\
&& \hspace{1cm}\left.- \ \frac{2}{(m)_{2}(n)_{2}} \sum_{i,j} C_{i,j}\mu'({\bf X}_{i} - {\bf Y}_{j} + \mu)\right]/S_{2}^{1/2} \nonumber \\
&& = (\widetilde{T}_{WMW}^{(1)} - \widetilde{T}_{WMW}^{(2)})/S_{2}^{1/2},  \label{eq5}
\end{eqnarray}
where
\begin{eqnarray*}
\widetilde{T}_{WMW}^{(1)} = \frac{1}{(m)_{2}(n)_{2}} \sum_{i_{1} \neq i_{2}} \sum_{j_{1} \neq j_{2}}
\frac{({\bf X}_{i_{1}} - {\bf Y}_{j_{1}} + \mu)'({\bf X}_{i_{2}} - {\bf Y}_{j_{2}} + \mu)}{d(\sigma_{V}^{2}P_{i_{1}}^{-2} + \sigma_{W}^{2}Q_{j_{1}}^{-2})^{1/2}(\sigma_{V}^{2}P_{i_{2}}^{-2} + \sigma_{W}^{2}Q_{j_{2}}^{-2})^{1/2}}
\end{eqnarray*}
and $\widetilde{T}_{WMW}^{(2)} = 2[(m)_{2}(n)_{2}]^{-1} \sum_{i,j} C_{i,j}\mu'({\bf X}_{i} - {\bf Y}_{j} + \mu)$. It can be shown that $E(\widetilde{T}_{WMW}^{(2)}|P_{i}, Q_{j}, 1 \leq i \leq m, 1 \leq j \leq n) = 0$ and
\begin{eqnarray*}
&& Var(\widetilde{T}_{WMW}^{(2)}|P_{i}, Q_{j}, 1 \leq i \leq m, 1 \leq j \leq n) = 4[(m)_{2}(n)_{2}]^{-2} \left\{\sum_{i,j_{1} \neq j_{2}} C_{i,j_{1}}C_{i,j_{2}}P_{i}^{-2}\mu'\Sigma_{V}\mu \right.\\
&& \hspace{3cm} + \ \left.\sum_{i,j} C_{i,j}^{2}\mu'(\Sigma_{V}/P_{i}^{2} + \Sigma_{W}/Q_{j}^{2})\mu + \sum_{i_{1} \neq i_{2}, j} C_{i_{1},j}C_{i_{2},j}Q_{j}^{-2}\mu'\Sigma_{W}\mu\right\}.
\end{eqnarray*}
So, using Assumption (C3) and arguments similar to those used earlier to show that $Var(dT_{WMW}^{(1)}|P_{i},$ $Q_{j}, 1 \leq i \leq m, 1 \leq j \leq n) = S_{2}(1+o(1))$ as $d \rightarrow \infty$, we get that $Var(\widetilde{T}_{WMW}^{(2)}|P_{i}, Q_{j}, 1 \leq i \leq m, 1 \leq j \leq n) = o(S_{2})$ as $d \rightarrow \infty$. Thus, Chebyshev's inequality implies that $\widetilde{T}_{WMW}^{(2)}/S_{2}^{1/2}$ converges to zero {\it in probability} as $d \rightarrow \infty$. \\
\indent Next note that
\begin{eqnarray*}
&& \widetilde{T}_{WMW}^{(1)} \\
&& = \frac{1}{(m)_{2}(n)_{2}} \sum_{i_{1} \neq i_{2}} \sum_{j_{1} \neq j_{2}} \left\{\frac{({\bf V}_{i_{1}}/P_{i_{1}} - {\bf W}_{j_{1}}/Q_{j_{1}})'({\bf V}_{i_{2}}/P_{i_{2}} - {\bf W}_{j_{2}}/Q_{j_{2}})}{d(\sigma_{V}^{2}P_{i_{1}}^{-2} + \sigma_{W}^{2}Q_{j_{1}}^{-2})^{1/2}(\sigma_{V}^{2}P_{i_{2}}^{-2} + \sigma_{W}^{2}Q_{j_{2}}^{-2})^{1/2}}\right\} \\
&& = \frac{1}{(m)_{2}(n)_{2}} \sum_{k=1}^{d} \sum_{i_{1} \neq i_{2}} \sum_{j_{1} \neq j_{2}} \left\{\frac{(Q_{j_{1}}V_{i_{1}k} - P_{i_{1}}W_{j_{1}k})(Q_{j_{2}}V_{i_{2}k} - P_{i_{2}}W_{j_{2}k})}{dP_{i_{1}}P_{i_{2}}Q_{j_{1}}Q_{j_{2}}(\sigma_{V}^{2}P_{i_{1}}^{-2} + \sigma_{W}^{2}Q_{j_{1}}^{-2})^{1/2}(\sigma_{V}^{2}P_{i_{2}}^{-2} + \sigma_{W}^{2}Q_{j_{2}}^{-2})^{1/2}}\right\}.
\end{eqnarray*}
It is easy to see that $E(\widetilde{T}_{WMW}^{(1)}|P_{i}, Q_{j}, 1 \leq i \leq m, 1 \leq j \leq n) = 0$. Further, from algebraic computations similar to those used earlier in deriving $Var(dT_{WMW}^{(1)}|P_{i}, Q_{j}, 1 \leq i \leq m, 1 \leq j \leq n)$, it can be shown that $Var(\widetilde{T}_{WMW}^{(1)}|P_{i}, Q_{j}, 1 \leq i \leq m, 1 \leq j \leq n) = S_{2}$. Thus, by Theorem 4.0.1 in \cite{LL96} and Assumption (C4), the conditional distribution of $\widetilde{T}_{WMW}^{(1)}/S_{2}^{1/2}$ given the $P_{i}$'s and the $Q_{j}$'s converges to a standard Gaussian distribution as $d \rightarrow \infty$. This fact along with \eqref{eq5} and the fact that conditionally on the $P_{i}$'s and the $Q_{j}$'s, $\widetilde{T}_{WMW}^{(2)}/S_{2}^{1/2}$ converges to zero {\it in probability}  as $d \rightarrow \infty$ yield
\begin{eqnarray}
\lim_{d \rightarrow \infty} P\{(dT_{WMW}^{(1)} - ||\mu||^{2}S_{1})/S_{2}^{1/2} \leq x\} = \Phi(x). \label{eq6}
\end{eqnarray}
\indent Next, let us write
\begin{eqnarray}
&& T_{WMW}^{(2)} \nonumber \\
&& = \frac{1}{(m)_{2}(n)_{2}} \sum_{i_{1} \neq i_{2}} \sum_{j_{1} \neq j_{2}} \left[
\frac{({\bf X}_{i_{1}} - {\bf Y}_{j_{1}})'({\bf X}_{i_{2}} - {\bf Y}_{j_{2}}) - ||\mu||^{2}}{(\sigma_{V}^{2}P_{i_{1}}^{-2} + \sigma_{W}^{2}Q_{j_{1}}^{-2})^{1/2}(\sigma_{V}^{2}P_{i_{2}}^{-2} + \sigma_{W}^{2}Q_{j_{2}}^{-2})^{1/2}} \times \right. \nonumber \\
&& \hspace{2cm}\left.\left\{\frac{d(\sigma_{V}^{2}P_{i_{1}}^{-2} + \sigma_{W}^{2}Q_{j_{1}}^{-2})^{1/2}(\sigma_{V}^{2}P_{i_{2}}^{-2} + \sigma_{W}^{2}Q_{j_{2}}^{-2})^{1/2}}{||{\bf X}_{i_{1}} - {\bf Y}_{j_{1}}||~||{\bf X}_{i_{2}} - {\bf Y}_{j_{2}}||} - 1\right\}\right] \nonumber \\
&& \hspace{0.5cm}+ \ \frac{||\mu||^{2}}{(m)_{2}(n)_{2}} \sum_{i_{1} \neq i_{2}} \sum_{j_{1} \neq j_{2}} \left[\frac{1}{(\sigma_{V}^{2}P_{i_{1}}^{-2} + \sigma_{W}^{2}Q_{j_{1}}^{-2})^{1/2}(\sigma_{V}^{2}P_{i_{2}}^{-2} + \sigma_{W}^{2}Q_{j_{2}}^{-2})^{1/2}} \times \right. \nonumber \\
&& \hspace{2cm}\left.\left\{\frac{d(\sigma_{V}^{2}P_{i_{1}}^{-2} + \sigma_{W}^{2}Q_{j_{1}}^{-2})^{1/2}(\sigma_{V}^{2}P_{i_{2}}^{-2} + \sigma_{W}^{2}Q_{j_{2}}^{-2})^{1/2}}{||{\bf X}_{i_{1}} - {\bf Y}_{j_{1}}||~||{\bf X}_{i_{2}} - {\bf Y}_{j_{2}}||} - 1\right\}\right] \nonumber \\
&& = \widetilde{T}_{WMW}^{(3)} + \widetilde{T}_{WMW}^{(4)}  \label{eq7},
\end{eqnarray}
where
\begin{eqnarray*}
\widetilde{T}_{WMW}^{(3)} = \frac{1}{(m)_{2}(n)_{2}} \sum_{i_{1} \neq i_{2}} \sum_{j_{1} \neq j_{2}} \left[\frac{({\bf X}_{i_{1}} - {\bf Y}_{j_{1}})'({\bf X}_{i_{2}} - {\bf Y}_{j_{2}}) - ||\mu||^{2}}{(\sigma_{V}^{2}P_{i_{1}}^{-2} + \sigma_{W}^{2}Q_{j_{1}}^{-2})^{1/2}(\sigma_{V}^{2}P_{i_{2}}^{-2} + \sigma_{W}^{2}Q_{j_{2}}^{-2})^{1/2}} \times \right. \nonumber \\
\hspace{2cm}\left.\left\{\frac{d(\sigma_{V}^{2}P_{i_{1}}^{-2} + \sigma_{W}^{2}Q_{j_{1}}^{-2})^{1/2}(\sigma_{V}^{2}P_{i_{2}}^{-2} + \sigma_{W}^{2}Q_{j_{2}}^{-2})^{1/2}}{||{\bf X}_{i_{1}} - {\bf Y}_{j_{1}}||~||{\bf X}_{i_{2}} - {\bf Y}_{j_{2}}||} - 1\right\}\right] \ \ \ \mbox{and} \\
\widetilde{T}_{WMW}^{(4)} = \frac{1}{(m)_{2}(n)_{2}} \sum_{i_{1} \neq i_{2}} \sum_{j_{1} \neq j_{2}} \left[\frac{||\mu||^{2}}{(\sigma_{V}^{2}P_{i_{1}}^{-2} + \sigma_{W}^{2}Q_{j_{1}}^{-2})^{1/2}(\sigma_{V}^{2}P_{i_{2}}^{-2} + \sigma_{W}^{2}Q_{j_{2}}^{-2})^{1/2}} \times \right. \nonumber \\
\hspace{2cm}\left.\left\{\frac{d(\sigma_{V}^{2}P_{i_{1}}^{-2} + \sigma_{W}^{2}Q_{j_{1}}^{-2})^{1/2}(\sigma_{V}^{2}P_{i_{2}}^{-2} + \sigma_{W}^{2}Q_{j_{2}}^{-2})^{1/2}}{||{\bf X}_{i_{1}} - {\bf Y}_{j_{1}}||~||{\bf X}_{i_{2}} - {\bf Y}_{j_{2}}||} - 1\right\}\right].
\end{eqnarray*}
As mentioned earlier, $S_{2} \geq [(m)_{2}(n)_{2}]^{-2}\min\{L_{3},L_{4},L_{5}\}{\mbox{tr}}[(\Sigma_{V} + \Sigma_{W})^{2}]$. Moreover, the stationarity of the sequences ${\cal X}$ and ${\cal Y}$ and the Cauchy-Schwarz inequality imply that ${\mbox{tr}}[(\Sigma_{V}+\Sigma_{W})^{2}] \geq d(\sigma_{V}^{2}+\sigma_{W}^{2})^{2}$. These facts along with \eqref{eq1-1} and Assumption (C3) imply that conditionally on the $P_{i}$'s and the $Q_{j}$'s, each term inside the double sum appearing in $\widetilde{T}_{WMW}^{(4)}$ above is $o_{P}(S_{2}^{1/2})$ as $d \rightarrow \infty$. So, $\widetilde{T}_{WMW}^{(4)}/S_{2}^{1/2}$ converges to zero {\it in probability} as $d \rightarrow \infty$. \\
\indent Next, fix any $i_{1} \neq i_{2}$ and $j_{1} \neq j_{2}$ and consider the corresponding term inside the double summation appearing in the expression of $\widetilde{T}_{WMW}^{(3)}$. It follows from \eqref{eq1-1} that $d(\sigma_{V}^{2}P_{i_{1}}^{-2} + \sigma_{W}^{2}Q_{j_{1}}^{-2})^{1/2}(\sigma_{V}^{2}P_{i_{2}}^{-2} + \sigma_{W}^{2}Q_{j_{2}}^{-2})^{1/2}/[||{\bf X}_{i_{1}} - {\bf Y}_{j_{1}}||~||{\bf X}_{i_{2}} - {\bf Y}_{j_{2}}||] - 1$ converges to zero {\it in probability} as $d \rightarrow \infty$. Also, note that
\begin{eqnarray}
&& ({\bf X}_{i_{1}} - {\bf Y}_{j_{1}})'({\bf X}_{i_{2}} - {\bf Y}_{j_{2}}) - ||\mu||^{2} = ({\bf X}_{i_{1}} - {\bf Y}_{j_{1}} + \mu)'({\bf X}_{i_{2}} - {\bf Y}_{j_{2}} + \mu) \label{eq8-new} \\
&& \hspace{4.8cm}- \ \mu'({\bf X}_{i_{1}} - {\bf Y}_{j_{1}} + \mu) - \mu'({\bf X}_{i_{2}} - {\bf Y}_{j_{2}} + \mu) \nonumber \\
&& \hspace{3cm}= \sum_{k=1}^{d} \left\{\frac{(Q_{j_{1}}V_{i_{1}k} - P_{i_{1}}W_{j_{1}k})(Q_{j_{2}}V_{i_{2}k} - P_{i_{2}}W_{j_{2}k})}{P_{i_{1}}P_{i_{2}}Q_{j_{1}}Q_{j_{2}}}\right\} \label{eq8} \\
&& - \ \mu'(Q_{j_{1}}{\bf V}_{i_{1}} - P_{i_{1}}{\bf W}_{j_{1}})\mu/(P_{i_{1}}Q_{j_{1}}) - \mu'(Q_{j_{2}}{\bf V}_{i_{2}} - P_{i_{2}}{\bf W}_{j_{2}})\mu/(P_{i_{2}}Q_{j_{2}}). \nonumber
\end{eqnarray}
It is easy to show that the conditional expectation of the first term in \eqref{eq8} given the $P_{i}$'s and the $Q_{j}$'s is zero, and its conditional variance is $v_{i_{1}i_{2}j_{1}j_{2}} = [P_{i_{1}}P_{i_{2}}]^{-2}{\mbox{tr}}(\Sigma_{V}^{2}) + [Q_{j_{1}}Q_{j_{2}}]^{-2}{\mbox{tr}}(\Sigma_{W}^{2}) + \{[P_{i_{1}}Q_{j_{2}}]^{-2} + [P_{i_{2}}Q_{j_{1}}]^{-2}\}{\mbox{tr}}(\Sigma_{V}\Sigma_{W})$. So, $v_{i_{1}i_{2}j_{1}j_{2}} = O({\mbox{tr}}[(\Sigma_{V} + \Sigma_{W})^{2}])$. Hence, using the fact that $S_{2} \geq [(m)_{2}(n)_{2}]^{-2}$ $\min\{L_{3},L_{4},L_{5}\}{\mbox{tr}}[(\Sigma_{V} + \Sigma_{W})^{2}]$ and Chebyshev's inequality, it follows that the first term in \eqref{eq8} after scaling by $S_{2}^{1/2}$ is bounded {\it in probability}, conditional on the $P_{i}$'s and the $Q_{j}$'s, as $d \rightarrow \infty$. Using Assumption (C3), Chebyshev's inequality and arguments similar to those used to prove the convergence {\it in probability} to zero of $\widetilde{T}_{WMW}^{(2)}$ earlier, we get that the second and the third terms in \eqref{eq8} after scaling by $S_{2}^{1/2}$ converge to zero {\it in probability} as $d \rightarrow \infty$. So, the left hand side of the equation \eqref{eq8-new} after scaling by $S_{2}^{1/2}$ is bounded {\it in probability}, conditional on the $P_{i}$'s and the $Q_{j}$'s, as $d \rightarrow \infty$. Thus, $\widetilde{T}_{WMW}^{(3)}/S_{2}^{1/2}$ converges to zero {\it in probability} as $d \rightarrow \infty$. This along with \eqref{eq7} and the fact that $\widetilde{T}_{WMW}^{(4)}/S_{2}^{1/2}$ converges to zero {\it in probability} as $d \rightarrow \infty$ together imply that $T_{WMW}^{(2)}/S_{2}^{1/2}$ converges to zero {\it in probability} as $d \rightarrow \infty$. Combining this fact with \eqref{eq6} and \eqref{eq2}, we get $\lim_{d \rightarrow \infty} P\{(dT_{WMW} - ||\mu||^{2}S_{1})/S_{2}^{1/2} \leq x|P_{i},Q_{j}, 1 \leq i \leq m, 1 \leq j \leq n\} = \Phi(x)$ for all $x \in \mathbb{R}$ and for each $m,n \geq 1$. Consequently,
\begin{eqnarray*}
\lim_{d \rightarrow \infty} P\{(dT_{WMW} - ||\mu||^{2}S_{1})/S_{2}^{1/2} \leq x\} = \Phi(x)
\end{eqnarray*}
for all $x \in \mathbb{R}$ and for each $m,n \geq 1$. \\
\indent We now derive the asymptotic distribution of $T_{CQ}^{(2)}$. As in the proof of Theorem \ref{thm-new000}, $T_{CQ}^{(2)} = T_{1} - T_{2}$. In the setup of the present theorem, $T_{1} = [(m)_{2}(n)_{2}]^{-1} \sum_{i_{1} \neq i_{2}} \sum_{j_{1} \neq j_{2}} ({\bf V}_{i_{1}}/P_{i_{1}} - {\bf W}_{j_{1}}/Q_{j_{1}})'({\bf V}_{i_{2}}/P_{i_{2}} - {\bf W}_{j_{2}}/Q_{j_{2}})'$ and $T_{2} = 2(mn)^{-1} \sum_{i, j} \mu'({\bf V}_{i}/P_{i} - {\bf W}_{j}/Q_{j})$. So, $E(T_{1}|P_{i},Q_{j}, 1 \leq i \leq m, 1 \leq j \leq n) = 0$. Further, from algebraic computations similar to those used to derive the variance of $T_{CQ}^{(2)}$ in the proof of Theorem \ref{thm-new000}, it follows that
\begin{eqnarray*}
Var(T_{1}|P_{i},Q_{j}, 1 \leq i \leq m, 1 \leq j \leq n) = \frac{1}{[(m)_{2}(n)_{2}]^{2}} \left\{2\sum_{i_{1} \neq i_{2}} [P_{i_{1}}P_{i_{2}}]^{-2}{\mbox{tr}(\Sigma_{V}^{2})} \right.\\
\hspace{2cm} + 2\sum_{j_{1} \neq j_{2}} [Q_{j_{1}}Q_{j_{2}}]^{-2}{\mbox{tr}(\Sigma_{W}^{2})} + \left.4\sum_{i, j} [P_{i}Q_{j}]^{-2}{\mbox{tr}(\Sigma_{V}\Sigma_{W})}\right\}.
\end{eqnarray*}
Define $S_{3} = Var(T_{1}|P_{i},Q_{j}, 1 \leq i \leq m, 1 \leq j \leq n)$. Also, $E(T_{2}|P_{i},Q_{j}, 1 \leq i \leq m, 1 \leq j \leq n) = 0$, and $Var(T_{2}|P_{i},Q_{j}, 1 \leq i \leq m, 1 \leq j \leq n) = o(S_{3})$ as $d \rightarrow \infty$ using the assumptions in the theorem. Thus, $T_{2}/S_{3}^{1/2}$ converges {\it in probability} to zero as $d \rightarrow \infty$. Further, using arguments similar to those used to prove the asymptotic Gaussianity of $\widetilde{T}_{WMW}^{(1)}$ above, it follows that the conditional distribution of $T_{1}/S_{3}^{1/2}$ given the $P_{i}$'s and the $Q_{j}$'s converges {\it weakly} to a standard Gaussian distribution as $d \rightarrow \infty$ for all $m, n \geq 1$. Combining these facts, we have 
\begin{eqnarray*}
\lim_{d \rightarrow \infty} P\{(T_{CQ}^{(2)} - ||\mu||^{2}/S_{3}^{1/2} \leq x\} = \Phi(x)
\end{eqnarray*}
for all $x \in \mathbb{R}$ and all $m,n \geq 1$. \vspace{0.05in}\\
(b) \indent Note that $S_{1}$ is a real valued $V$-statistic whose kernel $(\sigma_{V}^{2}P_{i_{1}}^{-2} + \sigma_{V}^{2}Q_{j_{2}}^{-2})^{-1/2}(\sigma_{V}^{2}P_{i_{2}}^{-2} + \sigma_{W}^{2}Q_{j_{1}}^{-2})^{-1/2}$ has finite expectation $\psi_{1} = E^{2}\{PQ/(\sigma_{V}^{2}Q^{2} + \sigma_{W}^{2}P^{2})^{1/2}\}$, by the assumption in the theorem. Thus, it follows that $S_{1}$ converges {\it almost surely} to $\psi_{1}$. Define $S_{21} = [(m)_{2}\{(n)_{2}\}^{2}]^{-1}L_{3}$, $S_{22} = [\{(m)_{2}\}^{2}(n)_{2}]^{-1}L_{4}$ and $S_{23} = [mn(m-1)^{2}(n-1)^{2}]^{-1}L_{5}$. Each of $S_{21}$, $S_{22}$ and $S_{23}$ is a real valued $V$-statistic whose kernel is bounded and thus has finite expectation. So, there exist $\psi_{21}$, $\psi_{22}$ and $\psi_{23}$ depending only on the distributions of $P$ and $Q$ such that $S_{21}$, $S_{22}$ and $S_{23}$ converge {\it almost surely} to $\psi_{21}$, $\psi_{22}$ and $\psi_{23}$, respectively. Here, $\psi_{21} = E^{2}\{Q_{1}Q_{2}/[(\sigma_{V}^{2}Q_{2}^{2} + \sigma_{W}^{2}P_{1}^{2})(\sigma_{V}^{2}Q_{2}^{2} + \sigma_{W}^{2}P_{1}^{2})]^{1/2}\}$, $\psi_{22} = E^{2}\{P_{1}P_{2}/[(\sigma_{V}^{2}Q_{1}^{2} + \sigma_{W}^{2}P_{1}^{2})(\sigma_{V}^{2}Q_{1}^{2} + \sigma_{W}^{2}P_{2}^{2})]^{1/2}\}$, and $\psi_{23} = [\psi_{21}\psi_{22}]^{1/2}$. Define $\psi_{2} = 2{\mbox{tr}}(\Sigma_{V}^{2})\psi_{21}/(m)_{2} + 2{\mbox{tr}}(\Sigma_{W}^{2})\psi_{22}/(n)_{2} + 4{\mbox{tr}}(\Sigma_{V}\Sigma_{W})\psi_{23}/(mn)$. Recall that $S_{2} = 2{\mbox{tr}}(\Sigma_{V}^{2})S_{21}/(m)_{2} + 2{\mbox{tr}}(\Sigma_{W}^{2})S_{22}/(n)_{2} + 4{\mbox{tr}}(\Sigma_{V}\Sigma_{W})S_{23}/(mn)$. Conditions (C1) and (C2) along with Theorem 2.1.5 in \cite{LL96} imply that both ${\cal V}$ and ${\cal W}$ possess continuous spectral densities. Now, the proof of Theorem 18.2.1 in \cite{IL71} implies that each of ${\mbox{tr}}(\Sigma_{V}^{2})$, ${\mbox{tr}}(\Sigma_{W}^{2})$ and ${\mbox{tr}}(\Sigma_{V}\Sigma_{W})$ equals a constant multiple of $d$ plus a remainder term, which is $o(d)$ as $d \rightarrow \infty$. Thus, for each fixed $m,n \geq 1$, there exist constants $A_{1}$, $A_{2}$ and $A_{3}$ such that with {\it probability one}
\begin{eqnarray}
\lim_{d \rightarrow \infty} \frac{\psi_{2}}{S_{2}} = \frac{2\psi_{21}A_{1}/(m)_{2} + 2\psi_{22}A_{2}/(n)_{2} + 4\psi_{23}A_{3}/(mn)}{2S_{21}A_{1}/(m)_{2} + 2S_{22}A_{2}/(n)_{2} + 4S_{23}A_{3}/(mn)} \label{eq13}
\end{eqnarray}
We denote the right hand side of \eqref{eq13} by $R_{m,n}$. Further, the assumption in the theorem and arguments preceding \eqref{eq13} imply that $||\mu||^{2}/\psi_{2}^{1/2}$ converges to a finite non-negative limit $b^{2}$ (say) as $d \rightarrow \infty$. Now,
\begin{eqnarray*}
&& \lim_{m,n \rightarrow \infty} \lim_{d \rightarrow \infty} P\left\{ \frac{dT_{WMW} - ||\mu||^{2}\psi_{1}}{\psi_{2}^{1/2}} \leq x\right\} \\
&=& \lim_{m,n \rightarrow \infty} \lim_{d \rightarrow \infty} P\left\{ \frac{dT_{WMW} - ||\mu||^{2}S_{1}}{S_{2}^{1/2}} \leq \frac{x\psi_{2}^{1/2}}{S_{2}^{1/2}} - \frac{||\mu||^{2}(S_{1}-\psi_{1})}{S_{2}^{1/2}}\right\} \\
&=& \lim_{m,n \rightarrow \infty} E\left[ \lim_{d \rightarrow \infty} P\left\{ \frac{dT_{WMW} - ||\mu||^{2}S_{1}}{S_{2}^{1/2}} \leq \frac{x\psi_{2}^{1/2}}{S_{2}^{1/2}} - \frac{||\mu||^{2}(S_{1}-\psi_{1})}{S_{2}^{1/2}} | P_{i}'s, Q_{j}'s\right\} \right] \\
&=& \lim_{m,n \rightarrow \infty} E\left[ \Phi\left(\lim_{d \rightarrow \infty} \frac{\psi_{2}^{1/2}}{S_{2}^{1/2}}\left\{x - (S_{1}-\psi_{1}) \lim_{d \rightarrow \infty}\frac{||\mu||^{2}}{\psi_{2}^{1/2}}\right\}\right) | P_{i}'s, Q_{j}'s\right]  \\
&=& E\left[ \lim_{m,n \rightarrow \infty} \Phi(R_{m,n}\{x - (S_{1}-\psi_{1})b^{2}\}) | P_{i}'s, Q_{j}'s\right] \ = \ \Phi(x),
\end{eqnarray*}
where the last equality above follows since $R_{m,n}$ converges to one and $S_{1} - \psi_{1}$ converges to zero  {\it almost surely} as $m,n \rightarrow \infty$. \\
\indent Note that $[(m)_{2}\{(n)_{2}\}^{2}]^{-1} \sum_{i_{1} \neq i_{2}} [P_{i_{1}}P_{i_{2}}]^{-2}$, $[(n)_{2}\{(m)_{2}\}^{2}]^{-1} \sum_{j_{1} \neq j_{2}} [Q_{j_{1}}Q_{j_{2}}]^{-2}$ and $[mn(m-1)^{2}(n-1)^{2}]^{-1} \sum_{i, j} [P_{i}Q_{j}]^{-2}$ appearing in the expression of $S_{3}$ converge to $E^{2}(P^{-2})$, $E^{2}(Q^{-2})$ and $E(P^{-2})E(Q^{-2})$, respectively, as $m,n \rightarrow \infty$. Also note that $\Sigma_{1} = Disp({\bf X}) = \Sigma_{V}E(P^{-2})$ and $\Sigma_{2} = Disp({\bf Y}) = \Sigma_{W}E(Q^{-2})$. So, arguing as in the case of $S_{2}$ above, we get that $S_{3}/\Gamma_{1}$ converges {\it in probability} to one as first $d \rightarrow \infty$ and then $m,n \rightarrow \infty$. Thus, it follows that $\lim_{m,n \rightarrow \infty} \lim_{d \rightarrow \infty} P\{(T_{CQ}^{(2)} - ||\mu||^{2})/\Gamma_{1}^{1/2} \leq x\} = \Phi(x)$ for all $x \in \mathbb{R}$.
\end{proof}

\begin{proof}[Proof of Theorem \ref{thm-new100}]
Since ${\bf Y}$ is distributed as ${\bf X} + \mu$, we have $\psi_{1} = \sigma_{V}^{-2}E^{2}\{PQ/(P^{2}+Q^{2})^{1/2}\}$ and $\psi_{2} = [\sigma_{V}^{2}E(P^{-2})]^{-2}E^{2}\{Q_{1}Q_{2}/[(P_{1}^{2}+Q_{1}^{2})^{1/2}(P_{1}^{2}+Q_{2}^{2})^{1/2}]\}\Gamma_{1}$. Here, $\psi_{1}$ and $\psi_{2}$ are as in the proof of Theorem \ref{thm-new10}. Since $\lim_{m,n \rightarrow \infty} \lim_{d \rightarrow \infty} ||\mu||^{2}/\Gamma_{2}^{1/2} = c$ for some $c \in (0,\infty)$, we have $\lim_{m,n \rightarrow \infty} \lim_{d \rightarrow \infty} \beta_{T_{CQ}^{(2)}}(\mu) = \Phi(-\zeta_{\alpha} + c)$, and
\begin{eqnarray*}
\lim_{m,n \rightarrow \infty} \lim_{d \rightarrow \infty} \beta_{T_{WMW}}(\mu) = \Phi\left(-\zeta_{\alpha} + \frac{cE(P^{-2})E^{2}\{PQ/(P^{2}+Q^{2})^{1/2}\}}{E\{Q_{1}Q_{2}/[(P_{1}^{2}+Q_{1}^{2})^{1/2}(P_{1}^{2}+Q_{2}^{2})^{1/2}]\}}\right).
\end{eqnarray*}
Now, $E^{2}\{Q_{1}Q_{2}/[(P_{1}^{2} + Q_{1}^{2})^{1/2}(P_{1}^{2} + Q_{2}^{2})^{1/2}]\} = E[E^{2}\{Q_{1}/(P_{1}^{2} + Q_{1}^{2})^{1/2}|P_{1}\}] < E[E\{Q_{1}^{2}/(P_{1}^{2} + Q_{1}^{2})|P_{1}\}] = E\{Q_{1}^{2}/(P_{1}^{2} + Q_{1}^{2})\} = 1/2$. Here, the inequality can be obtained using Jensen's inequality. Further, $E^{2}\{PQ/(P^{2} + Q^{2})^{1/2}\} > E^{-2}\{(P^{2} + Q^{2})^{1/2}/PQ\} > E^{-1}\{(P^{2} + Q^{2})/P^{2}Q^{2}\} = 1/\{E(P^{-2}) + E(Q^{-2})\} = [2E(P^{-2})]^{-1}$. Here, the inequalities follow from Cauchy-Schwarz inequality. Combining the previous two inequalities, we get $\lim_{m,n \rightarrow \infty} \lim_{d \rightarrow \infty} \beta_{T_{WMW}}(\mu) > \lim_{m,n \rightarrow \infty} \lim_{d \rightarrow \infty} \beta_{T_{CQ}^{(2)}}(\mu)$.
\end{proof}

\begin{proof}[Proof of Theorem \ref{thm-new3}]
(a) Let us consider the conditional distribution of $T_{SR}$ given $P_{1},P_{2},\ldots,P_{n}$. By definition,
\begin{eqnarray*}
 T_{SR} = \frac{1}{(n)_{4}} \sum_{\substack{i_{1}, i_{2}, i_{3}, i_{4}\\all~distinct}}
\frac{(P_{i_{2}}{\bf V}_{i_{1}} + P_{i_{1}}{\bf V}_{i_{2}} +
2{\mu}P_{i_{1}}P_{i_{2}})'(P_{i_{4}}{\bf V}_{i_{3}} + P_{i_{3}}{\bf V}_{i_{4}} +
2{\mu}P_{i_{3}}P_{i_{4}})}{||P_{i_{2}}{\bf V}_{i_{1}} + P_{i_{1}}{\bf V}_{i_{2}} +
2{\mu}P_{i_{1}}P_{i_{2}}||~||P_{i_{4}}{\bf V}_{i_{3}} + P_{i_{3}}{\bf V}_{i_{4}} +
2{\mu}P_{i_{3}}P_{i_{4}}||}.
\end{eqnarray*}
Consider the event $F = \{d^{-1}||P_{2}{\bf V}_{1} + P_{1}{\bf V}_{2} + 2{\mu}P_{1}P_{2}||^{2} - \sigma_{V}^{2}(P_{1}^{2} + P_{2}^{2}) = o(d^{-1/2 + \epsilon}) ~\mbox{as}~d \rightarrow \infty\}$. From Theorem 8.2.2 in \citet{LL96} and the assumptions in the theorem, it follows that for any given $\epsilon \in (0,1/2)$, $Pr(F | P_{1}, P_{2}) = 1$ for {\it almost every} $P_{1}$ and $P_{2}$. Let us rewrite $T_{SR}$ as
\begin{eqnarray}
&& T_{SR} \nonumber \\
&& = \frac{1}{(n)_{4}} \sum_{i_{1} \neq i_{2} \neq i_{3} \neq i_{4}}
\frac{(P_{i_{2}}{\bf V}_{i_{1}} + P_{i_{1}}{\bf V}_{i_{2}} +
2{\mu}P_{i_{1}}P_{i_{2}})'(P_{i_{4}}{\bf V}_{i_{3}} + P_{i_{3}}{\bf V}_{i_{4}} +
2{\mu}P_{i_{3}}P_{i_{4}})}{d\sigma_{V}^{2}(P_{i_{1}}^{2} + P_{i_{2}}^{2})^{1/2}(P_{i_{3}}^{2} +
P_{i_{4}}^{2})^{1/2}} \nonumber \\
&& + \frac{1}{(n)_{4}} \sum_{i_{1} \neq i_{2} \neq i_{3} \neq i_{4}} \left[
\frac{(P_{i_{2}}{\bf V}_{i_{1}} + P_{i_{1}}{\bf V}_{i_{2}} +
2{\mu}P_{i_{1}}P_{i_{2}})'(P_{i_{4}}{\bf V}_{i_{3}} + P_{i_{3}}{\bf V}_{i_{4}} +
2{\mu}P_{i_{3}}P_{i_{4}})}{d\sigma_{V}^{2}(P_{i_{1}}^{2} + P_{i_{2}}^{2})^{1/2}(P_{i_{3}}^{2} +
P_{i_{4}}^{2})^{1/2}}\right. \nonumber \\
&& \times \left.\left\{\frac{d\sigma_{V}^{2}(P_{i_{1}}^{2} + P_{i_{2}}^{2})^{1/2}(P_{i_{3}}^{2}
+ P_{i_{4}}^{2})^{1/2}}{||P_{i_{2}}{\bf V}_{i_{1}} + P_{i_{1}}{\bf V}_{i_{2}} +
2{\mu}P_{i_{1}}P_{i_{2}}||~||P_{i_{4}}{\bf V}_{i_{3}} + P_{i_{3}}{\bf V}_{i_{4}} +
2{\mu}P_{i_{3}}P_{i_{4}}||} - 1\right\}\right] \nonumber \\
&& \hspace{2cm} = (T_{SR}^{(1)} + T_{SR}^{(2)})/d, \label{eq4.1.1}
\end{eqnarray}
where
\begin{eqnarray*}
T_{SR}^{(1)} = \frac{1}{(n)_{4}} \sum_{i_{1} \neq i_{2} \neq i_{3} \neq i_{4}}
\frac{(P_{i_{2}}{\bf V}_{i_{1}} + P_{i_{1}}{\bf V}_{i_{2}} +
2{\mu}P_{i_{1}}P_{i_{2}})'(P_{i_{4}}{\bf V}_{i_{3}} + P_{i_{3}}{\bf V}_{i_{4}} +
2{\mu}P_{i_{3}}P_{i_{4}})}{\sigma_{V}^{2}(P_{i_{1}}^{2} + P_{i_{2}}^{2})^{1/2}(P_{i_{3}}^{2} +
P_{i_{4}}^{2})^{1/2}}
\end{eqnarray*}
and
\begin{eqnarray*}
T_{SR}^{(2)} = \frac{1}{(n)_{4}} \sum_{i_{1} \neq i_{2} \neq i_{3} \neq i_{4}} \left[
\frac{(P_{i_{2}}{\bf V}_{i_{1}} + P_{i_{1}}{\bf V}_{i_{2}} +
2{\mu}P_{i_{1}}P_{i_{2}})'(P_{i_{4}}{\bf V}_{i_{3}} + P_{i_{3}}{\bf V}_{i_{4}} +
2{\mu}P_{i_{3}}P_{i_{4}})}{\sigma_{V}^{2}(P_{i_{1}}^{2} + P_{i_{2}}^{2})^{1/2}(P_{i_{3}}^{2} +
P_{i_{4}}^{2})^{1/2}}\right. \\
\times \left.\left\{\frac{d\sigma_{V}^{2}(P_{i_{1}}^{2} + P_{i_{2}}^{2})^{1/2}(P_{i_{3}}^{2}
+ P_{i_{4}}^{2})^{1/2}}{||P_{i_{2}}{\bf V}_{i_{1}} + P_{i_{1}}{\bf V}_{i_{2}} +
2{\mu}P_{i_{1}}P_{i_{2}}||~||P_{i_{4}}{\bf V}_{i_{3}} + P_{i_{3}}{\bf V}_{i_{4}} +
2{\mu}P_{i_{3}}P_{i_{4}}||} - 1\right\}\right].
\end{eqnarray*}
Recall that a similar decomposition of $T_{SR}$ was obtained in the proof of Theorem \ref{thm-new2}. The proof of the asymptotic Gaussianity of $T_{SR}$ follows from the ideas used to prove the asymptotic Gaussianity of $T_{WMW}$ in Theorem \ref{thm-new10}, and the details are provided in Appendix -- II. $Z_{2}$ and $Z_{3}$ appearing in the asymptotic Gaussian distribution of $T_{SR}$ are given by $Z_{2} = 2[(n)_{4}\sigma_{V}^{2}]^{-1} \sum_{i_{1} \neq i_{2}} \widetilde{U}_{i_{1},i_{2}}P_{i_{1}}P_{i_{2}}$ and $Z_{3} = 8{\mbox{tr}}(\Sigma_{V}^{2})[(n)_{4}\sigma_{V}^{2}]^{-2}\sum_{i_{1} \neq i_{2}} \widetilde{U}_{i_{1},i_{2}}^{2}$, where $\widetilde{U}_{i_{1},i_{2}} = \sum_{i_{3} \neq i_{4} \neq i_{1} \neq i_{2}} P_{i_{3}}P_{i_{4}}/[(P_{i_{1}}^{2} + P_{i_{3}}^{2})^{1/2}(P_{i_{2}}^{2} + P_{i_{4}}^{2})^{1/2}]$. \\
\indent The proof of the asymptotic Gaussianity of $T_{S}$ will follow from arguments similar to those used to prove the asymptotic Gaussianity of $T_{SR}$, and we skip the details. $Z_{1}$ and $\Gamma_{3}$ in the asymptotic distribution of $T_{S}$ are given by $Z_{1} = [(n)_{2}\sigma_{V}^{2}]^{-1} \sum_{i_{1} \neq i_{2}} P_{i_{1}}P_{i_{2}}$ and $\Gamma_{3} = 2{\mbox{tr}(\Sigma_{V}^{2})}/[(n)_{2}\sigma_{V}^{4}]$. \\
\indent The proof of the asymptotic Gaussianity of $T_{CQ}^{(1)}$ is also provided in Appendix -- II, and $Z_{4}$ appearing in its asymptotic distribution is given by $Z_{4} = 2{\mbox{tr}(\Sigma_{V}^{2})}[(n)_{2}]^{-2}\sum_{i_{1} \neq i_{2}} [P_{i_{1}}P_{i_{2}}]^{-2}$. \vspace{0.05in}\\
(b) Observe that $Z_{1}$, $Z_{2}$, $(n)_{4}Z_{3}$ and $(n)_{2}Z_{4}$ are real-valued $V$-statistics, whose kernels have finite expectations by the assumption in part (b) of the theorem. So, they converge {\it almost surely} as $n \rightarrow \infty$. The corresponding limits are $\theta_{1} = E^{2}(P_{1})/\sigma_{V}^{2}$, $\theta_{2} = E^{2}\{P_{1}P_{2}/(P_{1}^{2} + P_{2}^{2})^{1/2}\}/\sigma_{V}^{2}$, $\theta_{3} = {\mbox{tr}(\Sigma_{V}^{2})}E^{2}\{P_{2}P_{3}/(P_{1}^{2} + P_{2}^{2})^{1/2}(P_{1}^{2} + P_{3}^{2})^{1/2}\}/\sigma_{V}^{4}$ and $\theta_{4} = 2{\mbox{tr}(\Sigma_{V}^{2})}E^{2}(P_{1}^{-2})$. Note that since $E(P^{-2})$ is finite, we have $\Sigma = Disp({\bf X}) = \Sigma_{V}E(P^{-2})$ and $\sigma^{2} = Var(X_{1}) = \sigma_{V}^{2}E(P^{-2})$. So, $\theta_{4} = 2{\mbox{tr}(\Sigma^{2})}$. Arguments similar to those used in the proof of part (b) of Theorem \ref{thm-new10} complete the proof of part (b) of the present theorem. \vspace{0.05in}\\
(c) Suppose that $\lim_{n \rightarrow \infty} \lim_{d \rightarrow \infty} ||\mu||^{2}/\Gamma_{2}^{1/2} = c$ for some $c \in (0,\infty)$. Then, \begin{eqnarray*}
&& \lim_{n \rightarrow \infty} \lim_{d \rightarrow \infty} \beta_{T_{S}}(\mu) = \Phi(-\zeta_{\alpha} + cE^{2}(P_{1})E(P_{1}^{-2})), \\
&& \lim_{n \rightarrow \infty} \lim_{d \rightarrow \infty} \beta_{T_{SR}}(\mu) = \Phi(-\zeta_{\alpha} + \frac{cE^{2}\{P_{1}P_{2}/(P_{1}^{2} + P_{2}^{2})^{1/2}\}E(P_{1}^{-2})}{E\{P_{2}P_{3}/[(P_{1}^{2} + P_{2}^{2})^{1/2}(P_{1}^{2} + P_{3}^{2})^{1/2}]\}}), \\
&& \lim_{n \rightarrow \infty} \lim_{d \rightarrow \infty} \beta_{T_{CQ}^{(1)}}(\mu) = \Phi(-\zeta_{\alpha} + c).
\end{eqnarray*}
Now, from Jensen's inequality, we have $E^{2}(P_{1}) > E^{-2}(P_{1}^{-1}) > E^{-1}(P_{1}^{-2})$, which implies that $E^{2}(P_{1})E(P_{1}^{-2}) > 1$. Thus, $\lim_{n \rightarrow \infty} \lim_{d \rightarrow \infty} \beta_{T_{S}}(\mu) > \lim_{n \rightarrow \infty} \lim_{d \rightarrow \infty} \beta_{T_{CQ}^{(1)}}(\mu)$. The proof of the other part of the theorem is similar to the proof of Theorem \ref{thm-new100}.
\end{proof}

\section*{Appendix -- II}
\subsection*{Additional mathematical details related to the proof of part (a) of Theorem \ref{thm-new3}}
\indent Here, we provide more details related to the derivations of the asymptotic distributions of $T_{SR}$, $T_{S}$ and $T_{CQ}^{(1)}$ under the assumptions of Theorem \ref{thm-new3}. Recall that $T_{SR} = (T_{SR}^{(1)} + T_{SR}^{(2)})/d$, where $T_{SR}^{(1)}$ and $T_{SR}^{(2)}$ are defined in the proof of Theorem \ref{thm-new3} in Appendix -- I. Define $\widetilde{U}_{i_{1},i_{2}} = \sum_{i_{3} \neq i_{4} \neq i_{1} \neq i_{2}} P_{i_{3}}P_{i_{4}}/[(P_{i_{1}}^{2} + P_{i_{3}}^{2})^{1/2}(P_{i_{2}}^{2} + P_{i_{4}}^{2})^{1/2}]$. Then, by the definition of $T_{SR}^{(1)}$, we have
\begin{eqnarray*}
T_{SR}^{(1)} &=& \frac{4}{(n)_{4}\sigma_{V}^{2}} \sum_{i_{1} \neq i_{2}} \left\{\widetilde{U}_{i_{1},i_{2}}{\bf W}_{i_{1}}'{\bf W}_{i_{2}} + 2\widetilde{U}_{i_{1},i_{2}}P_{i_{2}}\mu'{\bf W}_{i_{1}} + ||\mu||^{2}\widetilde{U}_{i_{1},i_{2}}P_{i_{1}}P_{i_{2}}\right\}.
\end{eqnarray*}
It follows easily that $E(T_{SR}^{(1)}|P_{i}, 1 \leq i \leq n) = 4||\mu||^{2}[(n)_{4}\sigma_{V}^{2}]^{-1} \sum_{i_{1} \neq i_{2}} \widetilde{U}_{i_{1},i_{2}}P_{i_{1}}P_{i_{2}}$. Set $Z_{2} = 2[(n)_{4}\sigma_{V}^{2}]^{-1} \sum_{i_{1} \neq i_{2}} \widetilde{U}_{i_{1},i_{2}}P_{i_{1}}P_{i_{2}}$. Further, it can be shown using the assumptions in the theorem that $Var(T_{SR}^{(1)}|P_{i}, 1 \leq i \leq n) = 32{\mbox{tr}}(\Sigma_{V}^{2})[(n)_{4}\sigma_{V}^{2}]^{-2}$ $\sum_{i_{1} \neq i_{2}} \widetilde{U}_{i_{1},i_{2}}^{2}(1 + o(1))$ as $d \rightarrow \infty$. Let $Z_{3} = 8{\mbox{tr}}(\Sigma_{V}^{2})[(n)_{4}\sigma_{V}^{2}]^{-2}\sum_{i_{1} \neq i_{2}} \widetilde{U}_{i_{1},i_{2}}^{2}$. So, using arguments similar to those used to prove the asymptotic Gaussianity of $T_{WMW}^{(1)}$ in the proof of Theorem \ref{thm-new10}, it follows that $(T_{SR}^{(1)} - 2||\mu||^{2}Z_{2})/(2Z_{3}^{1/2})$ converges {\it weakly} to a standard Gaussian distribution as $d \rightarrow \infty$ for each $n \geq 1$. Moreover, using arguments similar to those used to prove the convergence {\it in probability} of $T_{WMW}^{(2)}$ in the proof of Theorem \ref{thm-new10}, it follows that $T_{SR}^{(2)}/Z_{3}^{1/2}$ converges to zero {\it in probability} as $d \rightarrow \infty$ for each $n \geq 1$. This fact along with the equation $T_{SR} = (T_{SR}^{(1)} + T_{SR}^{(2)})/d$ and the asymptotic Gaussianity of $T_{SR}^{(1)}$ yields
\begin{eqnarray*}
 \lim_{d \rightarrow \infty} P\{(dT_{SR} - 2||\mu||^{2}Z_{2})/(2Z_{3}^{1/2}) \leq x\} = \Phi(x)
\end{eqnarray*}
for all $x \in \mathbb{R}$ and each $n \geq 1$. Here, $\Phi$ is the standard Gaussian cumulative distribution function. \\
\indent Using very similar arguments as above, we get that
\begin{eqnarray*}
\lim_{d \rightarrow \infty} P\{(dT_{S} - ||\mu||^{2}Z_{1})/\Gamma_{3}^{1/2} \leq x\} = \Phi(x)
\end{eqnarray*}
for all $x \in \mathbb{R}$ and each $n \geq 1$, where $Z_{1} = [(n)_{2}\sigma_{V}^{2}]^{-1} \sum_{i_{1} \neq i_{2}} P_{i_{1}}P_{i_{2}}$, and $\Gamma_{3} = 2{\mbox{tr}(\Sigma_{V}^{2})}/[(n)_{2}\sigma_{V}^{4}]$. \\
\indent Next, consider the conditional distribution of $T_{CQ}^{(1)}$ given the $P_{i}$'s, and note that
\begin{eqnarray*}
T_{CQ}^{(1)} &=& \frac{1}{(n)_{2}} \sum_{i_{1} \neq i_{2}} \frac{({\bf V}_{i_{1}} + {\mu}P_{i_{1}})'({\bf V}_{i_{2}} + {\mu}P_{i_{2}})}{P_{i_{1}}P_{i_{2}}} \\
&=& \frac{1}{(n)_{2}} \sum_{i_{1} \neq i_{2}} \frac{{\bf V}_{i_{1}}'{\bf V}_{i_{2}}}{P_{i_{1}}P_{i_{2}}} + \frac{2}{n} \sum_{i} \frac{\mu'{\bf V}_{i}}{P_{i}} + ||\mu||^{2}.
\end{eqnarray*}
So, $E(T_{CQ}^{(1)}|P_{i}, 1 \leq i \leq n) = ||\mu||^{2}$, and $Var(T_{CQ}^{(1)}|P_{i}, 1 \leq i \leq n) = Z_{4}(1+o(1))$ as $d \rightarrow \infty$, where $Z_{4} = 2{\mbox{tr}(\Sigma_{V}^{2})}[(n)_{2}]^{-2}\sum_{i_{1} \neq i_{2}} [P_{i_{1}}P_{i_{2}}]^{-2}$. Using the assumptions in the theorem, it follows that conditional on the $P_{i}$'s, $\sum_{i} \mu'{\bf V}_{i}/P_{i} = o_{P}(Z_{4}^{1/2})$ as $d \rightarrow \infty$. Thus, we get
\begin{eqnarray*}
\lim_{d \rightarrow \infty} P\{(T_{CQ}^{(1)} - ||\mu||^{2})/Z_{4}^{1/2} \leq x\} = \Phi(x)
\end{eqnarray*}
for all $x \in \mathbb{R}$ and each $n \geq 1$.

\section*{Appendix -- III}
\subsection*{Detailed results of the simulation study done in Section \ref{4}}
\indent Here, we present the results on the sizes and the powers of the tests based on $T_{SKK}$ \citep{SKK13} and $T_{GCBL}$ \citep{GCBL14} discussed in Section \ref{4}. We also present the sizes and the powers of the test in \cite{CLX14} for which the test statistic is denoted by $T_{CLX}$. Table \ref{tab1} reports the sizes of these tests implemented using the asymptotic approximations given in their original papers under the models considered in subsections \ref{2.1} and \ref{3.1} of our paper. We also report the sizes of the tests implemented using the permutation distributions of these test statistics.

\begin{table*}[ht!]
\centering
\caption{Sizes of the tests based on $T_{SKK}$, $T_{GCBL}$ and $T_{CLX}$ under some simulated models}
\label{tab1}
\begin{tabular}{ccccc}
\hline
   &  & $AR(1)$ with & $AR(1)$ with & spherical $t(5)$ \\
   &  & Gaussian innovation & $t(5)$ innovation & distribution \\
Test & $d$ &   &   &  \\
\hline
 & 100 & 0.06 & 0.064 & 0.011 \\
 & 200 & 0.068 & 0.06 & 0.001 \\
$T_{SKK}$--original & 400 & 0.071 & 0.072 & 0 \\
 & 800 & 0.089 & 0.089 & 0 \\
 & 1600 & 0.101 & 0.089 & 0 \\
\hline
 & 100 & 0.045 & 0.039 & 0.043 \\
 & 200 & 0.047 & 0.048 & 0.044 \\
$T_{SKK}$--permutation & 400 & 0.054 & 0.043 & 0.039 \\
 & 800 & 0.048 & 0.052 & 0.049 \\
 & 1600 & 0.042 & 0.054 & 0.051 \\
\hline
 & 100 & 0.077 & 0.071 & 0.137 \\
 & 200 & 0.075 & 0.078 & 0.148 \\
$T_{GCBL}$-original & 400 & 0.086 & 0.081 & 0.141 \\
 & 800 & 0.125 & 0.134 & 0.152 \\
 & 1600 & 0.164 & 0.152 & 0.185 \\
\hline
 & 100 & 0.042 & 0.048 & 0.046 \\
 & 200 & 0.051 & 0.042 & 0.044 \\
$T_{GCBL}$--permutation & 400 & 0.05 & 0.056 & 0.038 \\
 & 800 & 0.046 & 0.047 & 0.042 \\
 & 1600 & 0.055 & 0.047 & 0.039 \\
\hline
 & 100 & 0.082 & 0.075 & 0.076 \\
 & 200 & 0.101 & 0.114 & 0.093 \\
$T_{CLX}$--original & 400 & 0.136 & 0.147 & 0.105 \\
 & 800 & 0.167 & 0.184 & 0.131 \\
\hline
\end{tabular}
\end{table*}
Note that we could not implement the test based on $T_{CLX}$ using its permutation distribution because the test procedure uses a computationally intensive optimization. For the same reason, we could not implement this test for $d = 1600$ under any of the above models using the asymptotic distribution given in \cite{CLX14}. Recall that we have discussed the sizes of the tests based on $T_{WMW}$ and $T_{CQ}^{(2)}$ for the above models in detail in subsections \ref{2.1} and \ref{3.1}.

\begin{figure}[ht!]
\begin{center}
\includegraphics[width=5in,height=1.7in]{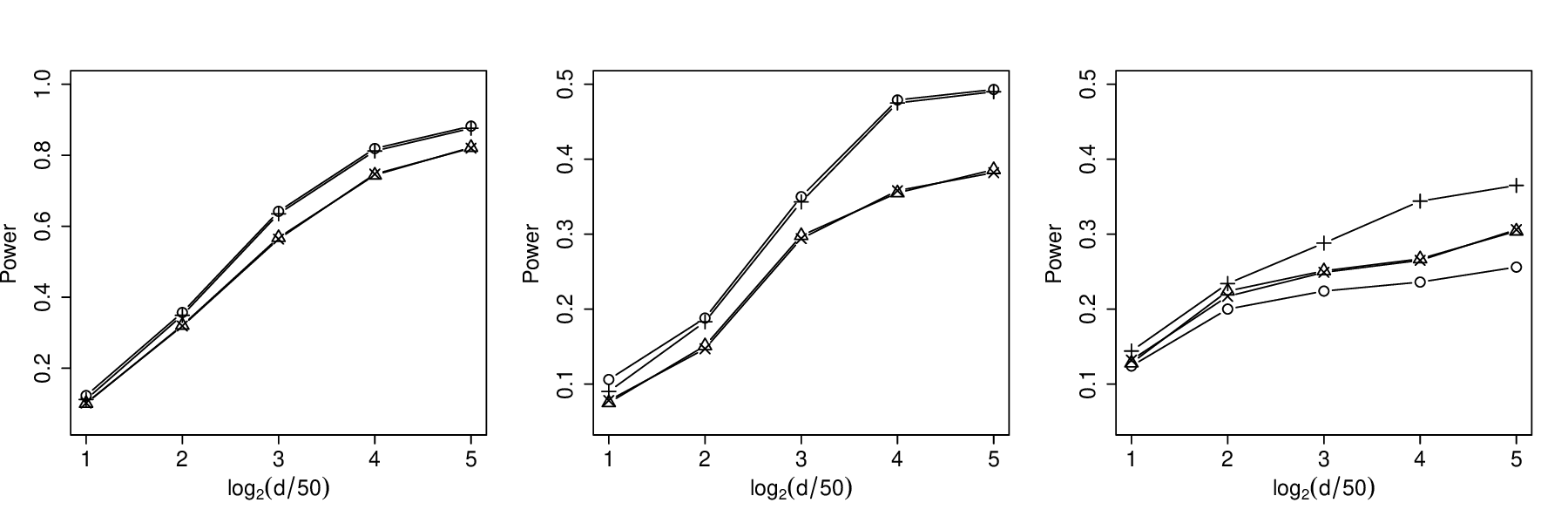}
\end{center}
\caption{Powers of the tests at nominal $5\%$ level based on $T_{WMW}$ (- + - curves), $T_{CQ}^{(2)}$ (- $\circ$ - curves), $T_{SKK}$ (- $\times$ - curves) and $T_{GCBL}$ (- $\triangle$ - curves) for the $AR(1)$ model with Gaussian innovation (left panel), the $AR(1)$ model with $t(5)$ innovation (middle panel) and the spherical $t(5)$ distribution (right panel). \label{f1}}
\end{figure}
\indent In Figure \ref{f1}, we give the plots of the empirical powers of the tests based on $T_{SKK}$ and $T_{GCBL}$, when they are implemented using their permutation distributions. Each plot in Figure \ref{f1} also includes the empirical powers of the tests based on $T_{WMW}$ and $T_{CQ}^{(2)}$. The power curves for these two tests are so close that they are overlaid on each other in the left and the middle plots. Similarly, the power curves corresponding to the tests based on $T_{SKK}$ and $T_{GCBL}$ are overlaid on each other in all the plots.

\begin{figure}[ht!]
\begin{center}
\includegraphics[width=5in,height=1.7in]{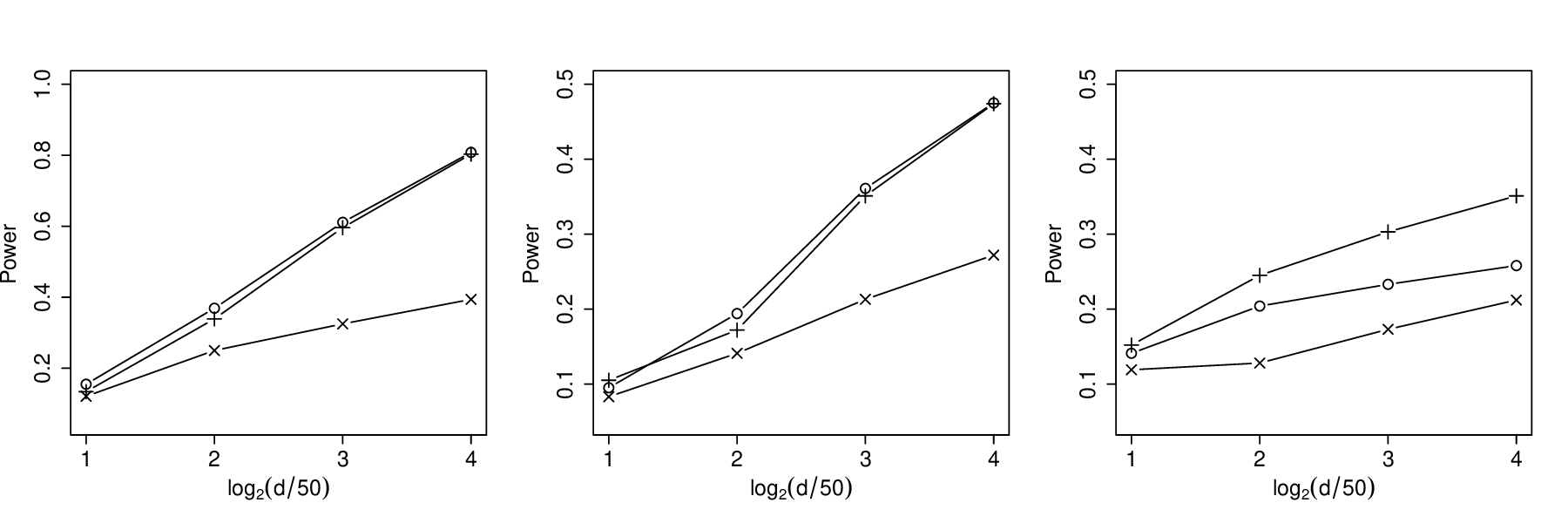}
\end{center}
\caption{Powers of the tests at nominal $5\%$ level based on $T_{WMW}$ (- + - curves), $T_{CQ}^{(2)}$ (- $\circ$ - curves) and $T_{CLX}$ (- $\times$ - curves) for the $AR(1)$ model with Gaussian innovation (left panel), the $AR(1)$ model with $t(5)$ innovation (middle panel) and the spherical $t(5)$ distribution (right panel). \label{f2}}
\end{figure}
\indent Figure \ref{f2} gives the plots of the empirical powers of the tests based on $T_{WMW}$, $T_{CQ}^{(2)}$ and $T_{CLX}$, when the mean shifts in the models considered in subsections \ref{2.1} and \ref{3.1} are distributed equally among all the coordinates. Once again, the power curves corresponding to the tests based on $T_{WMW}$ and $T_{CQ}^{(2)}$ are sufficiently close making the curves overlaid on each other in the left and the middle plots.

\indent In Figure \ref{f3}, we give the sizes and the powers of the tests based on $T_{WMW}$ and $T_{CQ}^{(2)}$ for the multivariate Gaussian distribution with dispersion matrix $(1-\beta)I_{d} + \beta{\bf 1}_{d}{\bf 1}_{d}^{'}$ with $\beta = 0.7$ considered in Section \ref{4}.
\begin{figure}[ht!]
\begin{center}
\includegraphics[width=4.8in,height=1.7in]{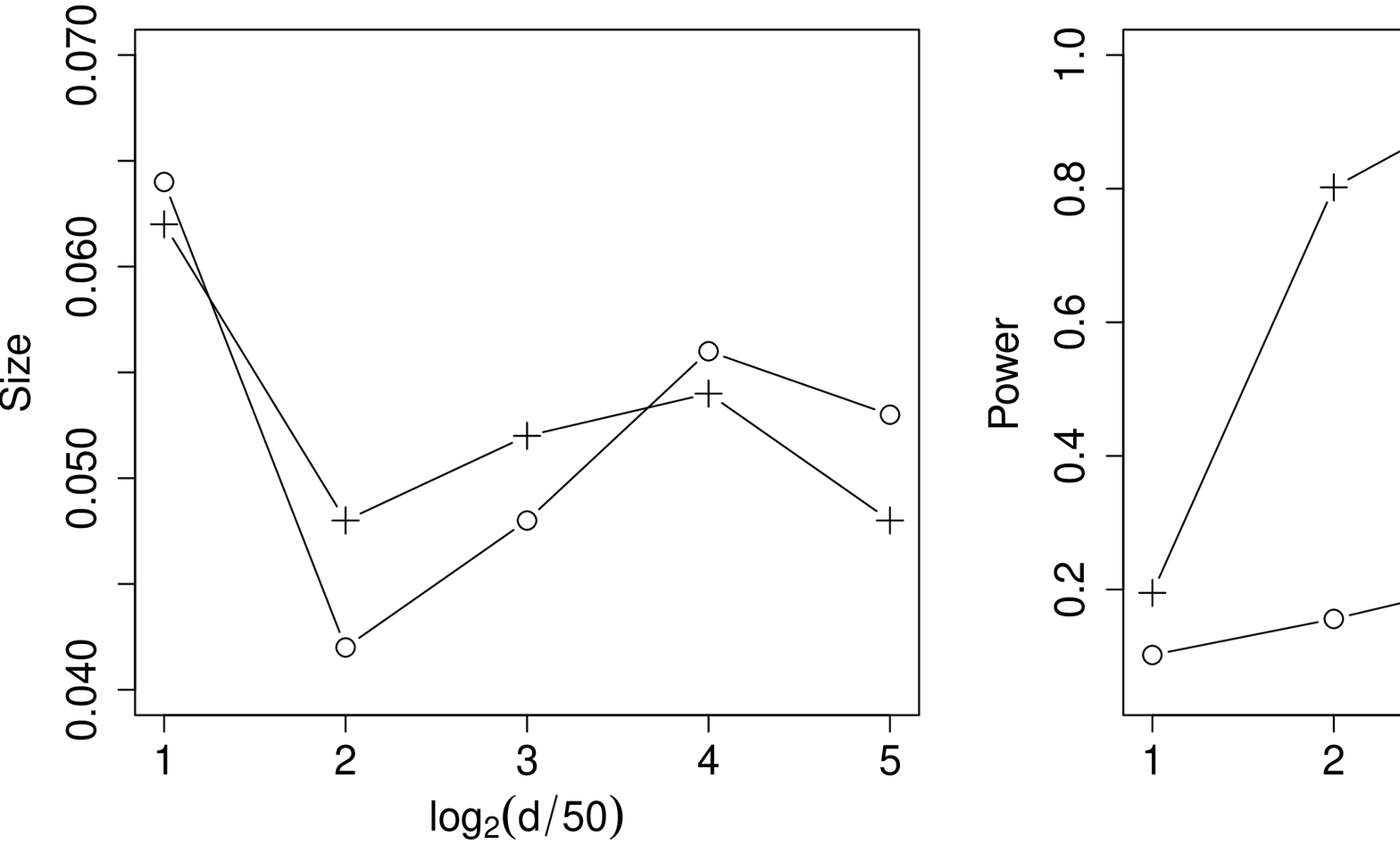}
\end{center}
\caption{Powers of the tests at nominal $5\%$ level based on $T_{WMW}$ (- + - curves) and $T_{CQ}^{(2)}$ (- $\circ$ - curves) for the multivariate Gaussian distribution with dispersion matrix $(1-\beta)I_{d} + \beta{\bf 1}_{d}{\bf 1}_{d}^{'}$ with $\beta = 0.7$. \label{f3}}
\end{figure}

\bibliographystyle{apalike.bst}
\bibliography{technical-arXiv.bib}
\end{document}